  \pgfplotsset{compat=newest}
\pgfplotsset{every axis/.append style={
        scaled ticks = false, 
        tick label style={/pgf/number format/fixed}
    }
}
\newlength\figureheight 
\newlength\figurewidth 
\begin{document}

\title{A $P$-Adaptive Hermite Method for Nonlinear Dispersive Maxwell's Equations
}

\titlerunning{A $P$-Adaptive Hermite Method for Nonlinear Dispersive Maxwell's Equations}        

\author{Yann-Meing Law \and Zhichao Peng \and Daniel Appel\"{o} \and Thomas Hagstrom
}


\institute{Y.-M. Law \at
              Department of Mathematics and Industrial Engineering, Polytechnique Montr\'eal,  Montr\'eal, QC, Canada, 
              \email{yann-meing.law@polymtl.ca} \\
              Z. Peng \at 
              Department of Mathematics, Hong Kong University of Science and Technology, Hong Kong, China,
              \email{pengzhic@ust.hk} \\
              D. Appel\"{o} \at 
              Department of Mathematics, Virginia Tech, Blacksburg VA, USA,
              \email{appelo@vt.edu} \\
              T. Hagstrom \at 
              Department of Mathematics, Southern Methodist University, Dallas TX, USA,
              \email{thagstrom@smu.edu} 
}

\date{Received: date / Accepted: date}

\maketitle

\begin{abstract}

In this work, 
we introduce a novel Hermite method to handle Maxwell's equations for nonlinear dispersive media.
The proposed method achieves high-order accuracy and 
    is free of any nonlinear algebraic solver,
    requiring solving instead small local linear systems for which the dimension is independent of the order. 
The implementation of order adaptive algorithms is straightforward in this setting,
    making the resulting $p$-adaptive Hermite method appealing for the simulations of soliton-like wave propagation.

\keywords{Hermite method \and  Nonlinear dispersive media  \and  Maxwell's equations \and High order \and $p$-adaptivity}
 \subclass{35Q61 \and 65M70 \and 78M22} 
\end{abstract}

\section{Introduction}

When light propagates through nonlinear optical media, 
    such as a Kerr-type medium, 
    the response of the medium depends on the intensity of the light in a nonlinear manner,
    giving rise to various intriguing optical phenomena such as third-order harmonic generation,
    four-wave mixing,
    and soliton propagation \cite{Agrawal2013}. 
In this situation,
    the linear and nonlinear electromagnetic responses of the medium can be modeled by Maxwell's equations with a nonlinear constitutive law relating the electric flux density to the electric field. 

Various numerical methods have been developed to simulate light propagation in optical 
    media with a nonlinear Kerr-type response.  
Finite-difference time-domain (FDTD) \cite{hile1996numerical,sorensen2005kink,greene2006general,jia2019new}, 
    finite-element time-domain (FETD) \cite{zhu2015novel,abraham2018convolution} 
    and energy stable mixed FE \cite{anees2020energy,huang2021time} methods 
    have been developed for Kerr media with or without a linear Lorentz response.
It is worth noting that FDTD \cite{Gilles2000}, 
    FETD \cite{abraham2019perfectly,li2023energy}
    and energy stable discontinuous Galerkin  \cite{bokil2017energy,Lyu2021,Lyu2022} methods are also able to treat nonlinear Raman scattering. 
Finally, 
    numerical methods for Kerr-Debye media 
    have been considered in \cite{ziolkowski1993full,fisher2007efficient,huang2017second,peng2020asymptotic}.
In this work, we explore another avenue based on the Hermite method.
    
The Hermite method was introduced in \cite{Goodrich2005} for 
    linear hyperbolic problems with periodic boundary conditions.
The method is composed of a Hermite interpolation procedure in space and 
    a Taylor method in time to evolve the polynomial coefficients.
It achieves a $2m+1$ convergence rate when evolving, 
    in time, 
    the numerical solution 
    and its spatial derivatives through order $m$.
Remarkably, 
    the stability condition of the Hermite method is determined only
	by the largest wave speed of the problem and is independent of the order.
Moreover,  
    the implementation of order adaptive algorithms in space, 
	referred to as $p$-adaptivity, 
    is straightforward within the Hermite method framework \cite{Chen2012}. 
Note that dissipative and conservative Hermite methods for Maxwell's equations in linear dispersive media described by generalized Lorentz models were designed in \cite{Hagstrom2019,Appelo2024}.

Here we propose a $p$-adaptive Hermite method for the 
    simulation of light propagation 
    in nonlinear optical media governed by Maxwell's equations with nonlinear Kerr effect, 
    nonlinear Raman scattering and the linear Lorentz response \cite{Agrawal2013}.
One approach to model these dispersion effects is to utilize a set of ordinary differential equations, 
    called auxiliary differential equations (ADEs) \cite{Joseph1991}.
Remarkably, for this type of nonlinear optical media, 
    the proposed method is devoid of nonlinear solvers. 

The paper is organized as follows.
We first describe the governing equations for which we seek a numerical solution in Section~\ref{sec:problem_definition}.
The Hermite method for nonlinear dispersive media in one space dimension is described in detail in Section~\ref{sec:Hermite_method_1D}.
We extend the proposed method in higher space dimensions in Section~\ref{sec:Hermite_method_multi_dimensions} and describe a $p$-adaptive algorithm,
	introduced in \cite{Chen2012}, 
	particularly adapted to the Hermite method framework, in Section~\ref{sec:p_adaptive_algo}.
The properties of the Hermite method for nonlinear dispersive media
	are discussed in Section~\ref{sec:analysis_method}.
Finally, 
	in Section~\ref{sec:numerical_examples}, 
	we demonstrate the performance of the proposed method through a series of numerical examples in 1-D and 2-D. 
	
\section{Problem Definition} \label{sec:problem_definition}

We seek numerical solutions to Maxwell's equations 
\begin{equation} \label{eq:Maxwell_3D}
	\begin{aligned}
		\mu \partial_t \hat{\mathbold{H}} + \nabla\times\hat{\mathbold{E}} =&\,\, 0, \\
		\partial_t \hat{\mathbold{D}} - \nabla\times \hat{\mathbold{H}} = &\,\, 0, 
	\end{aligned}
\end{equation}
	in a domain $\Omega \subset \mathbb{R}^d$ $(d=1,2,3)$ and a time interval $I=[t_0,t_f]$.
Here $\hat{\mathbold{H}}$ is the magnetic field, 
	$\hat{\mathbold{E}}$ is the electric field,
	$\hat{\mathbold{D}}$ is the electric flux density, 
    and $\mu$ is the magnetic permeability of free space.
	
For Kerr-Lorentz-Raman type nonlinear optical media,
	the constitutive law that relates the electric flux density and the electric field is given by 
\begin{equation} \label{eq:constitutive_law_DEPQ_3D}
	\hat{\mathbold{D}}  = \epsilon(\epsilon_\infty \hat{\mathbold{E}}+\hat{\mathbold{P}}+a(1-\theta)\|\hat{\mathbold{E}}\|^2\hat{\mathbold{E}}+a\theta \hat{Q}\hat{\mathbold{E}}).
\end{equation}
Here $\| \cdot\|$ denotes the Euclidean norm,
	$\epsilon$ is the electric permittivity of free space, 
    and $\epsilon_\infty$ is the relative electric permittivity when the frequency goes to infinity.
	
The constitutive law \eqref{eq:constitutive_law_DEPQ_3D} can model linear and nonlinear responses of the media to the electromagnetic fields.
The term $\epsilon\epsilon_\infty\hat{\mathbold{E}}$ describes its linear instantaneous response  
	while $\epsilon\hat{\mathbold{P}}$ describes its delayed linear response modeled by the Lorentz effect:
\begin{equation} \label{eq:second_order_ODE_P_3D}
	\partial_t^2 \hat{\mathbold{P}} + \gamma \partial_t \hat{\mathbold{P}} + \omega_0^2 \hat{\mathbold{P}} = \omega_p^2 \hat{\mathbold{E}}.
\end{equation}
Here $\omega_0$ is the resonance frequency of the medium, 
	$\omega_p = \sqrt{\epsilon_s-\epsilon_\infty}\omega_0$ is the plasma frequency,
	$\epsilon_s$ is the zero frequency relative permittivity, 
    and $\gamma$ is a damping constant.   
	
We consider a two-component nonlinear response of the media to the electromagnetic fields. 
The instantaneous response, 
	called Kerr effect, 
	is described by $\epsilon a(1-\theta)\|\hat{\mathbold{E}}\|^2\hat{\mathbold{E}}$ 
	and the delayed Raman vibrational molecular response is described by $\epsilon a\theta \hat{Q}\hat{\mathbold{E}}$.
Here $a$ is a third-order coupling constant and $\theta$ is the relative strength between the Kerr and Raman effects.
The natural molecular vibration within the material is modeled by 
\begin{equation} \label{eq:second_order_ODE_Q_3D}
	\partial_t^2 Q + \gamma_v\partial_t Q+\omega_v^2Q = \omega_v^2 \|\hat{\mathbold{E}}\|^2,
\end{equation}
	where $\omega_v$ is the resonance vibration frequency and $\gamma_v$ is a damping constant. 

By introducing the auxiliary variables $\hat{\mathbold{J}}$ and 
    $\hat{\mathbold{S}}$, 
    we rewrite the equations \eqref{eq:second_order_ODE_P_3D} and \eqref{eq:second_order_ODE_Q_3D} as a first-order system of 
    ordinary differential equations (ODEs)
\begin{equation} \label{eq:first_order_ODEs_3D}
	\begin{aligned}
		\partial_t \hat{\mathbold{P}}  =&\,\, \hat{\mathbold{J}}, \\
		\partial_t \hat{\mathbold{J}} + \gamma\hat{\mathbold{J}} + \omega_0^2 \hat{\mathbold{P}}  =&\,\,  \omega_p^2 \hat{\mathbold{E}}, \\
		\partial_t \hat{Q} =&\,\, \hat{S}, \\
		\partial_t \hat{S} + \gamma_v\hat{S}+\omega_v^2\hat{Q} =&\,\, \omega_v^2\|\hat{\mathbold{E}}\|^2.
	\end{aligned}
\end{equation}
The initial conditions are given by $\hat{\mathbold{H}}(\mathbold{x},t_0) = \hat{\mathbold{H}}_0(\mathbold{x})$,  
	$\hat{\mathbold{E}}(\mathbold{x},t_0) = \hat{\mathbold{E}}_0(\mathbold{x})$,
	$\hat{\mathbold{P}}(\mathbold{x},t_0) = \hat{\mathbold{P}}_0(\mathbold{x})$,
	$\hat{\mathbold{J}}(\mathbold{x},t_0) = \hat{\mathbold{J}}_0(\mathbold{x})$,
	$\hat{Q}(\mathbold{x},t_0) = \hat{Q}_0(\mathbold{x})$ and 
	$\hat{S}(\mathbold{x},t_0) = \hat{S}_0(\mathbold{x})$ in $\Omega$.

As in \cite{Lyu2021,Lyu2022}, 
	the energy of Maxwell's equations for nonlinear dispersive media is defined as 
\begin{equation}
	\begin{aligned}
	\mathcal{E} = \int_{\Omega} \left( \frac{\mu}{2} \|\hat{\mathbold{H}}\|^2 + \frac{\epsilon\epsilon_\infty}{2}\|\hat{\mathbold{E}}\|^2 + \frac{\epsilon}{2\omega_p^2}\|\hat{\mathbold{J}}\|^2 + \frac{\epsilon\omega_0^2}{2\omega_p^2}\|\hat{\mathbold{P}}\|^2 + \frac{a\epsilon\theta}{4\omega_p^2}\hat{\sigma}^2  \right. & &\\ \left. 
	+ \frac{a\epsilon\theta}{2}\hat{Q}\|\hat{\mathbold{E}}\|^2 + \frac{3a\epsilon(1-\theta)}{4}\|\hat{\mathbold{E}}\|^4 + \frac{a\epsilon\theta}{4}\hat{Q}^2 \right) d\Omega. & &
	\end{aligned}
\end{equation}
Note that the energy decreases with time 
    when periodic boundary conditions are considered 
    and is positive for $\theta \in [0,3/4]$.
Furthermore, 
	the energy is conserved when the damping terms in equations \eqref{eq:second_order_ODE_P_3D} and \eqref{eq:second_order_ODE_Q_3D} are neglected,
	that is $\gamma=\gamma_v=0$. 
	
\section{Hermite Method in One Space Dimension} \label{sec:Hermite_method_1D}

In this section, 
	we focus on a one-dimensional simplification of the problem defined in Section~\ref{sec:problem_definition},
	given by 
\begin{subequations} \label{eq:1D_equations}
	\begin{align}
		\mu \partial_t \hat{H} - \partial_x \hat{E} =&\,\, 0, \label{eq:Faraday_1D}\\
		\partial_t \hat{D} - \partial_x \hat{H} = &\,\, 0, \label{eq:Ampere_Maxwell_1D}\\
		\partial_t \hat{P} - \hat{J} =&\,\, 0, \label{eq:P_ODE_1D}\\
		\partial_t \hat{J} + \gamma\hat{J} + \omega_0^2 \hat{P} - \omega_p^2 \hat{E} =&\,\, 0, \label{eq:J_ODE_1D}\\
		\partial_t \hat{Q} - \hat{S} =&\,\, 0, \label{eq:Q_ODE_1D}\\
		\partial_t \hat{S} + \gamma_v\hat{S}+\omega_v^2(\hat{Q}-\hat{E}^2) =&\,\, 0, \label{eq:S_ODE_1D}\\
		\hat{D} -\epsilon(\epsilon_\infty \hat{E}+\hat{P}+\theta_K\hat{E}^3+\theta_R \hat{Q}\hat{E}) =&\,\, 0, \label{eq:D_nonlinear_1D}
	\end{align}
\end{subequations}	
	in a domain $\Omega = [x_\ell, x_r]$ and a time interval $I=[t_0,t_f]$.
Here $\theta_K = a(1-\theta)$ and $\theta_R= a\theta$.
We consider periodic boundary conditions.

The Hermite method requires a mesh staggered in space and time. 
The primal mesh is defined as
\begin{equation}
	x_i = x_\ell + i\Delta x, \quad i = 0,\dots,N_x, \quad \Delta x = \frac{x_r-x_\ell}{N_x}.
\end{equation}
The numerical solution on the primal mesh is centered at times
\begin{equation}
	t_n = t_0 + n\Delta t, \quad n=0,\dots,N_t, \quad \Delta t = \frac{t_f-t_0}{N_t}.
\end{equation}
Here $N_x$ is the number of cells and $N_t$ is the number of time steps to reach the final time $t_f$.
The dual mesh nodes are defined at the cell centers
\begin{equation}
	x_{i+1/2} = x_\ell + (i+1/2)\Delta x, \quad i=0,\dots,N_x-1,
\end{equation} 
	and times
\begin{equation}
	t_{n+1/2} = t_0 + (n+1/2)\Delta t, \quad n=0,\dots,N_t-1.
\end{equation}

The Hermite method is composed of three parts:
\begin{itemize}
	\item[1.] \underline{Hermite interpolation:}
	
	Let us assume sufficiently accurate approximations of the unknowns variables and their derivatives through order $m$ on the 
		primal mesh are known at time $t_n$.
	For each variable and each cell, 
		we compute the unique degree $2m+1$ Hermite interpolant matching the data for each field.
		For example we use $\partial_x^k \hat{H}(x_i,t_n)$, $\partial_x^k \hat{H}(x_{i+1},t_n)$
		$k=0,\dots,m$, to construct the Hermite interpolant approximating the magnetic field in  
		the cell $[x_i,x_{i+1}]$.
		 
	\item[2.] \underline{System of ordinary differential equations for the polynomial coefficients:}
	
	A system of ordinary differential equations for the polynomial coefficients is used to evolve all the unknown variables
		and their spatial derivatives through order $m$ on the dual mesh to time $t_{n+1/2}$.
	Considering a cell and a given Hermite interpolant of each variable on this cell,
		we identify the derivatives of the variable as scaled coefficients of the Hermite polynomial at the cell center.
	Noticing that the scaled coefficients are actually time-dependent functions and substituting the variables by their Hermite interpolant 
		in system \eqref{eq:1D_equations}, 
		we obtain a local system of ODEs for the coefficients on each cell.
	
	\item[3.] \underline{Time evolution:}
	
	For each cell, 
		we evolve locally the polynomial coefficients (and therefore the function value of the unknown variables and their spatial derivatives through order $m$) 
		to time $t_{n+1/2}$ by solving numerically the system of ODEs.
	Note that local sub-steps in time might be required to match the accuracy of the Hermite method in space. 
	
\end{itemize}

Figure~\ref{fig:hermite_method_1d} illustrates the different steps of the Hermite method for executing a complete time step, 
    that is evolving the data from $(x_{i+1},t_{n})$ to $(x_{i+1},t_{n+1})$.
We now provide more details on each part of the proposed Hermite method.
To do so, 
	we consider the evolution of the data for the first half time step,  
	from the initial time $t_0$ to $t_{1/2}$,
	as an example.
The subsequent evolutions in time of the numerical solution follows analogously. 
\begin{figure}   
	\centering
		\includegraphics[width=3.8in,trim={0.0cm 0cm 0cm 0cm},clip]{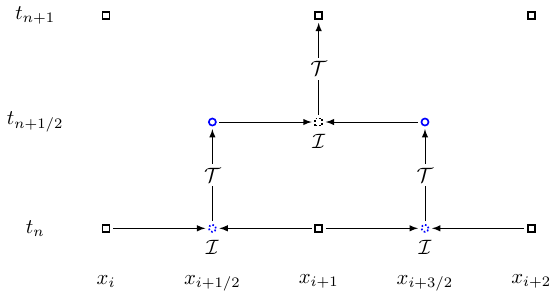} 
       \caption{Illustration of the Hermite method to evolve the data at the primal grid node $x_{i+1}$ from $t_{n}$ to $t_{n+1}$. Here the Hermite interpolation and time-evolution procedures are denoted respectively by $\mathcal{I}$ and $\mathcal{T}$.}
       \label{fig:hermite_method_1d}
\end{figure}
\begin{remark}
	In this work, 
		we focus on the Hermite method with periodic boundary conditions and therefore the enforcement of other types of boundary conditions will not be addressed. 
	As it is usually the case for Hermite methods,
		the treatment of boundary conditions is challenging because the variables and their spatial derivatives through order $m$ are also required on the boundary.
	That being said, 
		the boundary conditions could be handled by a discontinuous Galerkin (DG) method, 
		leading to a hybrid DG-Hermite method, 
		similar to what it is done in \cite{Chen2010} for linear Maxwell's equations.  
	For the Hermite method presented here, 
		DG methods proposed in \cite{Bokil2017,Lyu2021,Lyu2022} could be potential candidates and will be explored in future work. 
\end{remark}

Let us assume that the spatial derivatives through order $m$ of all variables are known at the initial 
	time $t_0$ on the primal mesh.
Note that, 
	if the spatial derivatives are too expensive to be computed explicitly,
	the initial solution can be projected on a polynomial space of degree at least $2m+1$ to 
	maintain accuracy. 
The derivatives sought at the initial time are then computed from the polynomials resulting from the projection.

\subsection{Hermite Interpolation}

For each variable and each cell $[x_i,x_{i+1}]$ on the primal mesh, 	
	we seek the $2m+1$ degree Hermite interpolant $f_i^{\mathcal{H}}(x)$ satisfying 
\begin{equation}
	\frac{d^{k} f_{i+1/2}^{\mathcal{H}}(x_i)}{d x^{k}} = \frac{d^{k} \hat{f}(x_i,t_0)}{d x^{k}}, \quad
	\frac{d^{k} f_{i+1/2}^{\mathcal{H}}(x_{i+1})}{d x^{k}} = \frac{d^{k} \hat{f}(x_{i+1},t_0)}{d x^{k}}, \quad
	 k = 0,\dots,m.
\end{equation}
Here $\hat{f}$ is a variable that we seek to approximate. 
We then obtain a polynomial approximating each variable on the cell $[x_i, x_{i+1}]$,
\begin{equation}
	\hat{f}(x,t)|_{t=t_0} \approx f_{i+1/2}^{\mathcal{H}}(x) = \sum_{k=0}^{2m+1} f_{k,i}(t)|_{t=t_0} \xi^k.
\end{equation}
Here $\xi=\frac{x-x_{i+1/2}}{\Delta x}$.
Note that the Hermite interpolants are centered at the cell center $x_{i+1/2}$ and the coefficients $f_{k,i}$ are time-dependent functions.

Identifying the spatial derivatives at the cell center as scaled coefficients of $f_{i+1/2}^{\mathcal{H}}(x)$, 
	we therefore seek a system of ODEs for the coefficients of the cell polynomial to evolve the data 
	on the dual mesh at time $t_{1/2}$.
For simplicity, 
	we omit the subscript $i$ associated with the cell in the remaining parts of this section.

\subsection{System of Ordinary Differential Equations for the Polynomial Coefficients} \label{sec:syst_ODEs_coeff}
 
We use a one-step time-stepping method instead of a Taylor method,
    as in the original Hermite method \cite{Goodrich2005}, 
    to evolve the cell polynomial coefficients.
This approach only requires a single time derivative 
	per polynomial coefficient and is therefore more efficient for nonlinear problems \cite{Hagstrom2007,Hagstrom2015}.
Note that the nonlinear problems considered in previous works involved 
    spatial derivative terms and were 
	of the form $\partial_t u + \partial_x f(u) = 0$,
    where $f(u)$ is a nonlinear function. 
The problem of interest here has the form 
    $\partial_t f(u) + \partial_x u = 0$,
	requiring therefore a novel formulation in the Hermite setting.
    
\subsubsection{System of ODEs for the Magnetic Field Coefficients}

We first substitute the electromagnetic fields by their Hermite interpolants $f^{\mathcal{H}}(x)$ in Faraday's law \eqref{eq:Faraday_1D} 
\begin{equation}
		\partial_t H^{\mathcal{H}} = \partial_x E^{\mathcal{H}}.
\end{equation}
Expanding the cell polynomials 
	we obtain
\begin{equation}
		\mu\sum_{k=0}^{2m+1} H'_k(t) \xi^k = \sum_{k=0}^{2m} \frac{(k+1)E_{k+1}(t)}{\Delta x} \xi^k. \\
\end{equation}
Here and in the following, 
    we use the Lagrange's notation for the time derivative instead of $\tfrac{d}{dt}\cdot$ for legibility.

By matching the coefficients of the polynomials in the right and left hand sides, 
	we obtain a system of ODEs to evolve the polynomial coefficients 
\begin{equation} \label{eq:ode_coeff_magnetic_field_part_1}
		 H'_k = \frac{(k+1)}{\mu\Delta x}E_{k+1}, \quad k = 0,\dots,2m.
\end{equation}
	with 
\begin{equation} \label{eq:ode_coeff_magnetic_field_part_2}
	H'_{2m+1} = 0.
\end{equation}

\subsubsection{Systems of ODEs for the Auxiliary Variables Coefficients}

Applying the same procedure as for the magnetic field to the ODEs for $\hat{P}$, $\hat{J}$ and $\hat{Q}$ in \eqref{eq:1D_equations} leads to 
\begin{equation} \label{eq:ode_coeff_auxiliary_P_J_Q}
	 P'_k = J_{k}, \quad  J'_k = -\gamma J_k - \omega_0^2 P_k + \omega_p^2 E_k, \quad Q'_k = S_{k},  
\end{equation}
for $k = 0,\dots,2m+1$.
The ODE \eqref{eq:S_ODE_1D} for the auxiliary variable $\hat{S}$ contains the nonlinear term $\hat{E}^2$. 
In this situation, 
	we compute the square of the electric field polynomial and truncate it to obtain 
	a polynomial of degree $2m+1$ approximating $\hat{E}^2$.
We then have  
\begin{equation}
	\hat{E}^2 \approx (E^{\mathcal{H}})^2 = \sum_{k=0}^{4m+2} \phi_k \xi^k = \sum_{k=0}^{2m+1}\phi_k \xi^k  + \mathcal{O}(\Delta x^{2m+2}), \quad \phi_k = \sum_{p=0}^{k} E_p E_{k-p}.
\end{equation}
The resulting ODEs for the coefficients of the Hermite interpolant $S^{\mathcal{H}}$ is 
\begin{equation} \label{eq:ode_coeff_auxiliary_S}
	  S'_k = -\gamma_v S_k-\omega_v^2Q_k+\omega_v^2\phi_k, \quad k = 0,\dots,2m+1.
\end{equation} 

\subsubsection{A Recursive System of ODEs for the Electric Field Coefficients}
Using equations \eqref{eq:P_ODE_1D} and \eqref{eq:D_nonlinear_1D},
    we rewrite Amp\`{e}re-Maxwell's law \eqref{eq:Ampere_Maxwell_1D} as
\begin{equation} \label{eq:AmpereMaxwell_1}
    \partial_t (\epsilon_\infty \hat{E}+\theta_K\hat{E}^3+\theta_R \hat{Q}\hat{E}) +  \hat{J} - \frac{\partial_x\hat{H}}{\epsilon} = 0.
\end{equation}
We first compute a degree $2m+1$ polynomial approximating the term $\hat{E}^3$ by truncating $(E^{\mathcal{H}})^3$ and obtain
\begin{equation} \label{eq:approx_term_E3}
	\hat{E}^3 \approx (E^{\mathcal{H}})^3 = \sum_{k=0}^{6m+3} \psi_k \xi^k = \sum_{k=0}^{2m+1}\psi_k \xi^k  + \mathcal{O}(\Delta x^{2m+2}), \quad \psi_k = \sum_{p=0}^{k}E_p\phi_{k-p}. 
\end{equation}
Substituting the variables by their interpolants and matching their coefficients lead to 
\begin{equation} \label{eq:coeff_AmpereMaxwell_1}
    \bigg(\epsilon_\infty E_k+\theta_K\psi_k+\theta_R \sum_{p=0}^k E_p Q_{k-p} \bigg)' +  J_k - \frac{(k+1)}{\epsilon\Delta x}H_{k+1} = 0, 
\end{equation}
    for $k = 0, \dots, 2m$, 
    and 
\begin{equation} \label{eq:coeff_AmpereMaxwell_2}
     \bigg(\epsilon_\infty E_{2m+1}+\theta_K\psi_{2m+1}+\theta_R \sum_{p=0}^{2m+1} E_p Q_{2m+1-p}\bigg)' + J_{2m+1} = 0.
\end{equation}
Since the coefficients are time-dependent functions, 
    we have 
\begin{equation} 
    \epsilon_\infty E'_k+\theta_K \psi'_k+\theta_R  \bigg(\sum_{p=0}^k E_p Q_{k-p}\bigg)' +  J_k - \frac{(k+1)}{\epsilon\Delta x}H_{k+1} = 0.
\end{equation}
We now seek to isolate $E_k'$.
Computing the time derivative of $\psi_k$ leads to
\begin{equation}
    \begin{aligned}
\psi'_k =&\,\,  \bigg(\sum_{p=0}^k E_p \phi_{k-p}\bigg)'\\
    =&\,\,  \sum_{p=0}^k (E'_p \phi_{k-p} + E_p \phi'_{k-p}) \\
    =&\,\, E'_k \phi_0 + \sum_{p=0}^{k-1} E'_p \phi_{k-p} +  \sum_{p=0}^{k} \phi'_{p} E_{k-p} \\
    =&\,\, E'_k \phi_0 + \phi'_k E_0 + \sum_{p=0}^{k-1}(E'_p \phi_{k-p}+\phi'_p E_{k-p}).
    \end{aligned}
\end{equation}
Knowing that 
\begin{equation}
     \phi'_k = \bigg(\sum_{p=0}^k E_pE_{k-p}\bigg)' 
        = 2\sum_{p=0}^k E'_p E_{k-p} 
        = 2 E'_k E_0 + 2 \sum_{p=0}^{k-1} E'_p E_{k-p},
\end{equation}
    and using $\phi_0 = E_0^2$,
    we obtain 
\begin{equation} \label{eq:dt_psi_1D}
    \begin{aligned}
    \psi'_k =&\,\, E'_k \phi_0 +  2 E'_k E_0^2 + \sum_{p=0}^{k-1}(E'_p \phi_{k-p}+ (\phi'_p +2 E'_pE_0) E_{k-p} ) \\
    =&\,\, 3  E'_k \phi_0 + \sum_{p=0}^{k-1}(E'_p \phi_{k-p}+ (\phi'_p +2E'_pE_0) E_{k-p} ).
    \end{aligned}
\end{equation}
Note that the terms in the summation only involve $E'_p$, $p=0,1,\dots,k-1$.

Computing the time derivative involving $E_pQ_{k-p}$ and $Q'_k = S_k$ leads to 
\begin{equation}  \label{eq:dt_sum_EpQkmp_1D}
    \bigg(\sum_{p=0}^k E_pQ_{k-p}\bigg)' = \sum_{p=0}^k ( E'_p Q_{k-p} + E_pQ'_{k-p}) 
    = E'_k Q_0 + \sum_{p=0}^{k-1} E'_p Q_{k-p} + \sigma_k. 
\end{equation}
Here 
\begin{equation}
	E^{\mathcal{H}}S^{\mathcal{H}} = \sum_{k=0}^{2m+1} \sigma_k \xi^k + \mathcal{O}(\Delta x^{2m+2}), \quad \sigma_k = \sum_{p=0}^k E_pS_{k-p}.
\end{equation}

Substituting \eqref{eq:dt_psi_1D} and \eqref{eq:dt_sum_EpQkmp_1D} into \eqref{eq:coeff_AmpereMaxwell_1} and \eqref{eq:coeff_AmpereMaxwell_2} gives 
\begin{equation} \label{eq:ode_coeff_E_1}
    M E'_k  =  \frac{(k+1)}{\epsilon\Delta x}H_{k+1}-  J_k -\theta_R\sigma_k- \sum_{p=0}^{k-1}\big(\theta_K(E'_p \phi_{k-p}  
    + (\phi'_p +2 E'_pE_0) E_{k-p}) + \theta_R E'_p Q_{k-p}\big),
\end{equation}
    for $k=0,\dots,2m$, 
    and 
\begin{equation} \label{eq:ode_coeff_E_2}
    M E'_k  =  -J_k -\theta_R\sigma_k  - \sum_{p=0}^{k-1}\big(\theta_K(E'_p \phi_{k-p} 
    + (\partial_t\phi_p +2 E'_pE_0) E_{k-p}) + \theta_R E'_p Q_{k-p}\big), 
\end{equation}
    when $k=2m+1$.
Here 
\begin{equation} \label{eq:matrix_M_1D}
	M = \epsilon_\infty+3\theta_K\phi_0 + \theta_R Q_0.
\end{equation}

\subsubsection{The Complete Set of ODEs for the Polynomial Coefficients}

In summary, 
    given the Hermite polynomial approximations of the unknown variables,
    the complete system of ODEs for the polynomial coefficients is given by 
\begin{equation} \label{eq:complete_syst_ODE_1d}
    \left\{ 
          \begin{aligned}
              &\,\, H'_k = \frac{(k+1)}{\mu\Delta x}E_{k+1}, \quad k = 0,\dots,2m, \\
              &\,\, H'_{2m+1} = 0, \\
              &\,\, P'_k = J_{k}, \quad k = 0,\dots,2m+1, \\
              &\,\, J'_k = -\gamma J_k - \omega_0^2 P_k + \omega_p^2 E_k, \quad k = 0,\dots,2m+1, \\
              &\,\, Q'_k = S_{k},\quad k = 0,\dots,2m+1, \\ 
              &\,\, S'_k = -\gamma_v S_k-\omega_v^2Q_k+\omega_v^2\phi_k, \quad k = 0,\dots,2m+1, \\
              &\,\, M E'_k = \frac{(k+1)}{\epsilon\Delta x}H_{k+1}-  J_k -\theta_R\sigma_k - \sum_{p=0}^{k-1}\big(\theta_K(E'_p \phi_{k-p} \\ 
    &\,\, \qquad + (\phi'_p +2E'_pE_0) E_{k-p}) + \theta_R E'_p Q_{k-p}\big), \quad k = 0,\dots,2m, \\
    &\,\, ME'_{2m+1} = -J_{2m+1} -\theta_R\sigma_{2m+1}  - \sum_{p=0}^{2m}\big(\theta_K(E'_p \phi_{2m+1-p}  \\ 
    &\,\, \qquad+ (\phi'_p +2E'_pE_0) E_{2m+1-p}) + \theta_R E'_p Q_{2m+1-p}\big),
          \end{aligned} \right. 
\end{equation}
with 
\begin{equation}
    \phi_k =\sum_{p=0}^{k} E_p E_{k-p}, \quad 
    \phi'_k = 2\sum_{p=0}^k E'_p E_{k-p} \quad \mbox{and} \quad M = \epsilon_\infty+3\theta_K\phi_0 + \theta_R Q_0.
\end{equation}
Note that we can solve for the time derivatives recursively in the system of ODEs \eqref{eq:complete_syst_ODE_1d}, 
    thereby avoiding the need for nonlinear solvers. 

\subsection{Time Evolution} \label{sec:time_evolution_1D}

We now seek a numerical solution of the unknown variables and their derivatives through order $m$ on the dual mesh at time $t_{1/2}$.
To do so, 
	we solve numerically the system of ODEs for the polynomial coefficients using explicit Runge-Kutta (RK) methods.
Here we choose a fifth-order Dormand-Prince RK method with a local fixed number of time sub-steps \cite{Hairer1993}, but any other single step method could be employed.
In the following,
	we briefly describe this method in the Hermite setting.

We first rewrite the system of ODEs for polynomial coefficients as
\begin{equation} \label{eq:ODE_system}
	\mathbold{c}' = \mathbold{F}(t,\mathbold{c}).
\end{equation}
Here the vector $\mathbold{c}$ contains all the polynomial coefficients. 
From the Hermite interpolation procedure, 
we know the initial coefficients $\mathbold{c}$ at time $t_0$. 

To evolve the polynomial coefficients from $t_0$ to $t_{1/2}$, 
	we use local time sub-steps on each cell. 
For a fixed number of sub-steps $N_s$ to reach $t_{1/2}$, 
	we define the times
\begin{equation}
	\tau_s = t_0 + s\Delta \tau, \quad s=0,\dots,N_\tau,  \quad	\Delta \tau = \frac{t_{1/2}-t_0}{N_{\tau}}=\frac{\Delta t}{2N_{\tau}}.
\end{equation}
The approximations of the function value of the unknown variables and their derivatives through order $m$ are identified as scaled coefficients on the dual mesh.

To complete the time step, 
	we repeat the same procedure but this time from the dual mesh at time $t_{1/2}$ to the primal mesh at $t_1$.

\begin{remark}
The Hermite interpolation procedure and the time evolution of the coefficients on each cell can be done locally, 
making it suitable for GPU implementation \cite{Vargas2017,Appelo2018,Vargas2019}.
\end{remark}

\section{Extension to Multi-Dimensions} \label{sec:Hermite_method_multi_dimensions} 

In this section, 
	we extend the proposed Hermite method to higher dimensions.
The Hermite interpolation procedure is easily generalized in multi-dimensions 
	using tensor products of one space dimensional Hermite interpolants and we refer the reader to \cite{Goodrich2005} for more detail. 
The time evolution of the polynomial coefficients is performed using the same approach described in subsection \ref{sec:time_evolution_1D} 
	for the one dimensional case.
Here we focus on the system of ODEs to evolve in time the cell polynomial coefficients. 
The two dimensional case is described in detail.
We also provide the recursive system of ODEs for the electric field coefficients in the three dimensional case, 
	the derivation of the ODEs for the remaining coefficients should be straightforward. 
	
\subsection{Two Dimensional Case}

We now seek a numerical solution of the simplification of Maxwell's equations in 2-D  
\begin{equation} \label{eq:PDE_system_2D}
	\begin{aligned}
		\partial_t \hat{D}_x - \partial_y \hat{H}_z = &\,\, 0, \\
		\partial_t \hat{D}_y + \partial_x \hat{H}_z = &\,\, 0, \\
		\mu \partial_t \hat{H}_z - \partial_y \hat{E}_x + \partial_x \hat{E}_y =&\,\, 0, 
	\end{aligned}
\end{equation} 
	in a domain $\Omega = [x_\ell, x_r]\times[y_b,y_t]$ and a time interval $I=[t_0,t_f]$.
The system of ODEs \eqref{eq:first_order_ODEs_3D} is simplified to
\begin{equation} \label{eq:ODE_system_2D}
	\begin{aligned}
		&\partial_t \hat{P}_x - \hat{J}_x =  0, \quad \partial_t \hat{J}_x + \gamma \hat{J}_x + \omega_0^2 \hat{P}_x - \omega_p^2 \hat{E}_x = 0,\\
		&\partial_t \hat{P}_y - \hat{J}_y =  0, \quad \partial_t \hat{J}_y + \gamma \hat{J}_y + \omega_0^2 \hat{P}_y - \omega_p^2 \hat{E}_y = 0,\\
		&\partial_t \hat{Q} - \hat{S} = 0, \quad \partial_t \hat{S} + \gamma_v\hat{S}+\omega_v^2(\hat{Q}-\hat{E}_x^2-\hat{E}_y^2) = 0, 
	\end{aligned}
\end{equation}	
	and the nonlinear constitutive law \eqref{eq:constitutive_law_DEPQ_3D} becomes  
\begin{equation} \label{eq:nonlinear_constitutive_law_2D}
	\begin{aligned}
		\hat{D}_x -\epsilon(\epsilon_\infty \hat{E}_x+\hat{P}_x+\theta_K\hat{E}_x(\hat{E}_x^2+\hat{E}_y^2)+\theta_R \hat{Q}\hat{E}_x) =&\,\, 0, \\
		\hat{D}_y -\epsilon(\epsilon_\infty \hat{E}_y+\hat{P}_y+\theta_K\hat{E}_y(\hat{E}_x^2+\hat{E}_y^2)+\theta_R \hat{Q}\hat{E}_y) =&\,\, 0.
	\end{aligned}
\end{equation}

As in the one dimensional case, 
	the mesh is staggered in space and time. 
We focus on the spatial component of the mesh since the time discretization remains the same in higher dimensions.
The primal mesh is defined as
\begin{equation}
	(x_i,y_j) = (x_\ell+i\Delta x,y_b+j\Delta y), \quad i=0,\dots, N_x, \quad j=0,\dots,N_y.
\end{equation}
Here $N_x$ and $N_y$ are the number of cells in the $x$ and $y$ directions, 
	$\Delta x = \frac{x_r-x_\ell}{N_x}$ and $\Delta y = \frac{y_t-y_b}{N_y}$.
The dual mesh is given by 
\begin{equation}
	(x_{i+1/2},y_{j+1/2}) = (x_\ell+(i+1/2)\Delta x,y_b+(j+1/2)\Delta y), 
\end{equation}
	for $i = 0,\dots,N_x-1$, 
	$j=0,\dots,N_y-1$,
	which corresponds to the cell centers of the primal mesh. 

\subsubsection{System of ODEs for the Polynomial Coefficients in Two Dimensions} \label{sec:syst_ODEs_coeff_2D}

We write the Hermite interpolants of all variables in the form 
\begin{equation} \label{eq:Hermite_interpolant_2D}
	f^{\mathcal{H}}_{i+1/2,j+1/2}(x,y) = \sum_{k=0}^{2m+1}\sum_{\ell=0}^{2m+1} f_{k,\ell} \xi^k \eta^\ell.
\end{equation}
Here $\xi = \frac{x-x_{i+1/2}}{\Delta x}$ and $\eta = \frac{y-y_{j+1/2}}{\Delta y}$.
For simplicity, 
	we omit the subscripts $i$ and $j$ associated with the cell in the remaining part of this section.
	
The system of ODEs for the polynomial coefficients is obtained by substituting the variables by polynomials 
	in systems \eqref{eq:PDE_system_2D} and \eqref{eq:ODE_system_2D}, 
	and matching the coefficients, 
	as in subsection \ref{sec:syst_ODEs_coeff} for the one dimensional case.
We then have
\begin{equation}
    \begin{aligned}
    &\left\{ 
          \begin{aligned}
              &\,\, H'_{z_{k,\ell}} = \frac{(\ell+1)}{\mu \Delta y} E_{x_{k,\ell+1}} -\frac{(k+1)}{\mu \Delta x} E_{y_{k+1,\ell}},  \quad k,\ell = 0, \dots, 2m, \\[1pt] 
               &\,\, H'_{z_{2m+1,\ell}} = \frac{(\ell+1)}{\mu\Delta y} E_{x_{2m+1,\ell+1}}, \quad \ell = 0,\dots, 2m,\\[1pt]
               &\,\, H'_{z_{k,2m+1}} = -\frac{(k+1)}{\mu\Delta x} E_{y_{k+1,2m+1}}, \quad k = 0,\dots, 2m,\\[1pt]
               &\,\, H'_{z_{2m+1,2m+1}} = 0,\\
          \end{aligned} \right. 
    \end{aligned}
\end{equation}
	for the linear PDE part, 
	and 
\begin{equation}
    \begin{aligned}
    & P'_{x_{k,\ell}} = J_{x_{k,\ell}}, \quad P'_{y_{k,\ell}} = J_{y_{k,\ell}}, \quad J'_{x_{k,\ell}} = \omega_p^2 E_{x_{k,l}} - \gamma J_{x_{k,\ell}}- \omega_0^2P_{x_{k,\ell}}, \\
    & J'_{y_{k,\ell}} = \omega_p^2 E_{y_{k,l}} - \gamma J_{y_{k,\ell}}- \omega_0^2P_{y_{k,\ell}}, \quad Q'_{k,\ell} = S_{k,\ell}, 
    \end{aligned}
\end{equation}
	for $k,\ell=0, \dots, 2m+1$,
	for the linear ODE part.
The nonlinear ODEs to evolve $\hat{S}$ involves the computation of $(E_x^{\mathcal{H}})^2$ and $(E_y^{\mathcal{H}})^2$.
As in 1-D, 
	we truncate the resulting polynomial to 
\begin{equation}
(E_{\beta}^{\mathcal{H}})^2 \approx \sum_{k=0}^{2m+1}\sum_{\ell=0}^{2m+1} \phi_{\beta_{k,\ell}} \xi^k\eta^\ell, \quad \phi_{\beta_{k,\ell}} = \sum_{i=0}^k\sum_{j=0}^{\ell} E_{\beta_{i,j}}E_{\beta_{k-i,\ell-j}}.
\end{equation}
Here $\beta$ is either $x$ or $y$.
We then have 
\begin{equation}
	S'_{k,\ell} = \omega_v^2 (\phi_{x_{k,\ell}} + \phi_{y_{k,\ell}} - Q_{k,\ell}) - \gamma_v S_{k,\ell}, \quad k,\ell = 0,\dots, 2m+1. 
\end{equation}

For the electric field coefficients, 
	we use the constitutive law equations \eqref{eq:nonlinear_constitutive_law_2D} and 
	the ODEs for $\hat{P}_x$ and $\hat{P}_y$ to rewrite Amp\`{e}re-Maxwell's law as 
\begin{equation} \label{eq:AmpereMaxwell_2D_1}
	\begin{aligned}
	\partial_t (\epsilon_\infty  \hat{E}_x + \theta_K \hat{E}_x (\hat{E}_x^2 + \hat{E}_y^2)+ \theta_R \hat{Q}\hat{E}_x) + \hat{J}_x - \frac{\partial_y \hat{H}_z}{\epsilon} = 0, && \\
	\partial_t (\epsilon_\infty  \hat{E}_y+ \theta_K \hat{E}_y  (\hat{E}_x^2 + \hat{E}_y^2) + \theta_R \hat{Q}\hat{E}_y) + \hat{J}_y + \frac{\partial_x \hat{H}_z}{\epsilon} = 0. &&
	\end{aligned}
\end{equation}
We then substitute the variables by their interpolants and truncate polynomials powers  
\begin{equation}
	E_{\alpha}^{\mathcal{H}}(E_{\beta}^{\mathcal{H}})^2 \approx \sum_{k=0}^{2m+1}\sum_{\ell=0}^{2m+1} \psi_{\alpha\beta_{k,\ell}} \xi^k\eta^\ell, \quad \psi_{\alpha\beta_{k,\ell}} = \sum_{i=0}^k\sum_{j=0}^\ell E_{\alpha_{i,j}}\phi_{\beta_{k-i,\ell-j}}.
\end{equation}
Focusing on the first equation of \eqref{eq:AmpereMaxwell_2D_1} and matching the remaining coefficients give
\begin{equation} \label{eq:dt_Ex_2D_1}
		\epsilon_\infty  E'_{x_{k,\ell}} + \theta_K ( \psi'_{xx_{k,\ell}} +  \psi'_{xy_{k,\ell}}) + \theta_R \bigg( \sum_{i=0}^k\sum_{j=0}^\ell E_{x_{i,j}}Q_{k-i,\ell-j}\bigg)' 
		+ J_{x_{k,\ell}} - \frac{(\ell+1)}{\epsilon\Delta y}H_{z_{k,\ell+1}} = 0, 
\end{equation}
for $k = 0,\dots,2m+1$ and  $\ell = 0, \dots, 2m$, 
	and  
\begin{equation}  \label{eq:dt_Ex_2D_2}
		\epsilon_\infty E'_{x_{k,\ell}} + \theta_K ( \psi'_{xx_{k,\ell}} + \psi'_{xy_{k,\ell}}) + \theta_R \bigg( \sum_{i=0}^k\sum_{j=0}^\ell E_{x_{i,j}}Q_{k-i,\ell-j}\bigg)' + J_{x_{k,\ell}} = 0, 
\end{equation}
	for $k = 0, \dots, 2m+1$ and $\ell=2m+1$. 

Extracting $E'_{x_{k,\ell}}$ and $E'_{y_{k,\ell}}$ from the terms $\psi'_{xx_{k,\ell}}$ and $\psi'_{xy_{k,\ell}}$ leads to the general expression
\begin{equation} \label{eq:dt_psi_2D}
	\begin{aligned}
	\psi'_{\alpha\beta{k,\ell}} = E'_{\alpha_{k,\ell}} \phi_{\beta_{0,0}} + 2 E'_{\beta_{k,\ell}} E_{\alpha_{0,0}} E_{\beta_{0,0}} 
	+ \sum_{i=0}^{k-1} \big( E'_{\alpha_{i,\ell}}\phi_{\beta_{k-i,0}} + 2E_{\alpha_{0,0}} E'_{\beta_{i,\ell}}E_{\beta_{k-i,0}} +  \phi'_{\beta_{i,\ell}}E_{\alpha_{k-i,0}}\big) & & \\
		+ \sum_{i=0}^k\sum_{j=0}^{\ell-1} \big( E'_{\alpha_{i,j}}\phi_{\beta_{k-i,\ell-j}} + 2E_{\alpha_{0,0}} E'_{\beta_{i,j}}E_{\beta_{k-i,\ell-j}} +  \phi'_{\beta_{i,j}}E_{\alpha_{k-i,\ell-j}}\big). & & 
	\end{aligned}
\end{equation}
Focusing now on the term of the form $\left( Q^{\mathcal{H}}E_\alpha^{\mathcal{H}} \right)'$ and using the ODEs for the coefficients of $Q^{\mathcal{H}}$, 
	we have 
\begin{equation} \label{eq:dt_QE_2D}
		 \bigg( \sum_{i=0}^k\sum_{j=0}^\ell E_{\alpha_{i,j}}Q_{k-i,\ell-j}\bigg)' = E'_{\alpha_{k,\ell}} Q_{0,0} + \sum_{i=0}^{k-1} E'_{\alpha_{i,\ell}}Q_{k-i,0} 
		+ \sum_{i=0}^k\sum_{j=0}^{\ell-1} E'_{\alpha_{i,j}}Q_{k-i,\ell-j} + \sigma_{\alpha_{k,\ell}}. 
\end{equation}
Here $\alpha$ and $\beta$ are either $x$ or $y$, 
	and 
\begin{equation}
	E_{\alpha}^{\mathcal{H}} S^{\mathcal{H}} \approx \sum_{k=0}^{2m+1} \sum_{\ell=0}^{2m+1} \sigma_{\alpha_{k,\ell}} \xi^k \eta^\ell, \quad \sigma_{\alpha_{k,\ell}} = \sum_{i=0}^k\sum_{j=0}^\ell E_{\alpha_{i,j}} S_{k-i,\ell-j}.
\end{equation} 

Using \eqref{eq:dt_psi_2D} and \eqref{eq:dt_QE_2D} in equations    \eqref{eq:dt_Ex_2D_1} and \eqref{eq:dt_Ex_2D_2}, 
    and repeating the same procedure for the second equation of system \eqref{eq:AmpereMaxwell_2D_1} 
    allow us to isolate the terms $E_{x_{k,\ell}}'$ and $E_{y_{k,\ell}}'$.
This leads to a $(2m+2)^2$ linear system of ODEs of the form
\begin{equation}
	M \mathbold{E}'_{k,\ell} = \mathbold{b}_{k,\ell}.
\end{equation}
Here $\mathbold{E}_{k,\ell} = [E_{x_{k,\ell}}, E_{y_{k,\ell}}]^T$, 
	the right-hand side $\mathbold{b}_{k,\ell}$ depends among other things on the previous computed $\mathbold{E}'_{i,j}$ but not on $\mathbold{E}'_{k,\ell}$,
	and the $2\times2$ matrix $M$ is 
\begin{equation} \label{eq:matrix_M_2D}
		\begin{bmatrix}
		\epsilon_\infty + \theta_K(3\phi_{x_{0,0}} + \phi_{y_{0,0}}) + \theta_R Q_{0,0} & 2 \theta_K E_{x_{0,0}}E_{y_{0,0}} \\
		2 \theta_K E_{x_{0,0}}E_{y_{0,0}}  & \epsilon_\infty + \theta_K(\phi_{x_{0,0}} + 3\phi_{y_{0,0}}) + \theta_R Q_{0,0}
		\end{bmatrix}.
\end{equation}
We therefore have a recursive system of ODEs for the electric field coefficients.
Note that the matrix $M$ is independent of the subscripts $(k,\ell)$, 
	and the order of the method. 
	
\subsection{A Note on the Three Dimensional Case}

In this subsection, 
	we introduce the proposed Hermite method for Maxwell's equations \eqref{eq:Maxwell_3D}, 
	providing only the key components since its extension from two to three space dimensions is straightforward. 
For legibility, 
	we omit the subscripts that indicate the spatial coordinates. 
	
Combining Amp\`{e}re-Maxwell's law and the nonlinear constitutive law \eqref{eq:constitutive_law_DEPQ_3D} leads to 
\begin{equation} \label{eq:AmpereMaxwell_3D_1}
	\partial_t (\epsilon_\infty \hat{\mathbold{E}} + \theta_K \|\hat{\mathbold{E}}\|^2\hat{\mathbold{E}} + \theta_R \hat{Q} \hat{\mathbold{E}}) + \hat{\mathbold{J}} -\frac{\nabla\times\hat{\mathbold{H}}}{\epsilon} = 0.
\end{equation}
Employ polynomial approximations of all variables of the form 
\begin{equation} \label{eq:Hermite_interpolant_3D}
	f^{\mathcal{H}}(x,y,z) = \sum_{k=0}^{2m+1}\sum_{\ell=0}^{2m+1}\sum_{r=0}^{2m+1} f_{k,\ell,r} \xi^k \eta^\ell \zeta^r.
\end{equation}
Following the same procedure as in the two space dimensional case, 
	the nonlinear terms $\hat{E}_x(\hat{E}_x^2+\hat{E}_y^2+\hat{E}_z^2)$ would lead to 
\begin{equation} \label{eq:nonlinear_Hermite_interpolant_3D}
	E_{\alpha}^{\mathcal{H}} (E_\beta^{\mathcal{H}})^2 \approx 
	\sum_{k=0}^{2m+1}\sum_{\ell=0}^{2m+1}\sum_{p=0}^{2m+1} \psi_{\alpha\beta_{k,\ell,p}}\xi^k\eta^\ell \zeta^p.
\end{equation}
Here $\alpha$ and $\beta$ are either $x$,
	$y$ and $z$, 
	and 
\begin{equation}
	\psi_{\alpha\beta_{k,\ell,p}} = \sum_{i=0}^k\sum_{j=0}^\ell\sum_{r=0}^p E_{\alpha_{i,j,r}} \phi_{\beta_{k-i,\ell-j,p-r}}.
\end{equation}
Substituting the polynomial approximations for each variable in \eqref{eq:AmpereMaxwell_3D_1} requires to extract $E'_{x_{k,\ell,p}}$,
	$E'_{y_{k,\ell,p}}$ and $E'_{z_{k,\ell,p}}$ from the nonlinear terms \eqref{eq:nonlinear_Hermite_interpolant_3D}.
This leads to 
\begin{equation}
	\begin{aligned}
		\psi'_{\alpha\beta_{k,\ell,p}} = E'_{\alpha_{k,\ell,p}} \phi_{\beta_{0,0,0}} + 2 E'_{\beta_{k,\ell,p}} E_{\beta_{0,0,0}} E_{\alpha_{0,0,0}}  
		+ \sum_{i=0}^{k-1} \big(E'_{\alpha_{i,\ell,p}}\phi_{\beta_{k-i,0,0}} + 
			2 E'_{\beta_{i,\ell,p}}E_{\beta_{k-i,0,0}}E_{\alpha_{0,0,0}} + \phi'_{\beta_{i,\ell,p}}E_{\alpha_{k-i,0,0}}\big)   & & \\ 
			 + \sum_{i=0}^k\sum_{j=0}^{\ell-1} \big( E'_{\alpha_{i,j,p}}\phi_{\beta_{k-i,\ell-j,0}}   
			+ 2 E'_{\beta_{i,j,p}}E_{\beta_{k-i,\ell-j,0}}E_{\alpha_{0,0,0}}  
		+ \phi'_{\beta_{i,j,p}}E_{\alpha_{k-i,\ell-j,0}}\big)  & & \\
		+ \sum_{i=0}^k\sum_{j=0}^{\ell}\sum_{r=0}^{p-1} ( E'_{\alpha_{i,j,r}}\phi_{\beta_{k-i,\ell-j,p-r}} + 2 E'_{\beta_{i,j,r}}E_{\beta_{k-i,\ell-j,p-r}}E_{\alpha_{0,0,0}}  
		+ \phi'_{\beta_{i,j,r}}E_{\alpha_{k-i,\ell-j,p-r}}). & & 
	\end{aligned}
\end{equation}
Isolating the time derivative of the coefficients $E_{x_{k,\ell,p}}$,
	$E_{y_{k,\ell,p}}$ and $E_{z_{k,\ell,p}}$ in the time evolution equations for the electric field coefficients leads 
	to $(2m+2)^3$ linear systems of ODEs of the form 
\begin{equation}
	M \mathbold{E}'_{k,\ell,p} = \mathbold{b}_{k,\ell,p}.
\end{equation}
Here $\mathbold{E}_{k,\ell,p} = [E_{x_{k,\ell,p}}, E_{y_{k,\ell,p}}, E_{z_{k,\ell,p}}]^T$ and the $3\times3$ matrix $M$ 
	defined by the components 
\begin{equation} \label{eq:matrix_M_3D}
	\begin{aligned}
		m_{1,1} = \epsilon_\infty + \theta_K(3\phi_{x_{0,0,0}} + \phi_{y_{0,0,0}} + \phi_{z_{0,0,0}}  ) + \theta_R Q_{0,0,0}, & &\\
		m_{2,2} = \epsilon_\infty + \theta_K(\phi_{x_{0,0,0}} + 3\phi_{y_{0,0,0}} + \phi_{z_{0,0,0}}  ) + \theta_R Q_{0,0,0}, & &\\
		m_{3,3} = \epsilon_\infty + \theta_K(\phi_{x_{0,0,0}} + \phi_{y_{0,0,0}} + 3\phi_{z_{0,0,0}}  ) + \theta_R Q_{0,0,0}, & &\\
		m_{1,2} = m_{2,1} = 2 \theta_R E_{x_{0,0,0}}E_{y_{0,0,0}}, & & \\
		m_{1,3} = m_{3,1} = 2 \theta_R E_{x_{0,0,0}}E_{z_{0,0,0}}, & & \\
		m_{2,3} = m_{3,2} = 2 \theta_R E_{y_{0,0,0}}E_{z_{0,0,0}}. & & 
	\end{aligned}
\end{equation}
Here again we obtain a recursive system of ODEs for the electric field coefficients and the matrix $M$ is independent of the subscripts $(k,\ell,p)$,
	and the order of the method.
	
\section{$P$-Adaptive Algorithm} \label{sec:p_adaptive_algo}

Since the Hermite method evolves in time the data independently on each cell, 
	it is straightforward to adapt the degree of the local Hermite interpolant and hence the polynomial approximants on a cell.
This is generally referred to as $p$-adaptivity.
Here we use the $p$-adaptive method presented in \cite{Chen2012} 
    to efficiently
    conduct long time simulations of wave packets with local structures, 
	such as solitons propagating through a Kerr-type medium.

To do so, 
	for each node of the primal and dual meshes in three space dimensions, 
	we track the data by defining the variable $m_{\alpha,[i,j,r]}$ representing the maximal order of the spatial derivatives with respect 
	to $\alpha$ available for all variables at the node $(x_i,y_j,z_r)$ of the mesh. 
Here, 
	for simplicity, 
	we assume $m_{[i,j,r]} = m_{x,[i,j,r]} = m_{y,[i,j,r]} = m_{z,[i,j,r]}$ is the same for all variables and in every coordinate direction. We note, however, that using spatially anisotropic approximations as well as differing degrees for each variable is a straightforward extension, albeit one which adds some implementational complexity.  
Note that we set $m_{[i,j,r]}$ to the maximum derivative order that is allowed at the initial time $t_0$.

Consider a half time step from the primal mesh at $t_n$ to the dual mesh at $t_{n+1/2}$. 
For each cell and each variable,
	we compute the $(2\bar{m}+1)^3$ degree Hermite polynomial using the function value and the derivatives through order 
	$\bar{m}$ at each vertex of the cell.
Here $\bar{m}$ is the minimal value of $m_{[i,j,r]}$ associated with the vertices forming the primal cell $[x_i, x_{i+1}]\times[y_j,y_{j+1}]\times[z_r,z_{r+1}]$, 
	centered at $(x_{i+1/2},y_{j+1/2},z_{r+1/2})$.
The evolution of the data at the cell center $(x_{i+1/2},y_{j+1/2},z_{r+1/2})$ from $t_n$ to $t_{n+1/2}$ is the same as described 
	in subsection~\ref{sec:time_evolution_1D}.

The $p$-adaptivity occurs at the end of the time evolution step. 
Having all the $(\bar{m}+1)^3$ polynomial coefficients at $(x_{i+1/2},y_{j+1/2},z_{r+1/2},t_{n+1/2})$ approximating 
	scaled derivatives of the variables, 
	we seek the smallest $m_{[i+1/2,j+1/2,r+1/2]}$ that satisfies
\begin{equation} \label{eq:cutoff_criteria}
	\begin{aligned}
		\max_{k>m_{[i+1/2,j+1/2,r+1/2]}} \|\mathbold{U}_{k,\ell,p}\|_\infty \leq \epsilon_{p_{tol}}, && \\
		\max_{\ell>m_{[i+1/2,j+1/2,r+1/2]}} \|\mathbold{U}_{k,\ell,p}\|_\infty \leq 	\epsilon_{p_{tol}}, && \\
		\max_{p>m_{[i+1/2,j+1/2,r+1/2]}} \|\mathbold{U}_{k,\ell,p}\|_\infty \leq 	\epsilon_{p_{tol}}, & & 
	\end{aligned}
\end{equation}
    and therefore determine the data available at $(x_{i+1/2},y_{j+1/2},z_{r+1/2},t_{n+1/2})$ on the dual mesh.
Here $\epsilon_{p_{tol}}$ is the given tolerance and
\begin{equation}
    \mathbold{U}_{k,\ell,p} = (\mathbold{H}_{k,\ell,p}, \mathbold{E}_{k,\ell,p},\mathbold{P}_{k,\ell,p},\mathbold{J}_{k,\ell,p},Q_{k,\ell,p},S_{k,\ell,p})
\end{equation} 
    is a vector containing all the polynomial 
	coefficients associated with the subscripts $(k,\ell,p)$ at $(x_{i+1/2},y_{j+1/2},z_{r+1/2},t_{n+1/2})$.

In summary, 
    given the data on the primal mesh at time $t_n$, 
    the algorithm of the $p$-adaptive Hermite method to execute a complete time step is: 
\begin{itemize}
    \item[(i)] for each primal cell at time $t_n$, 
        find the value of $\bar{m}$, 
        compute the unique degree $(2\bar{m}+1)^3$ Hermite interpolant centered at the cell's center for each variable, 
        and extract their coefficients;
    \item[(ii)] for each primal cell, 
        solve numerically the system of ODEs \eqref{eq:ODE_system} to evolve all the coefficients from $t_n$ to $t_{n+1/2}$. 
        We now have all the polynomial coefficients at each dual mesh node;
    \item[(iii)] for each dual mesh node, 
        determine $m$ using the cutoff criterion \eqref{eq:cutoff_criteria};
    \item[(iv)] for each dual cell at time $t_{n+1/2}$, 
        find the value of $\bar{m}$, 
        compute the unique degree $(2\bar{m}+1)^3$ Hermite interpolant centered at the cell's center for each variable, 
        and extract their coefficients;
    \item[(v)] for each dual cell, 
        solve numerically the system of ODEs \eqref{eq:ODE_system} to evolve all the coefficients from $t_{n+1/2}$ to $t_{n+1}$. 
        We now have all the polynomial coefficients at each primal mesh node;
    \item[(vi)] for each primal mesh node, 
        determine $m$ using the cutoff criterion \eqref{eq:cutoff_criteria}.
\end{itemize}


\section{Properties of the Method} \label{sec:analysis_method}

\subsection{Solving the Linear Systems}

The proposed Hermite method requires solving small linear systems to evolve the electric field polynomial coefficients in time. 
In the situation when an explicit RK method is used, 
	the symmetric matrix $M$ is constant at each stage,
	independent of the order, 
	and depends only on the coefficients $\mathbold{E}_{0,0,0}$ and $Q_{0,0,0}$ computed at the 
	previous stage. 
Since the system involves at most three linear equations, 
	we explicitly implement the solution.

The solvability of the linear system is related to the conditions which guarantee the local hyperbolicity of the system with frozen coefficients. Specifically, the principal part of the frozen coefficient system takes the form:
\begin{eqnarray*}
\mu \partial_t \tilde{\mathbold{H}} + \nabla \times \tilde{\mathbold{E}} & = & 0, \\
M \partial_t \tilde{\mathbold{E}} - \nabla \times \tilde{\mathbold{H}} & = & 0, \\
\partial_t \tilde{\mathbold{P}} = \partial_t \tilde{\mathbold{J}} = \partial_t \tilde{\mathbold{Q}} = \partial_t \tilde{\mathbold{S}} & = & 0,
\end{eqnarray*}
where the matrix $M$ is evaluated at the local values of $\hat{\mathbold{E}}$, $\hat{\mathbold{Q}}$. This system is hyperbolic if and only if $M$ is positive definite. Under the same assumptions for the point values of the approximate solution we can solve the system for the time derivatives of the electric field. 

\begin{proposition}
If the matrix $M$ evaluated at $\mathbold{\phi}_{0,0}$, $\mathbold{Q}_{0,0}$ is positive definite then the system of equations for the time derivatives is uniquely solvable. This condition always holds when only the Kerr term is considered, $(\theta = 0)$ and $a\geq0$. 
\end{proposition}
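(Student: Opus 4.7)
The plan is to establish both assertions by exploiting the triangular structure of the Hermite ODE system for the first part, and by recognizing $M$ as a sum of manifestly positive-(semi)definite matrices in the pure Kerr limit for the second. For the first assertion, the equations for the electric-field time derivatives all take the form $M\mathbold{E}'_{k,\ell} = \mathbold{b}_{k,\ell}$, where $\mathbold{b}_{k,\ell}$ depends only on $\mathbold{E}'_{i,j}$ with multi-indices $(i,j)$ strictly preceding $(k,\ell)$ --- this dependency structure is explicit in \eqref{eq:dt_psi_2D} and \eqref{eq:dt_QE_2D}, where the terms involving $E'_{k,\ell}$ have been cleanly separated from the remaining sums. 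Ordering multi-indices lexicographically and proceeding by induction, at each step the right-hand side $\mathbold{b}_{k,\ell}$ is fully determined by previously computed quantities, so positive definiteness of $M$ (hence its invertibility) yields a unique $\mathbold{E}'_{k,\ell}$. The same reasoning applies verbatim in 1D (where $M$ reduces to the positive scalar in \eqref{eq:matrix_M_1D}) and in 3D.

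For the second assertion, substituting $\theta = 0$ gives $\theta_K = a$ and $\theta_R = 0$. Using the identity $\phi_{\alpha_{0,0}} = E_{\alpha_{0,0}}^2$, I would rewrite \eqref{eq:matrix_M_2D} in the compact form
\[
    M = \epsilon_\infty I + a\bigl(\|\mathbold{E}_{0,0}\|^2\, I + 2\,\mathbold{E}_{0,0}\mathbold{E}_{0,0}^{T}\bigr),
\]
where $\mathbold{E}_{0,0} = (E_{x_{0,0}}, E_{y_{0,0}})^{T}$; a routine entrywise check confirms this identity on both the diagonal and off-diagonal. Since $\epsilon_\infty > 0$ and the bracketed matrix is a sum of the positive-semidefinite matrices $\|\mathbold{E}_{0,0}\|^2 I$ and the rank-one Gram matrix $\mathbold{E}_{0,0}\mathbold{E}_{0,0}^{T}$, the full $M$ is strictly positive definite whenever $a \geq 0$. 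An identical decomposition with $I$ of size $3\times 3$ handles the matrix in \eqref{eq:matrix_M_3D} in the three-dimensional case.

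The main obstacle is conceptual rather than technical: one must verify that the recursion is genuinely triangular, i.e., that $\mathbold{b}_{k,\ell}$ has no hidden dependence on $\mathbold{E}'_{k,\ell}$ itself. This is already built into the derivation leading to \eqref{eq:dt_psi_2D} and \eqref{eq:dt_QE_2D}, where $E'_{k,\ell}$ has been explicitly isolated from the quadratic and cubic nonlinearities before the linear system is assembled. Once this separation is acknowledged, the first assertion reduces to a standard induction and the second to the elementary observation that a positive multiple of the identity plus a positive semidefinite matrix is positive definite.
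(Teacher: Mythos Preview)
Your argument is correct. For the first assertion it coincides with the paper's: both rely on the triangular structure of the recursion already established in \eqref{eq:dt_psi_2D}--\eqref{eq:dt_QE_2D}, and you have simply made the induction explicit where the paper says the result is ``automatic based on the recursions''.

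For the Kerr case your route is genuinely different. The paper proceeds dimension by dimension: in 1D it observes $M=\epsilon_\infty+3a\phi_0>0$; in 2D it computes $\det M$ explicitly and notes the positive diagonal; in 3D it factors $\det M=(\epsilon_\infty+a\sum_\alpha\phi_\alpha)^2(\epsilon_\infty+3a\sum_\alpha\phi_\alpha)$ and appeals to Sylvester's criterion for the remaining leading minors. Your decomposition $M=\epsilon_\infty I + a\bigl(\|\mathbold{E}_0\|^2 I + 2\,\mathbold{E}_0\mathbold{E}_0^{T}\bigr)$ handles all dimensions uniformly and makes positive definiteness immediate without any determinant calculation. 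The advantage of your approach is economy and transparency; the paper's computation, on the other hand, yields the eigenvalues of $M$ explicitly (namely $\epsilon_\infty+a\|\mathbold{E}_0\|^2$ with multiplicity $d-1$ and $\epsilon_\infty+3a\|\mathbold{E}_0\|^2$), which could be useful if one later needs quantitative bounds on $M^{-1}$.
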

\begin{proof}
If $M$ is positive definite the result is automatic based on the recursions described in the previous sections. Therefore we only consider the Kerr case. 
In one space dimension, 
	$M = \epsilon_\infty + 3a\phi_0 > 0$ since $\phi_0 = E_0^2 \geq 0$. 

In two space dimensions, 
	the determinant of the $2\times2$ matrix $M$, 
	defined by \eqref{eq:matrix_M_2D}, 
	is 
\begin{equation}
	\det{M} = \epsilon_\infty + 4a\epsilon_\infty(\phi_{x_{0,0}} + \phi_{y_{0,0}}) + 3a^2(\phi_{x_{0,0}}+\phi_{y_{0,0}})^2 > 0,
\end{equation}
	since $\phi_{\alpha_{0,0}} = E_{\alpha_{0,0}}^2 \geq 0$. As the diagonal entries are also positive the result holds. 

In three space dimensions, 
	the determinant of the $3\times3$ matrix $M$ with the components $\eqref{eq:matrix_M_3D}$ is 
\begin{equation}
	\det{M} = (\epsilon_\infty + a(\phi_{x_{0,0,0}} + \phi_{y_{0,0,0}} + \phi_{z_{0,0,0}}))^2(\epsilon_\infty + 3a(\phi_{x_{0,0,0}}+\phi_{y_{0,0,0}} + \phi_{z_{0,0,0}})) > 0,
\end{equation}
	since $\phi_{\alpha_{0,0,0}} = E_{\alpha_{0,0,0}}^2 \geq 0$. We can similarly check the subdeterminants and conclude by Sylvester's criterion that $M$ is positive definite. 
\end{proof}

\subsection{Computational Cost}

The computational cost of the Hermite method is dominated by the Hermite interpolation procedures 
	and the computation of the right-hand side of the 
	system of ODEs for the polynomial coefficients. 
As shown in \cite{Chen2012}, 
	the cost of the Hermite interpolation is $\mathcal{O}(m^{d+1})$ in $\mathbb{R}^d$. 
In the following, 
	the cost of the computation of the right-hand side (RHS) of the system of ODEs \eqref{eq:ODE_system} for the coefficients is performed. 

Consider the one space dimensional case.
At the beginning of each stage, 
	we compute the vectors $\mathbold{\phi}$ and $\mathbold{\sigma}$ containing respectively the coefficients $\phi_k$ and $\sigma_k$.
Using the fast Fourier transform (FFT), 
	the computation of these vectors requires $\mathcal{O}((2m+1)\log(2m+1))$ operations \cite{Cormen2022}.

The computation of the RHS of the linear PDE part \eqref{eq:ode_coeff_magnetic_field_part_1} and \eqref{eq:ode_coeff_magnetic_field_part_2}, 
	requires $\mathcal{O}(m)$ operations. 
As for the linear ODE part \eqref{eq:ode_coeff_auxiliary_P_J_Q}, 
	only the terms $J_k'$ require some computations for a total of $\mathcal{O}(m)$ operations. 
Since we already compute the vector $\mathbold{\phi}$, 
	the cost for the nonlinear ODE part \eqref{eq:ode_coeff_auxiliary_S} is also $\mathcal{O}(m)$ operations. 

As for the computation of the RHS of the nonlinear PDE part \eqref{eq:ode_coeff_E_1} and \eqref{eq:ode_coeff_E_2}, 
	we assume that we construct the vector $\mathbold{\phi}'$ containing the terms $\phi_k'$ 
	after each computation of the RHS of $E_k'$.
Note that we cannot take advantage of the FFT to compute this vector 
	since we have to isolate the terms $E_k'$ in order to avoid using a nonlinear solver.
The cost of $\mathbold{\phi}'$ is therefore equivalent of a direct implementation of two polynomial products, 
	that is $\mathcal{O}(m^2)$.
For each term $E_k'$,	
	the RHS of this term involves $\sigma_p$, 
	$\phi_p$, 
	$E_p'$ and $\phi_p'$ for $p=0,\dots,k-1$, 
	that were already computed and stored.
The number of operations is therefore dominated by the terms of the form $\sum_{p=0}^k A_pB_{k-p}$, 
	which require $\mathcal{O}(k)$ operations with $k \leq 2m+1$. 
	
In 1-D, 
	the total computational cost of the RHS of the nonlinear PDE part is $\mathcal{O}(m^2)$ operations.
We therefore conclude that the cost of the RHS of the system \eqref{eq:ODE_system} is $\mathcal{O}(m^2)$ operations, 
	which has the same order as the cost of the Hermite interpolation procedure.
	
Repeating the same analysis for the two and three dimensional cases and considering $m = m_x = m_y = m_z$, 
	the total cost is dominated by the implementation of multi-dimension polynomial products that require 
	$(m+1)^d(2m+3)^d$ multiplications and $(m+1)^d(2m+3)^{d-1}(2m+1)$ additions in $\mathbb{R}^d$, 
	and the summations for the RHS of
	the time derivatives of the electric field coefficients, 
	leading to $\mathcal{O}(m^4)$ and $\mathcal{O}(m^6)$ operations in 2-D and 3-D.
This exceeds the computational cost of the Hermite interpolation procedure. 

Consider the total number of evaluations of the RHS,
	denoted $s$, 
	for an explicit time-stepping method, 
	the total cost for the evolution of the polynomial coefficients over half a time step is $\mathcal{O}(sN_{\tau}m^{2d})$ in $\mathbb{R}^d$.
The computational cost of the proposed Hermite method is reasonable for small values of $m$,
	leading to a $2m+1$ order method.  
Moreover, 
	for soliton-like wave propagation, 
	the cost could be further reduced using the $p$-adaptive algorithm described in Section~\ref{sec:p_adaptive_algo}.
In this work, 
	we use $m\leq 4$ without $p$-adaptivity and $m\leq6$ with it, restricting the spatial order to $9$ and $13$ respectively.

\subsection{Local Truncation Error Analysis in Space}

In this subsection, 
	we investigate the impact of the truncation of the polynomials $(E_\beta^\mathcal{H})^2$ and $E_\alpha^{\mathcal{H}}(E_\beta^\mathcal{H})^2$.
We limit the analysis to one space dimension.

Consider a cell $[x_i,x_{i+1}]$ and the exact data as a function of time $t$ at all primal nodes for each variable.
For a given variable $\hat{f}$ at $(x_i,t)$,
	we then have the exact data
	$\partial^{k} \hat{f}(x_i,t)$, 
	$k=0,\dots,m$.  
For a sufficiently smooth $\hat{f}$, 
	the error $e_f(x,t) = \hat{f}(x,t) -   f^{\mathcal{H}}(x,t)$ of the Hermite interpolant of degree $2m+1$ satisfies for $x \in (x_i,x_{i+1})$ and $0 \leq k \leq m+1$ (\cite[Thm. 9]{CiarletSchultzVarga})
\begin{equation}
\left\arrowvert \partial_x^k e_f(x,t) \right\arrowvert \leq 
	\frac{1}{(2m+2-2k)! k!}((x-x_{i})(x_{i+1}-x))^{m+1-k} (\Delta x)^k \| \frac{\partial^{2m+2}\hat{f}}{\partial x^{2m+2}} \|_{C^0} . 
\end{equation} 

Substituting the magnetic field and the electric field in Faraday's law \eqref{eq:Faraday_1D} by their associated Hermite interpolants and errors leads to 
	\begin{eqnarray}
		\frac{\partial \hat{H}(x,t)}{\partial t} - \frac{\partial \hat{E}(x,t)}{\partial x} & = &
        \frac{\partial H^{\mathcal{H}}(x,t)}{\partial t} - \frac{\partial E^{\mathcal{H}}(x,t)}{\partial x} + \tau_H  \label{eq:local_trunc_err_Faraday_law} \\ 
        \left\arrowvert \tau_H \right\arrowvert & \leq & \frac{1}{(2m+2)!} ((x-x_{i})(x_{i+1}-x))^{m+1} \| \frac{\partial^{2m+3}\hat{H}}{\partial t\partial x^{2m+2}} \|_{C^0} \nonumber \\ & &  + \frac{\Delta x}{(2m)!} ((x-x_{i})(x_{i+1}-x))^m \| \frac{\partial^{2m+2}\hat{E}}{\partial x^{2m+2}}\|_{C^0}.  \nonumber
        \end{eqnarray}
The local truncation error in space is then 
	$\mathcal{O}(\Delta x^{2m+1})$. 
For equations \eqref{eq:P_ODE_1D},
	\eqref{eq:J_ODE_1D} and \eqref{eq:Q_ODE_1D} we have a truncation error in space of $\mathcal{O}(\Delta x^{2m+2})$ as no spatial derivatives appear.
	
Let us now consider equation \eqref{eq:S_ODE_1D}. 
In this case, 
	we truncate the square of the Hermite interpolant of the electric field, 
	introducing an extra error. 
Focusing on this term,
	we have 
\begin{equation} \label{eq:E_square_trunc_error}
		E^2 = (E^\mathcal{H} + e_E )^2 = (E^\mathcal{H})^2 + 2E^{\mathcal{H}} e_E + e_E^2 = \Phi + \mathcal{O}(\Delta x^{2m+2}).
\end{equation}
Here 
$$\Phi(x,t) =  \displaystyle\sum_{k=0}^{2m+1} \phi_k(t) \xi^k(x).$$
At the cell center, 
	note that $(E^{\mathcal{H}}(x_{i+1/2},t))^2 = \phi_0(t)$, 
	eliminating the extra error term coming from the truncation of the polynomial $(E^\mathcal{H})^2$. 
Substituting the remaining variables by their associated Hermite interpolants and errors in equation \eqref{eq:S_ODE_1D} and 
	considering equation \eqref{eq:E_square_trunc_error} lead to 
	a local truncation error of $\mathcal{O}(\Delta x^{2m+2})$. 

Finally, 
	we investigate equation \eqref{eq:AmpereMaxwell_1}. 
Here also we introduce an additional error term due to the truncation of $(E^{\mathcal{H}})^3$ and
	$E^{\mathcal{H}}Q^{\mathcal{H}}$.
We have 
\begin{equation}
	E^3 = (E^\mathcal{H} + e_E )^3 = (E^\mathcal{H})^3 + 3(E^{\mathcal{H}})^2 e_E +3 E^{\mathcal{H}} e_E^2  + e_E^3 = \Psi + \mathcal{O}(\Delta x^{2m+2}).
\end{equation}
Here 
$$\Psi(x,t) = \displaystyle\sum_{k=0}^{2m+1} \psi_k(t)\xi^k(x).$$
Note that $\Psi(x_{i+1/2},t) = \psi_0(t)$.
Following the same procedure as $(E^\mathcal{H})^2$ for the product $E^{\mathcal{H}}Q^{\mathcal{H}}$ leads to $\mathcal{O}(\Delta x^{2m+2})$.
Using the fact that the time derivative acts only on the polynomial coefficients and following a similar procedure as before, 
	we obtain a local truncation error in space of $\mathcal{O}(\Delta x^{2m+1})$. 

In summary, 
	the truncation error in space introduced in the approximation of the polynomial products  
	has higher order than the derivative approximations.
Therefore we do not expect that the truncation procedure required to approximate the polynomial products impact on the overall order of the method.

\subsection{Stability}	
	
For linear hyperbolic problems, 
    the  Hermite-Taylor method is stable under the condition $\Delta t\leq h/c$ with $c$ being the largest wave speed, 
    regardless of its order of accuracy \cite{Goodrich2005}. 
In other words, 
    the stability condition depends only on the domain of dependance of the PDE. 
When using the Hermite-Runge-Kutta method, 
	numerical experiments in \cite{Hagstrom2007} show that the method is stable with a CFL number slightly smaller than one for a sufficiently large number of sub-steps in time.
	
Based on the insights of the linear case, 
	in our current setting, 
	we adapt the number of time sub-steps locally to match the accuracy of the Hermite method in space, 
	leading to 
\begin{equation} \label{eq:nb_substeps_equation}
	N_\tau = N_\tau^* = \bigg\lceil\frac{\Delta t}{2 h^{\frac{2\bar{m}+1}{k}}}\bigg\rceil,
\end{equation} 
	for a $k^{th}$-order RK method.
Here $h=\min\{\Delta x, \Delta y, \Delta z\}$ and $k=5$. 
That being said, 
	note that the maximal value of $N_\tau$ could also be fixed for all considered $m$, 
    but one should expect a reduction of the time step size $\Delta t$ in this situation as shown in \cite{Hagstrom2007} for the linear case.
The wave speeds for this problem could, in principle, become unbounded if the eigenvalues of the matrix $M$ became small, in which case $\Delta t$ would have to be decreased. We did not encounter this in our numerical experiments and we found that our choice of sub-steps provided a stable method. This is
   demonstrated in the numerical examples of Section~\ref{sec:numerical_examples}.


\section{Numerical Examples} \label{sec:numerical_examples}

In this section,
	we perform convergence studies to verify the proposed Hermite method in one and two dimensions.
The performance of the $p$-adaptive method is also investigated.

\subsection{Examples in One Space Dimension}

We first consider the manufactured solution
\begin{equation}
	\begin{aligned}
		\hat{H} = \sin(wx)\sin(wt), \quad \hat{E} = -\cos(wx)\cos(wt), \quad \hat{P} = -a\hat{E}^3,  & & \\
		\hat{J} = -3aw\cos(wx)\sin(wt)\hat{E}^2, \quad \hat{Q} = \hat{E}^2, \quad \hat{S} = 2w\cos(wx)\sin(wt)\hat{E}.  & &
	\end{aligned}
\end{equation}
Here $w = 10\pi$.
All the physical parameters are one, 
	except for $a = 1/3$, 
	$\theta = 1/2$,
	$\gamma = 1/20$ and $\gamma_v=1/20$. 
Note that source terms are required for the ODEs \eqref{eq:J_ODE_1D} and \eqref{eq:S_ODE_1D} associated with $\hat{J}$ and $\hat{S}$. 

The domain is $\Omega = [0,1]$ and the time interval is $I=[0,10]$.
We consider $m=1-4$ and set the time step size to $\Delta t = \Delta x/2$ and the number of time sub-steps to $N_\tau = N_\tau^*$ for all considered values of $m$.
The left plot in Figure~\ref{fig:conv_1d} illustrates the relative error in maximum norm as a function of the mesh size,
	confirming the expected $2m+1$ rate of convergence. 
\begin{figure}   
	\centering
	\begin{adjustbox}{max width=1.0\textwidth,center}
		\includegraphics[width=2.5in,trim={0.0cm 0cm 1.75cm 0cm},clip]{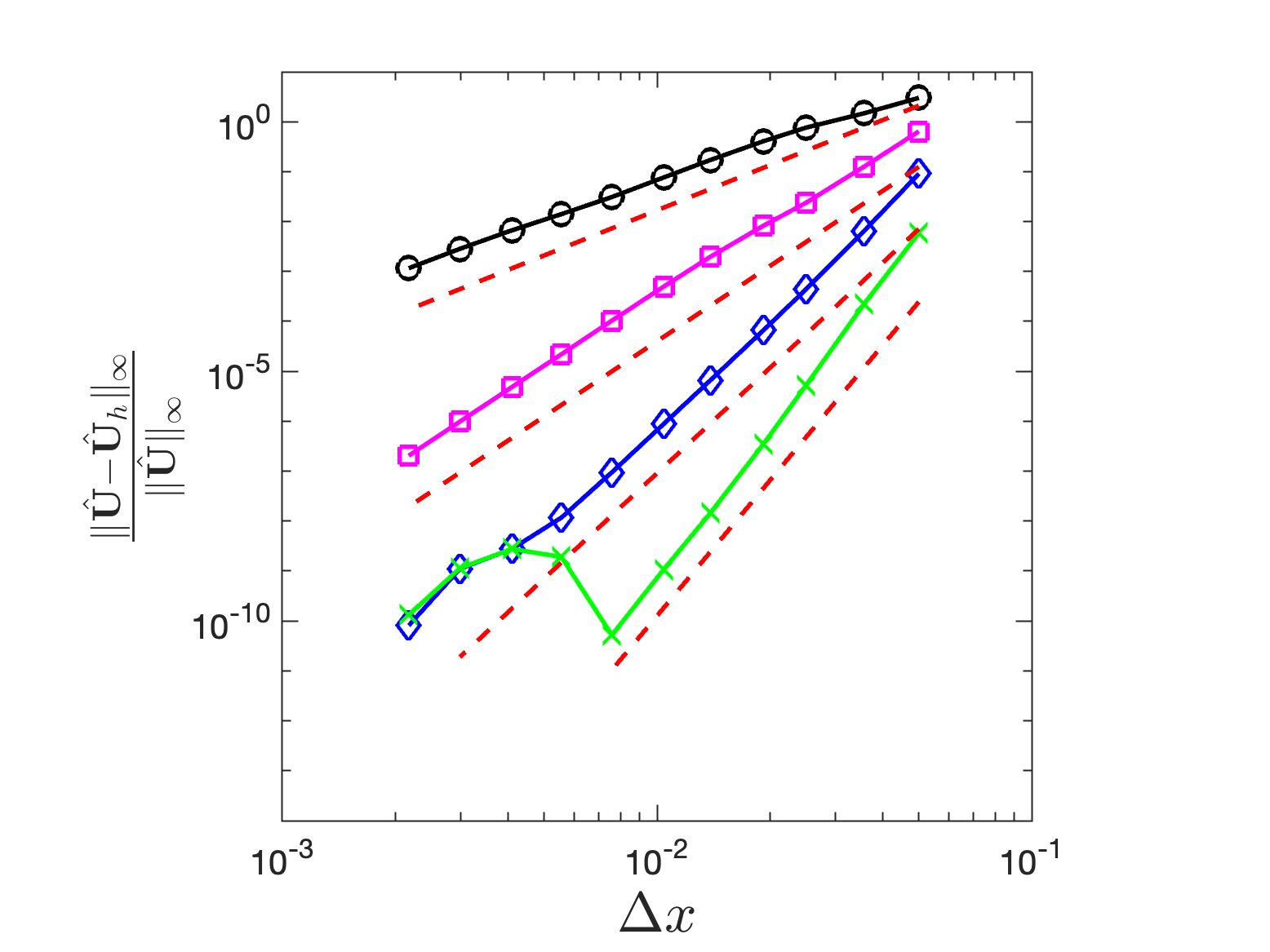} \hspace{-20pt}
		\includegraphics[width=2.5in,trim={0.0cm 0.0cm 1.75cm 0cm},clip]{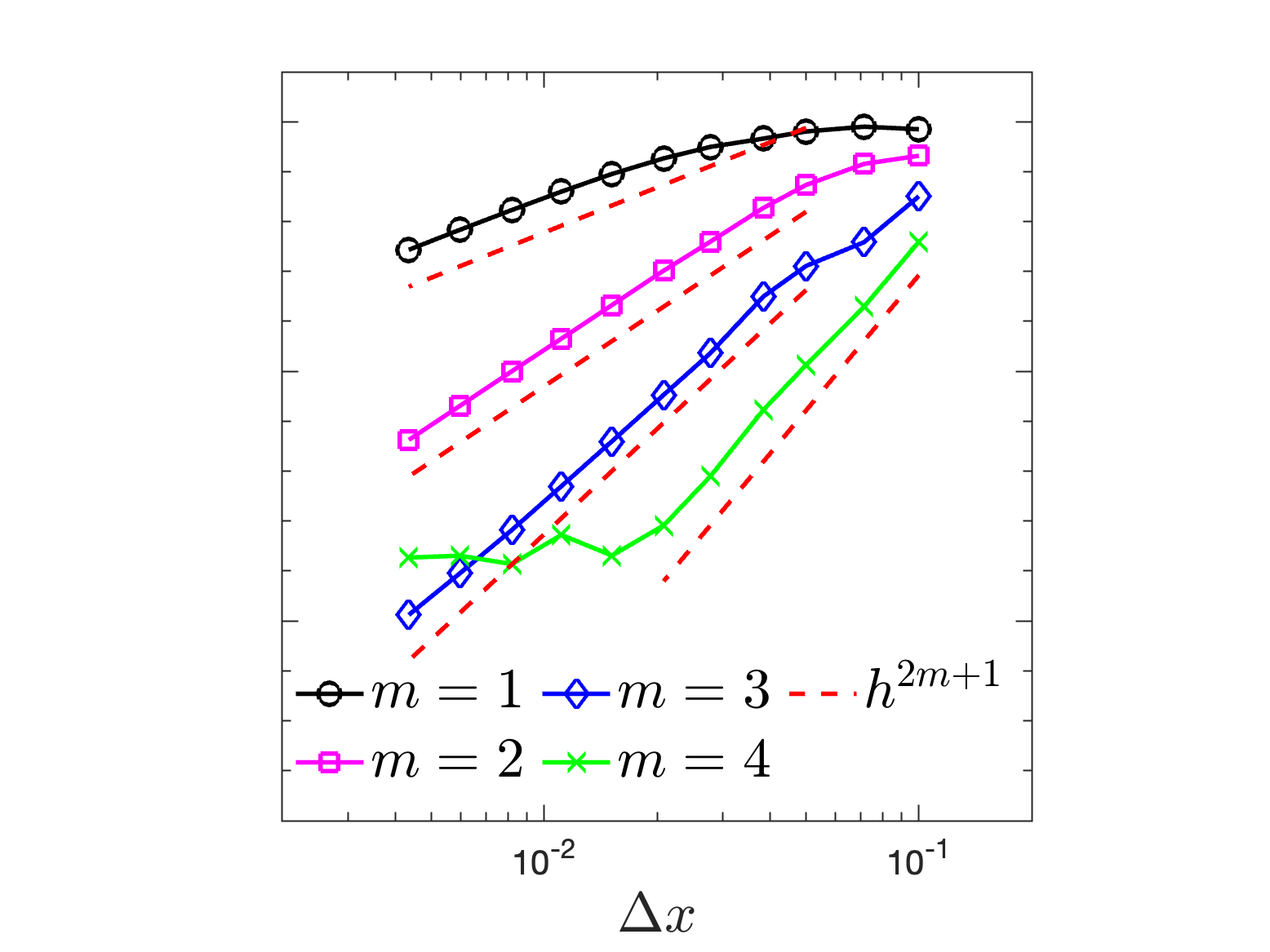}
	\end{adjustbox} 
       \caption{Convergence plots for manufactured solutions problems for $m=1-4$ in one space dimension. 
       		Here $\hat{\mathbf{U}}$ is a vector containing all variables.}
       \label{fig:conv_1d}
\end{figure}

As a second numerical example, 
	we consider the manufactured solution, 
\begin{equation}
	\begin{aligned}
		\hat{H} = \hat{E} = -\eta e^{-(\frac{\eta}{\sigma})^2}, \quad \hat{P} = -a \hat{E}^3, \quad 
		\hat{J} = -\frac{3 a \eta^2 e^{-3(\frac{\eta}{\sigma})^2}(2\eta^2-\sigma^2)}{\sigma^2}, & & \\ 
	 \hat{Q} = \hat{E}^2, \quad \hat{S} = -\frac{2 \eta e^{-2(\frac{\eta}{\sigma})^2}(2\eta^2-\sigma^2)}{\sigma^2}, \quad \eta = x + t, & & 
	\end{aligned}
\end{equation} 
	representing a localized pulse traveling to the left at constant speed of one.
Here $\sigma=0.1$.
All physical parameters are the same as before and source terms are also required in the ODEs \eqref{eq:J_ODE_1D} and \eqref{eq:S_ODE_1D}.

We set $\Omega = [-1,1]$, 
	$I =[0,10]$ and $\Delta t = \Delta x/2$.
The right plot of Figure~\ref{fig:conv_1d} illustrates the relative error in maximum norm as a function of the mesh size for $m=1-4$.
As expected, 
	we observe a $2m+1$ rate of convergence. 

Let us now consider the $p$-adaptive method for the second numerical example. 
We set the time interval $I=[0,100]$ and $0\leq m \leq 6$.
Note that the $\hat{\mathbf{U}}(x,0) = \hat{\mathbf{U}}(x,100)$.
The values of $N_\tau$ are taken as in equation \eqref{eq:nb_substeps_equation}. 
\begin{figure}   
	\centering
	\begin{adjustbox}{max width=1.0\textwidth,center}
		\includegraphics[width=2.5in,trim={0.0cm 0cm 1.75cm 0cm},clip]{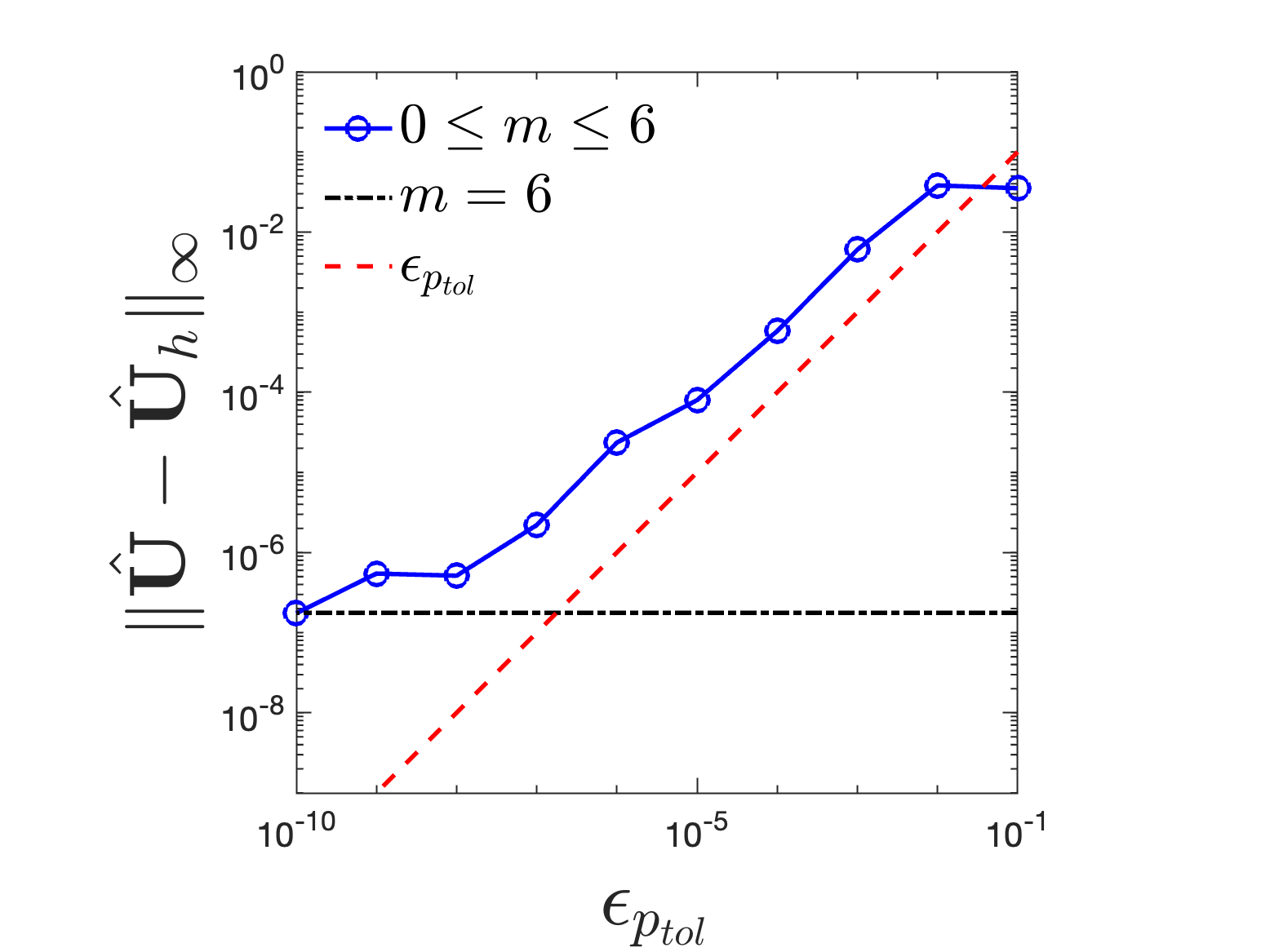} \hspace{-17pt}
		\includegraphics[width=2.5in,trim={0.0cm 0.0cm 1.75cm 0cm},clip]{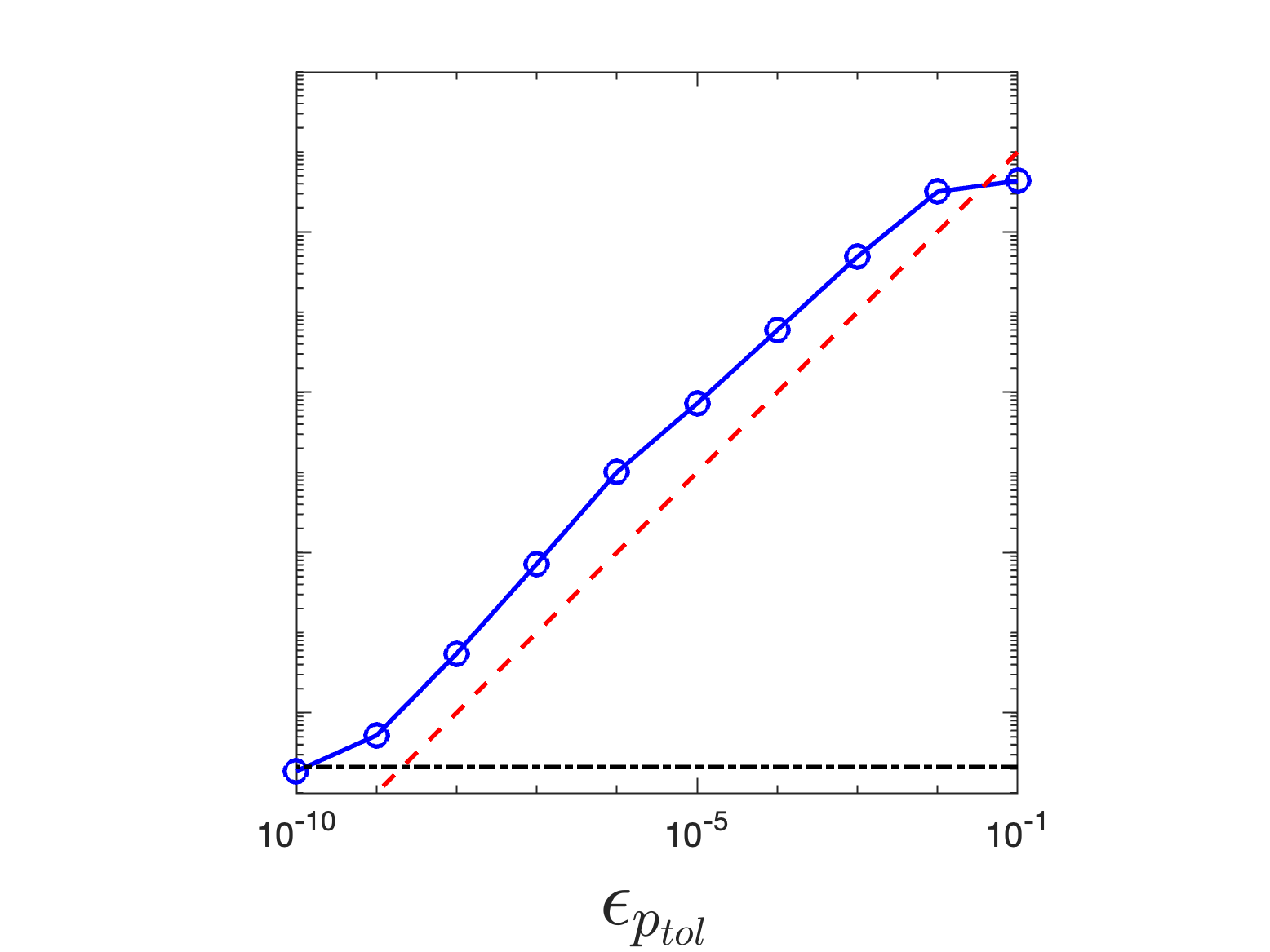}\hspace{-17pt}
		\includegraphics[width=2.5in,trim={0.0cm 0.0cm 1.75cm 0cm},clip]{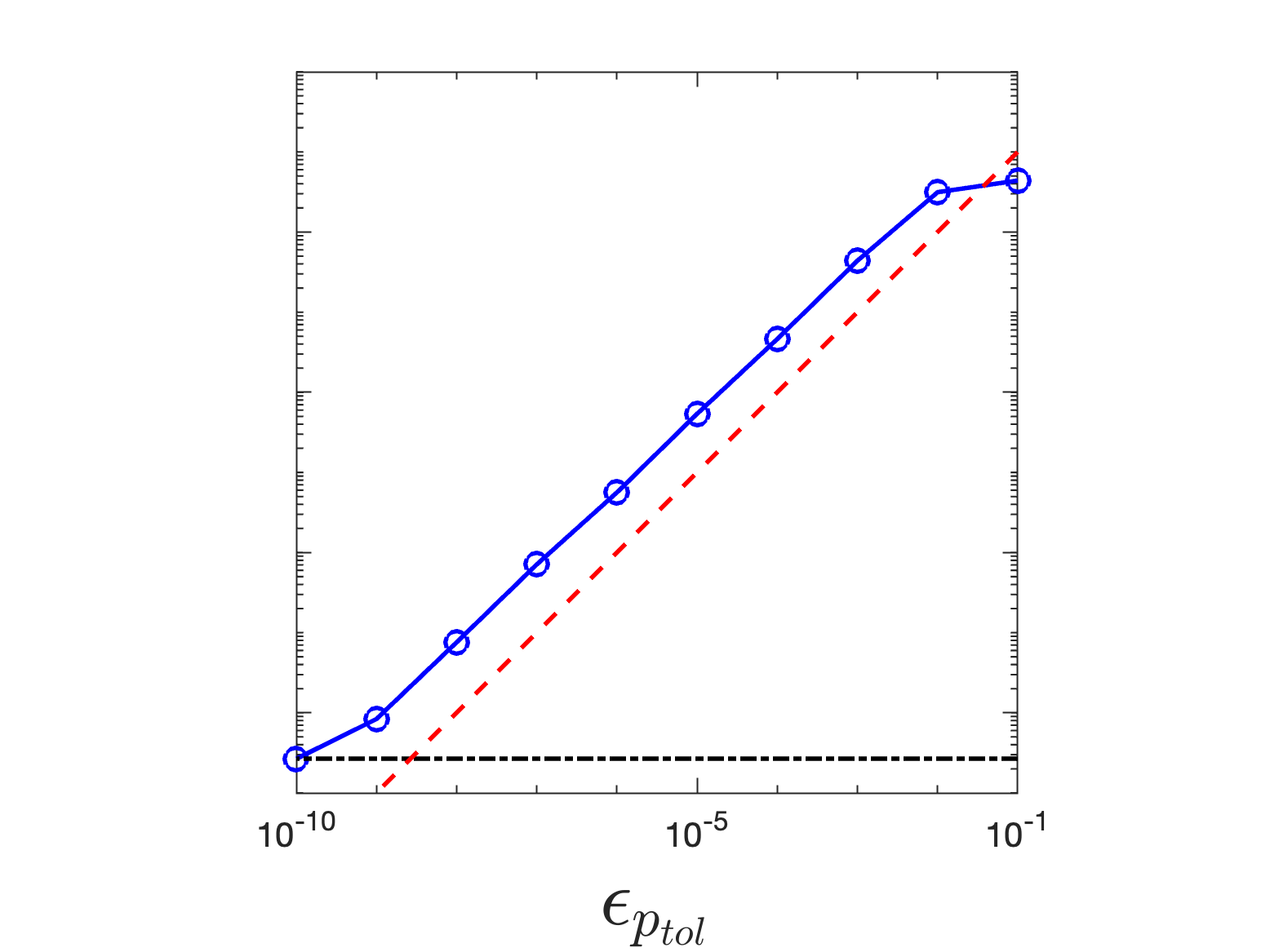}
	\end{adjustbox} 
       \caption{The error in maximum norm as a function of the tolerance $\epsilon_{p_{tol}}$ using the $p$-adaptive algorithm for the manufactured solution problem representing the propagation of a pulse. 
       		The left, middle and right plots represent respectively the mesh size $\Delta x = 1/12.5$, $\Delta x = 1/25$ and $\Delta x = 1/50$.
       		Here $\hat{\mathbf{U}}$ is a vector containing all variables.}
       \label{fig:p_adapt_1d_err}
\end{figure}
\begin{figure}   
	\centering
	\begin{adjustbox}{max width=1.0\textwidth,center}
		\includegraphics[width=2.5in,trim={0.0cm 0cm 1.75cm 0cm},clip]{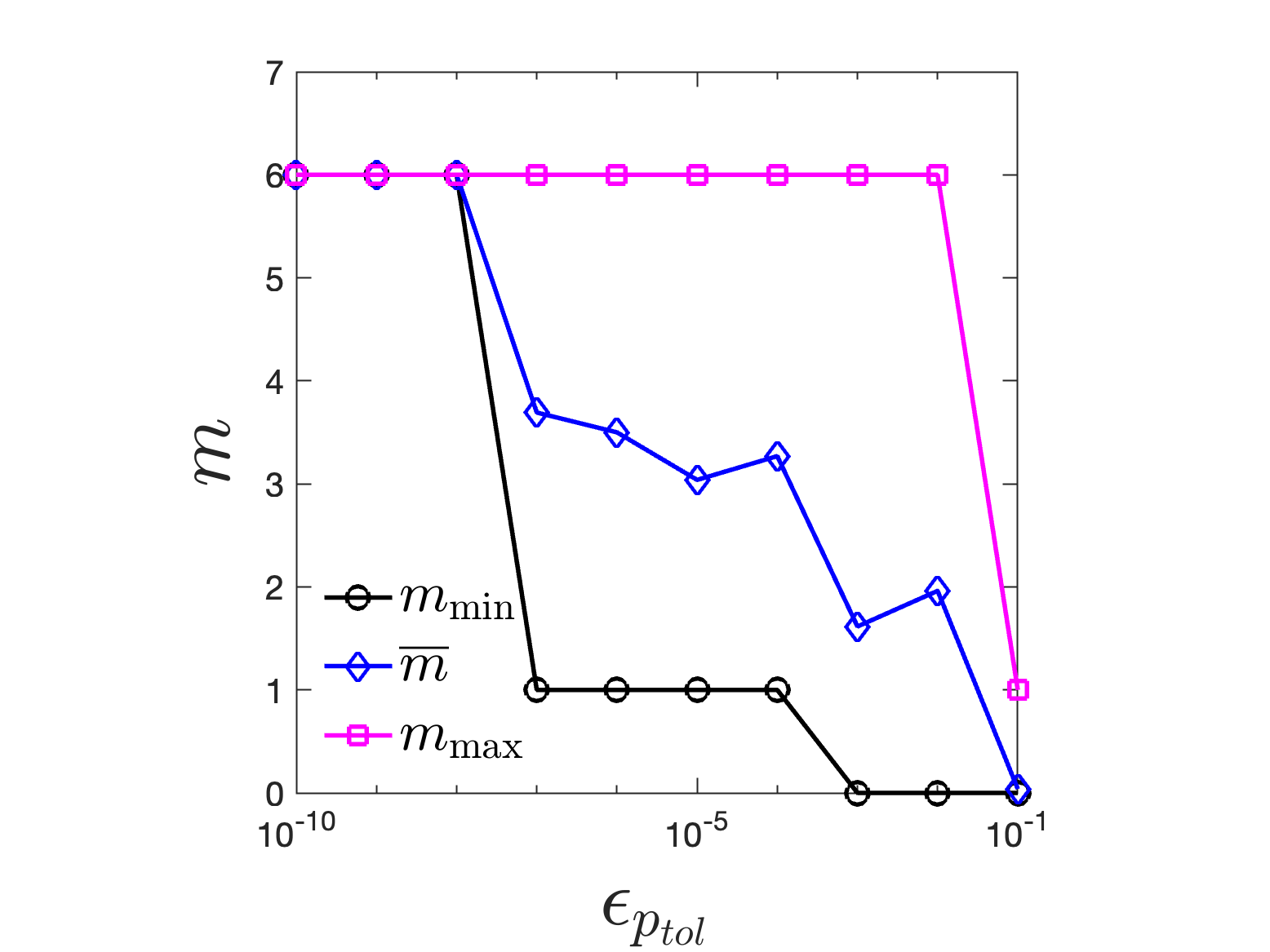} \hspace{-17pt}
		\includegraphics[width=2.5in,trim={0.0cm 0.0cm 1.75cm 0cm},clip]{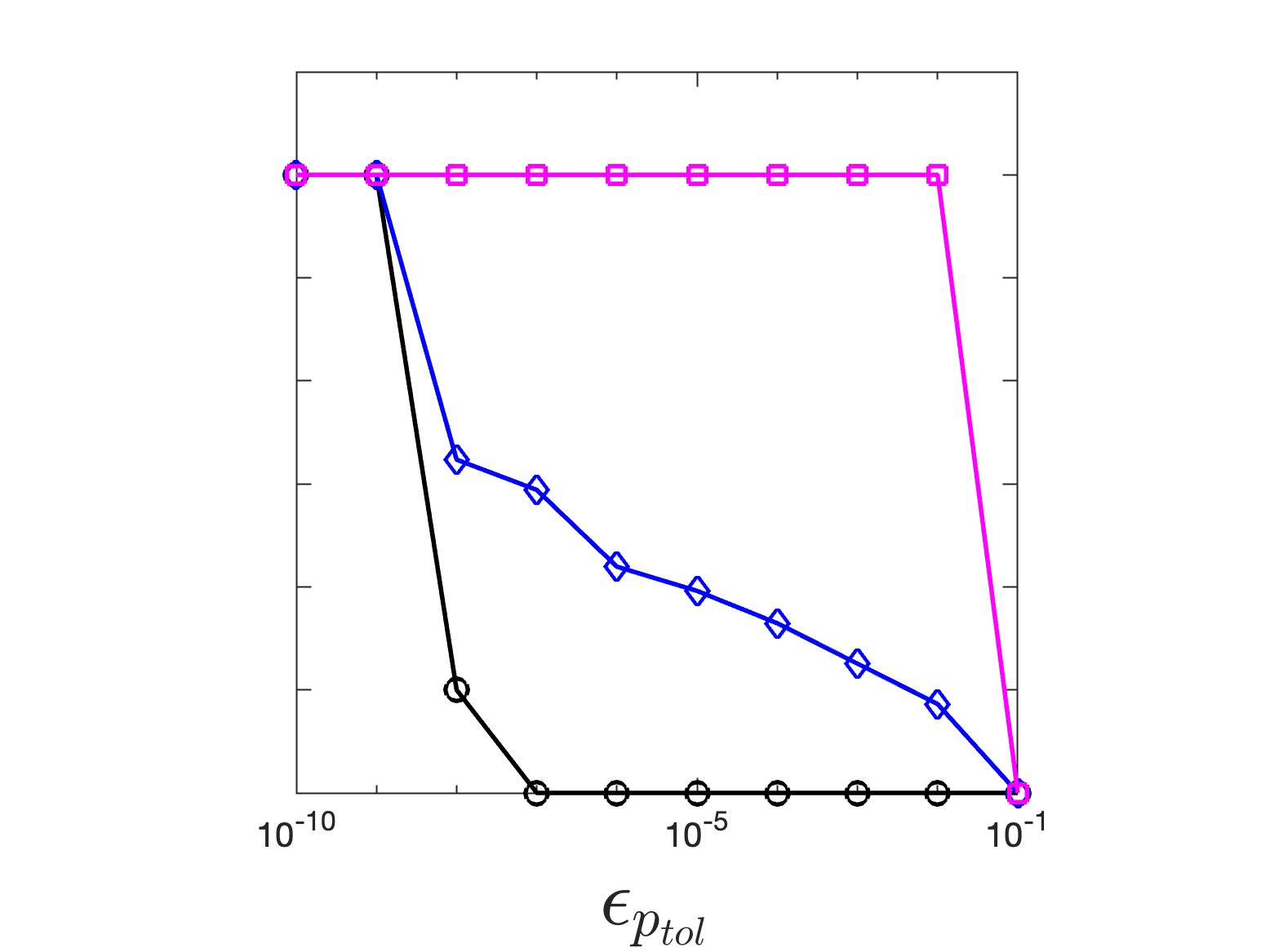}\hspace{-17pt}
		\includegraphics[width=2.5in,trim={0.0cm 0.0cm 1.75cm 0cm},clip]{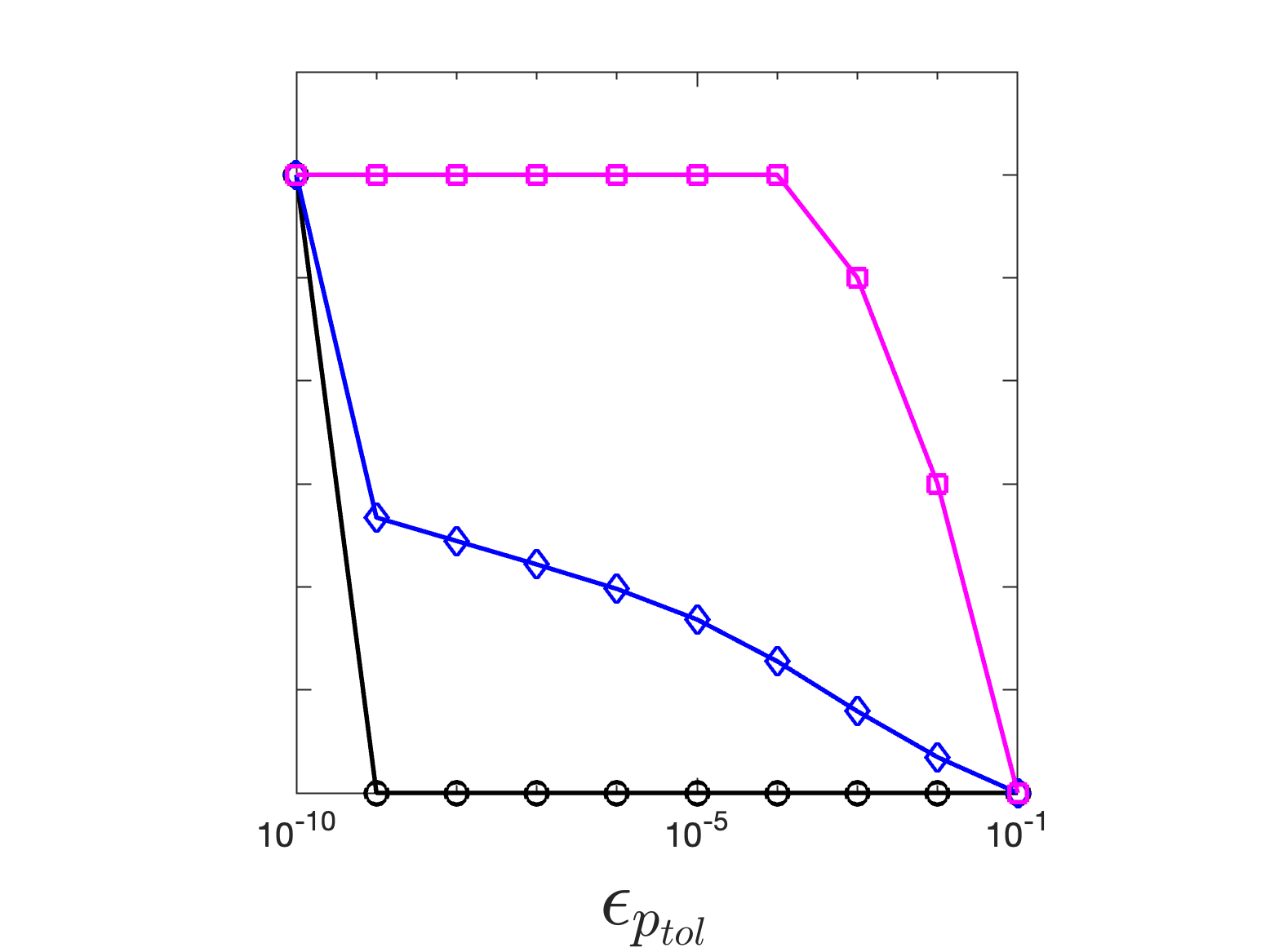}
	\end{adjustbox} 
       \caption{The values of $m$ as a function of the tolerance $\epsilon_{p_{tol}}$ using the $p$-adaptive algorithm for the manufactured solution problem representing the propagation of a pulse. 
       		The left, middle and right plots represent respectively the mesh size $\Delta x = 1/12.5$, $\Delta x = 1/25$ and $\Delta x = 1/50$.}
       \label{fig:p_adapt_1d_val_m}
\end{figure}

Figure~\ref{fig:p_adapt_1d_err} illustrates the error in maximum norm as a function of the tolerance $\epsilon_{p_{tol}}$ for different mesh sizes.
The error diminishes in proportion to the tolerance until it reaches the maximum possible accuracy for the considered mesh size using $m=6$.
In all cases, 
	the $p$-adaptive method produces approximations for which the error in maximum norm is larger than the considered tolerance. 
This indicates that the cutoff criteria \eqref{eq:cutoff_criteria} could be improved and this will be investigated in future work.  

Figure~\ref{fig:p_adapt_1d_val_m} illustrates the maximal, minimal and average values of $m$ as a function of the tolerance $\epsilon_{p_{tol}}$ at the final time.
The average values of $m$ increases as $\epsilon_{p_{tol}}$ diminishes while the minimal value of $m$ is between zero and one until the tolerance gets too close of 
	the maximum accuracy attainable with the considered mesh size.
Moreover, 
	the largest values of $m$ is localized at the pulse as illustrated in Figure~\ref{fig:values_m_1D}, 
	showing that the $p$-adaptive algorithm captures the localized features. 	
 \begin{figure} 
 \centering
	\begin{adjustbox}{max width=1.0\textwidth,center}
	 \centering
		\includegraphics[width=2.5in,trim={1.5cm 0cm 1.75cm 0cm},clip]{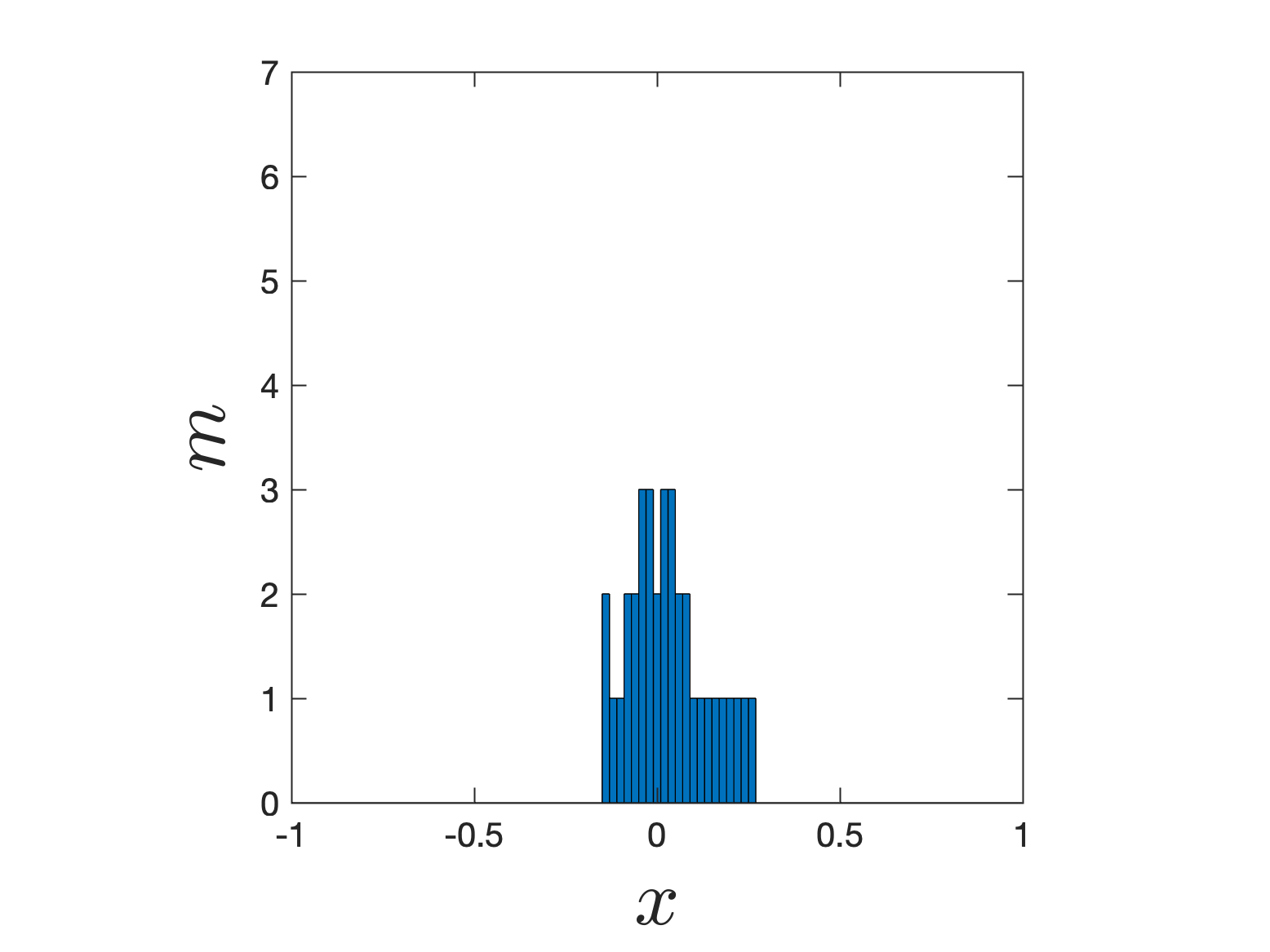}\hspace{-20pt}
	   	\includegraphics[width=2.5in,trim={1.5cm 0cm 1.75cm 0cm},clip]{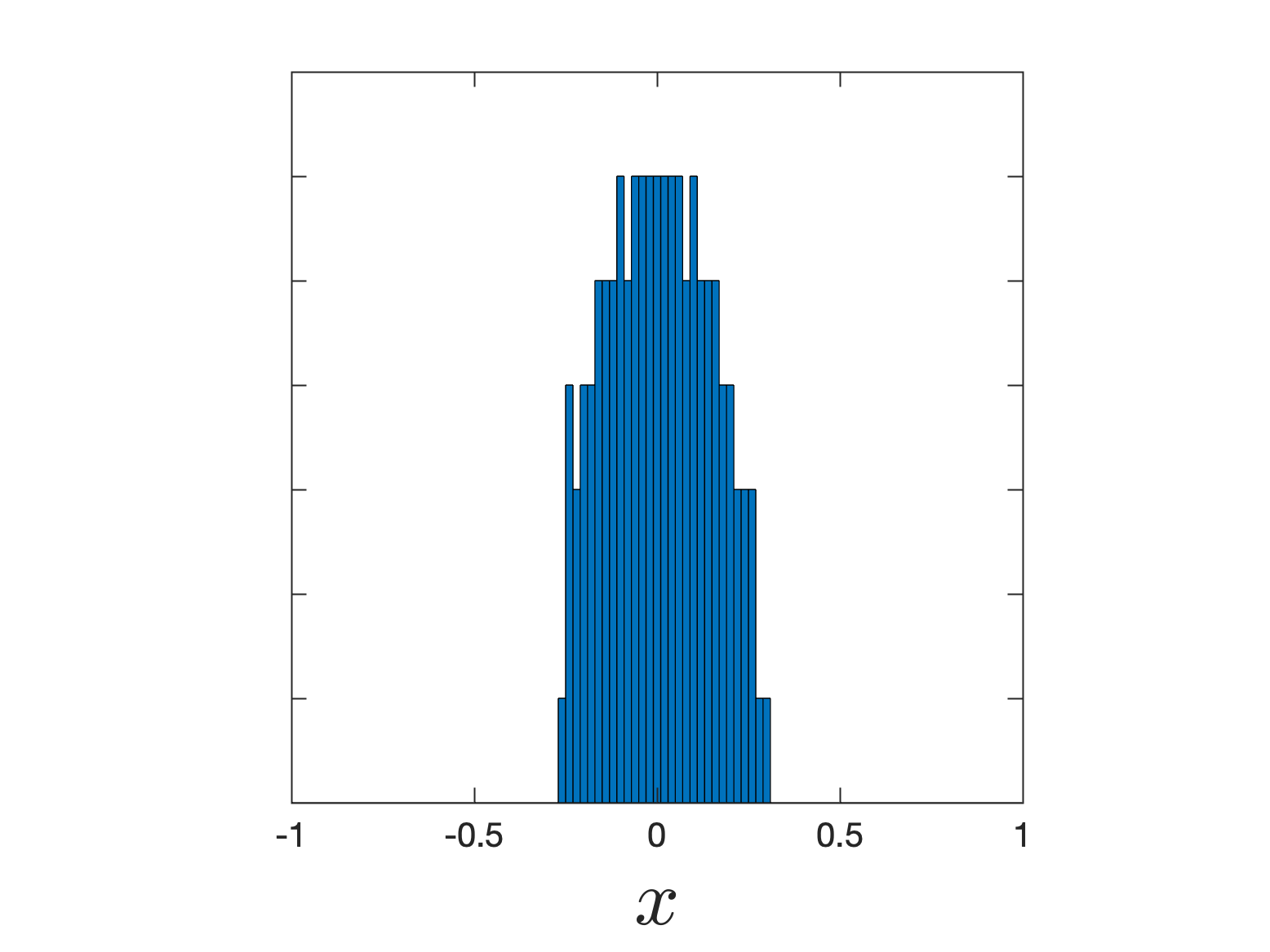}\hspace{-20pt}
	   	\includegraphics[width=2.5in,trim={1.5cm 0cm 1.75cm 0cm},clip]{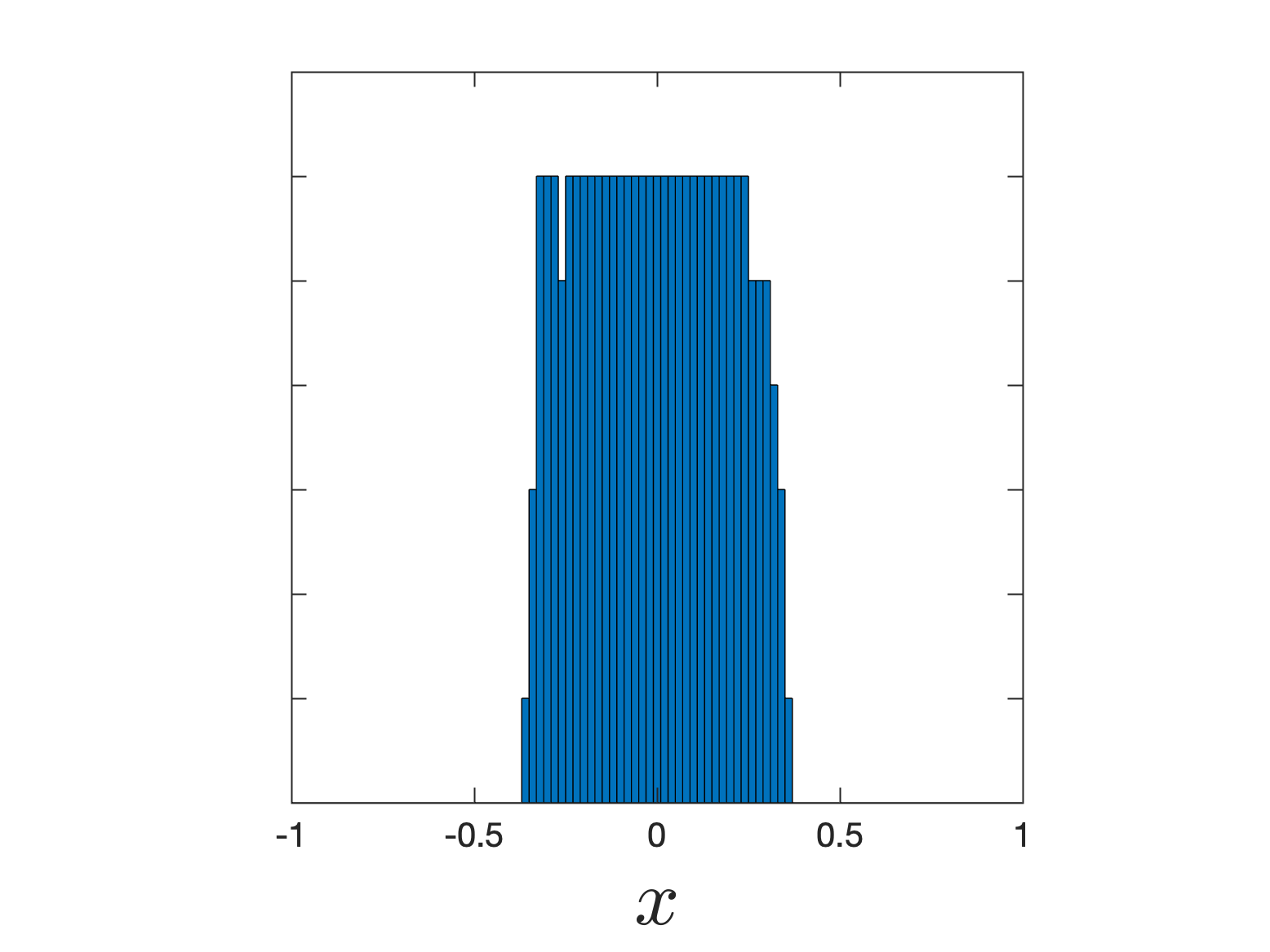} 
	\end{adjustbox}		
  \caption{Values of $m$ as a function of $x$ for different values of tolerance $\epsilon_{p_{tol}}$ using $\Delta x = 1/50$. 
  		The left, 
			middle and right plots are for the tolerances $10^{-2}$ , 
			$10^{-4}$ and $10^{-6}$.}
   \label{fig:values_m_1D}
\end{figure}

Figure~\ref{fig:err_vs_nb_dof_1D} illustrates the error in maximum norm as a function of the number of degrees of freedom. 
The $p$-adaptive method typically requires fewer degrees of freedom than a fixed value of $m$ for the same accuracy.
Since the Hermite method requires $(m+1)^d$ degrees of freedom in $\mathbb{R}^d$ for each node and for each variable, 
	we expect better performance of the $p$-adaptive method in higher dimensions, 
	as demonstrated with the two dimensional examples in the next subsection, 
	and for larger domains. 
 \begin{figure} 
 \centering
	\begin{adjustbox}{max width=1.0\textwidth,center}
	 \centering
		\includegraphics[width=2.5in,trim={1.5cm 0cm 1.75cm 0cm},clip]{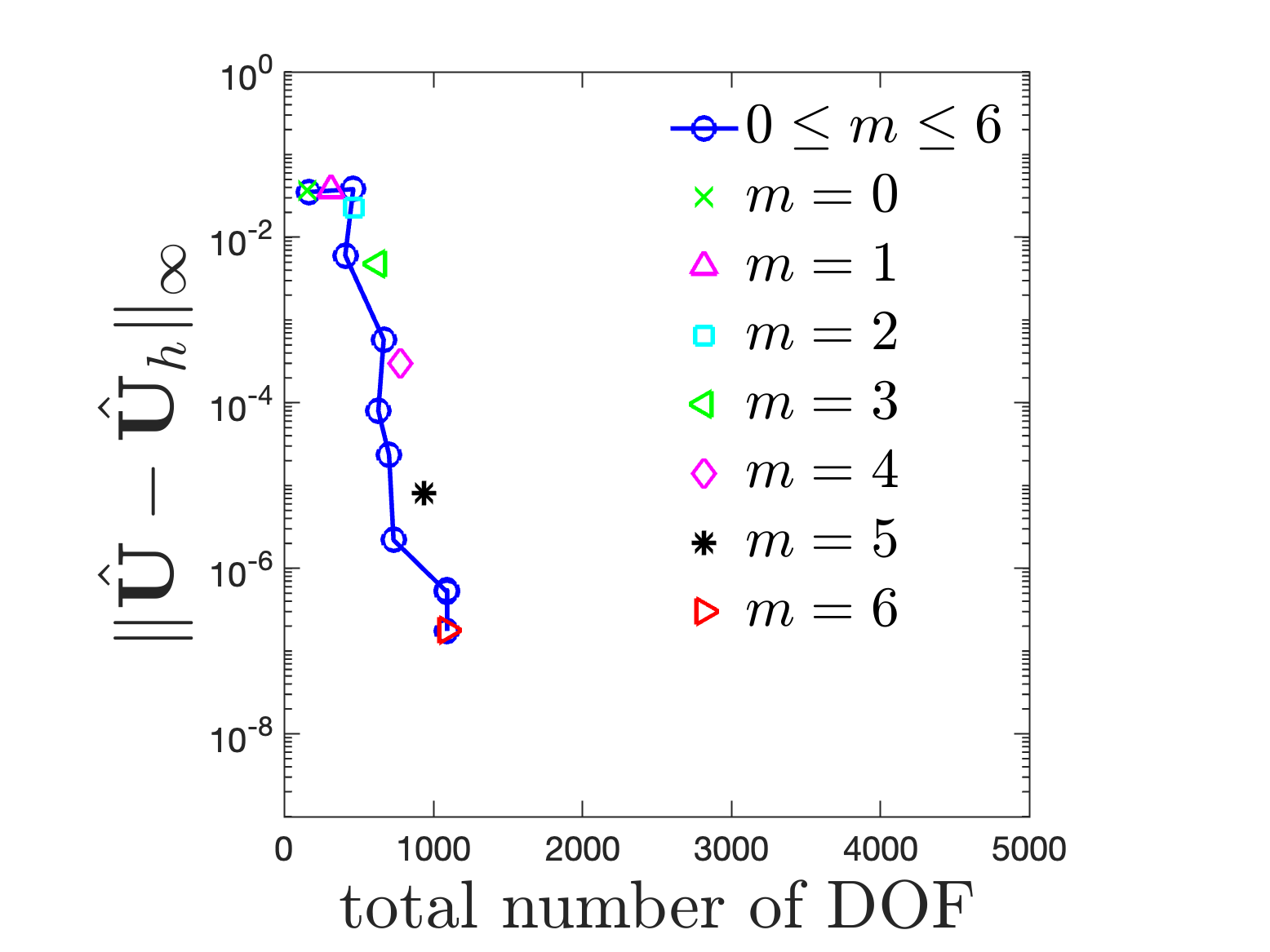}\hspace{-17pt}
	   	\includegraphics[width=2.5in,trim={1.5cm 0cm 1.75cm 0cm},clip]{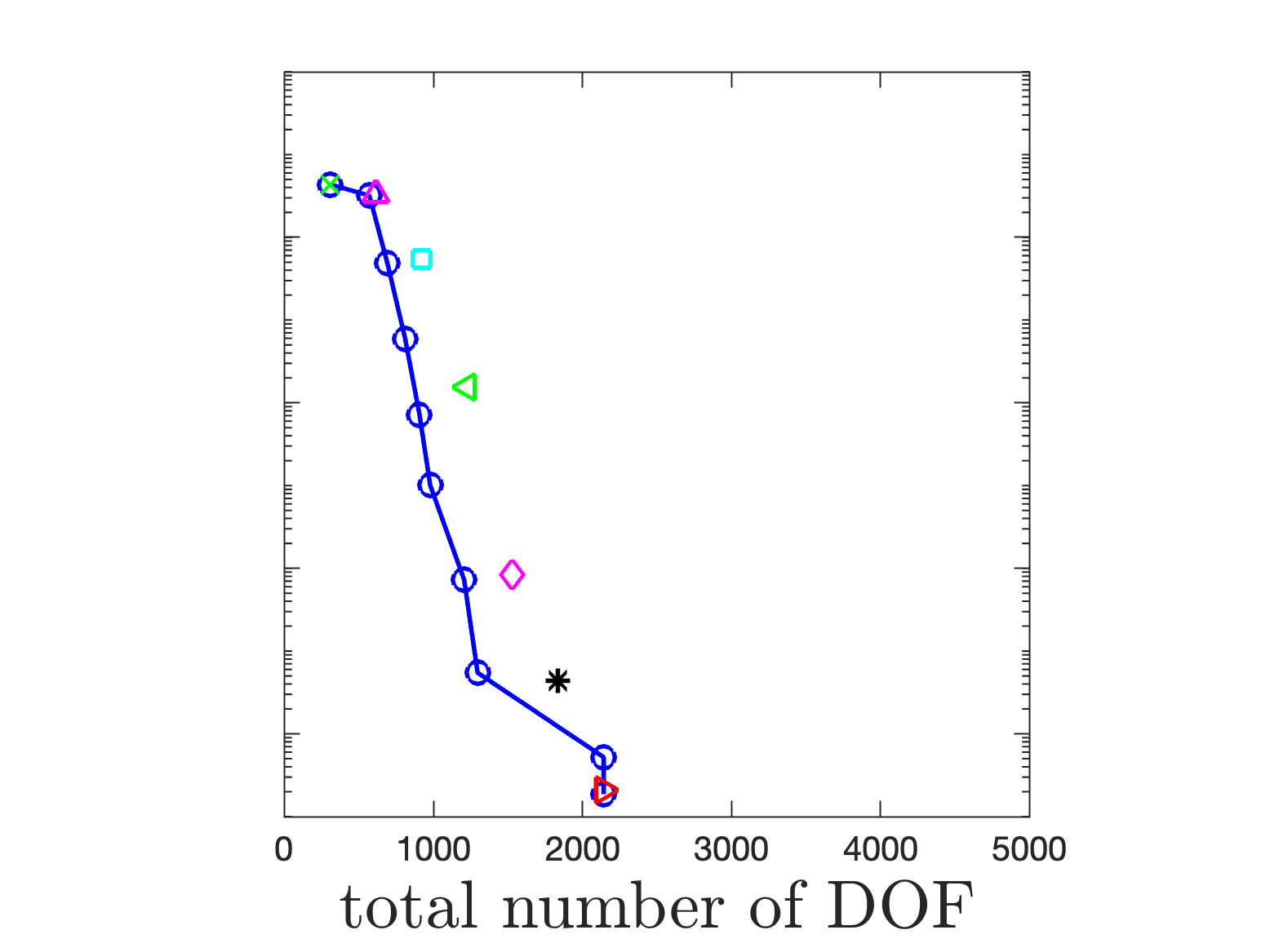}\hspace{-17pt}
	   	\includegraphics[width=2.5in,trim={1.5cm 0cm 1.75cm 0cm},clip]{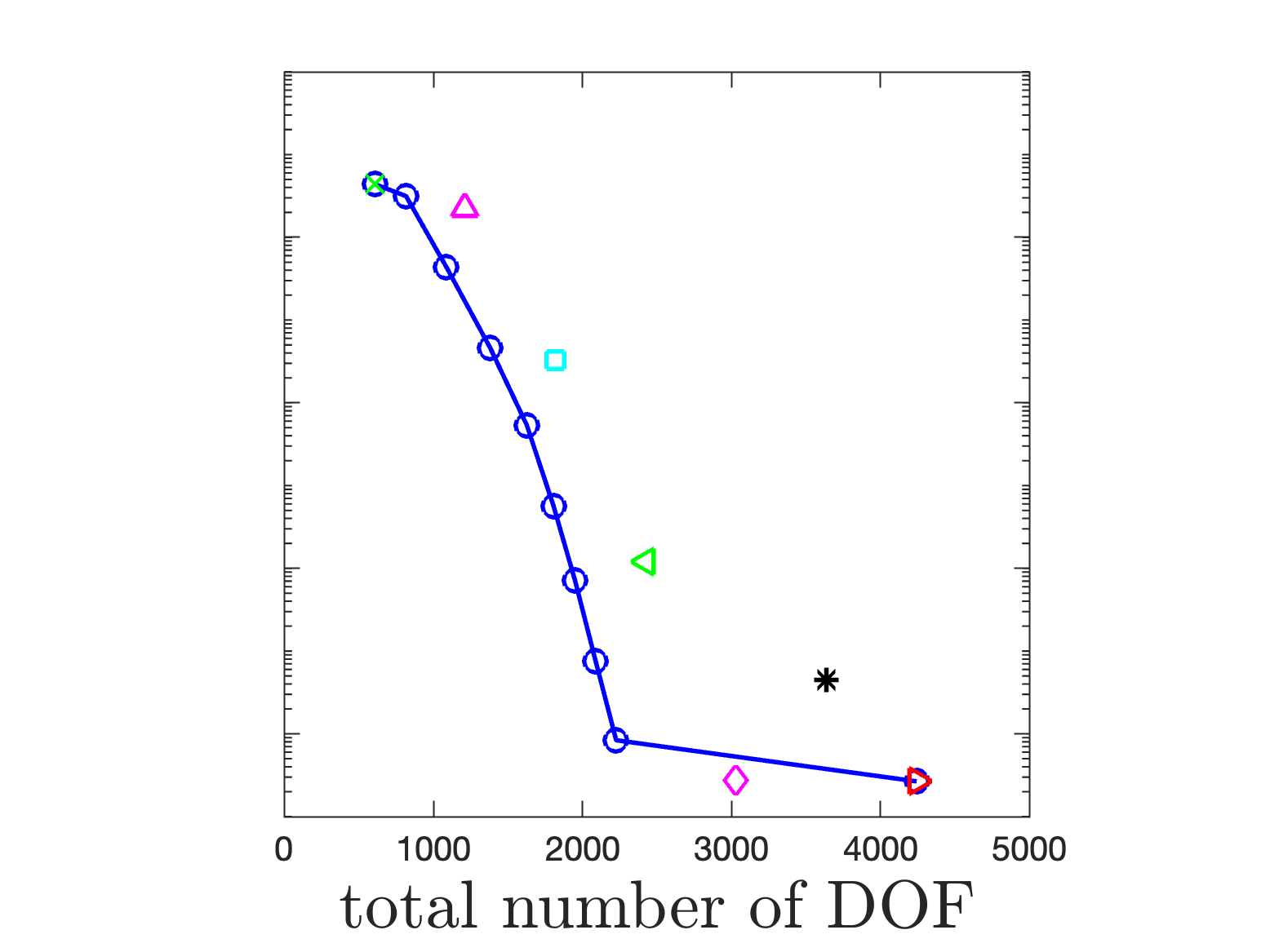} 
	\end{adjustbox}		
  \caption{The error in maximum norm as a function of the total number of degrees of freedom (DOF) using the $p$-adaptive algorithm and various fixed values of $m$ for the manufactured solution problem representing the propagation of a pulse. 
       		The left, middle and right plots represent respectively the mesh size $\Delta x = 1/12.5$, $\Delta x = 1/25$ and $\Delta x = 1/50$.
       		Here $\hat{\mathbf{U}}$ is a vector containing all variables.}
   \label{fig:err_vs_nb_dof_1D}
\end{figure}

The number of time sub-steps,
	given by \eqref{eq:nb_substeps_equation}, 
	is important as the value of $m$ increases. 
In this situation, 
	we investigate the impact of a smaller number of sub-steps on the accuracy. 
We therefore limit the maximum number of time sub-steps to $N_{\tau_{\max}} = 2-5$, 
	significantly reducing the number of time sub-steps for $m=5-6$.  
Note that $N_{\tau_{\max}} = 1$ leads to an unstable method.

Figure~\ref{fig:p_adapt_1d_N_tau_err} illustrates the error in maximum norm as a function of the tolerance for different values of $N_{\tau_{\max}}$. 
In all cases, 
	as the tolerance $\epsilon_{p_{tol}}$ decreases, 
	the error also decreases until it reaches a plateau due to the time error.
Moreover,
	as $N_{\tau_{\max}}$ increases, 
	the value corresponding to the plateau in the error diminishes and the error curve approaches the one obtained with $N_\tau^*$. 
Finally,
	as the mesh size diminishes, 
	we can reach greater accuracy for a given $N_{\tau_{\max}}$.
Note that the choice of $N_{\tau_{\max}}$ does not impact significantly the results for $\epsilon_{p_{tol}} \geq 10^{-5}$.
\begin{figure}   
	\centering
	\begin{adjustbox}{max width=1.0\textwidth,center}
		\includegraphics[width=2.5in,trim={0.0cm 0cm 1.75cm 0cm},clip]{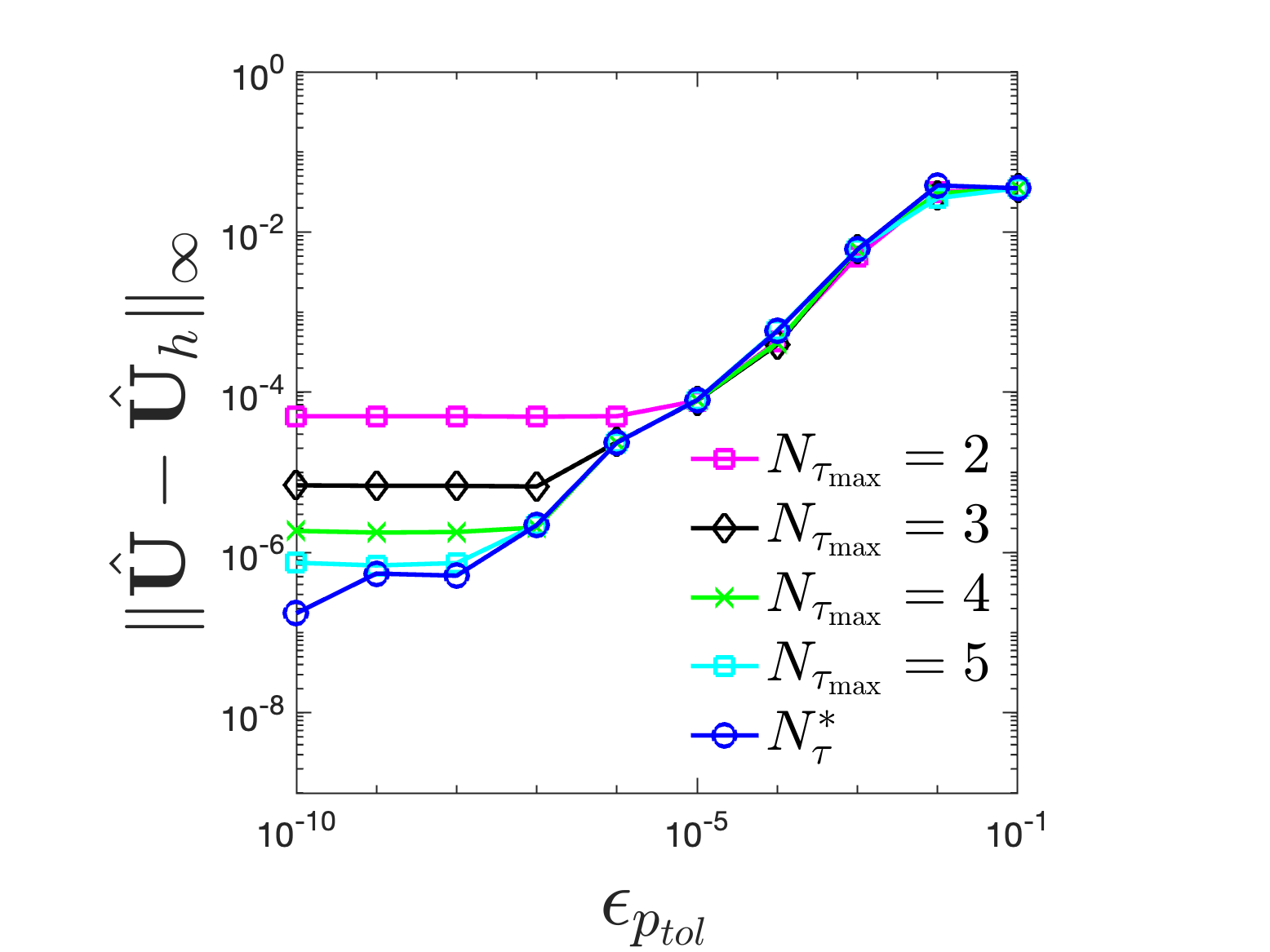} \hspace{-17pt}
		\includegraphics[width=2.5in,trim={0.0cm 0.0cm 1.75cm 0cm},clip]{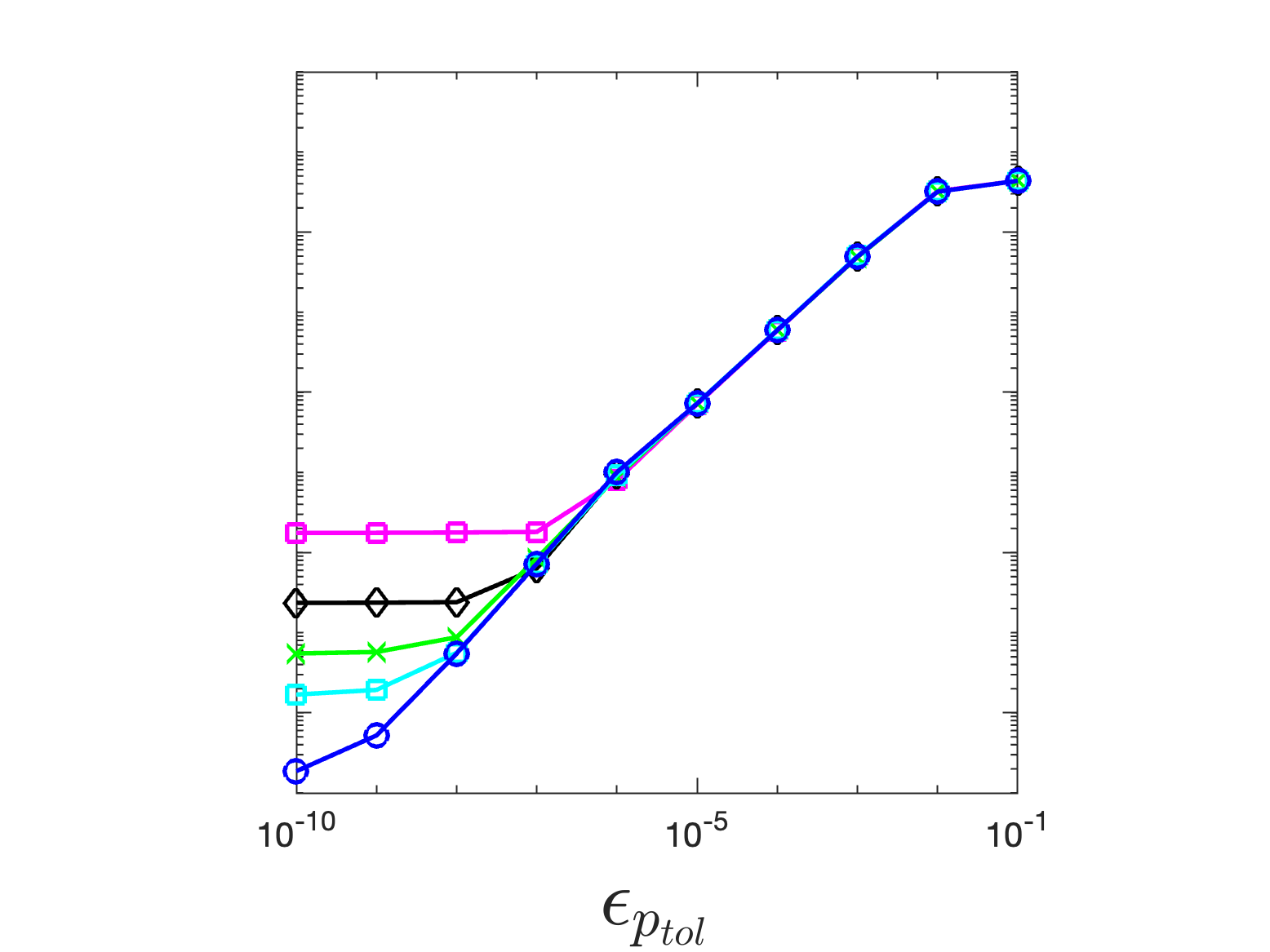}\hspace{-17pt}
		\includegraphics[width=2.5in,trim={0.0cm 0.0cm 1.75cm 0cm},clip]{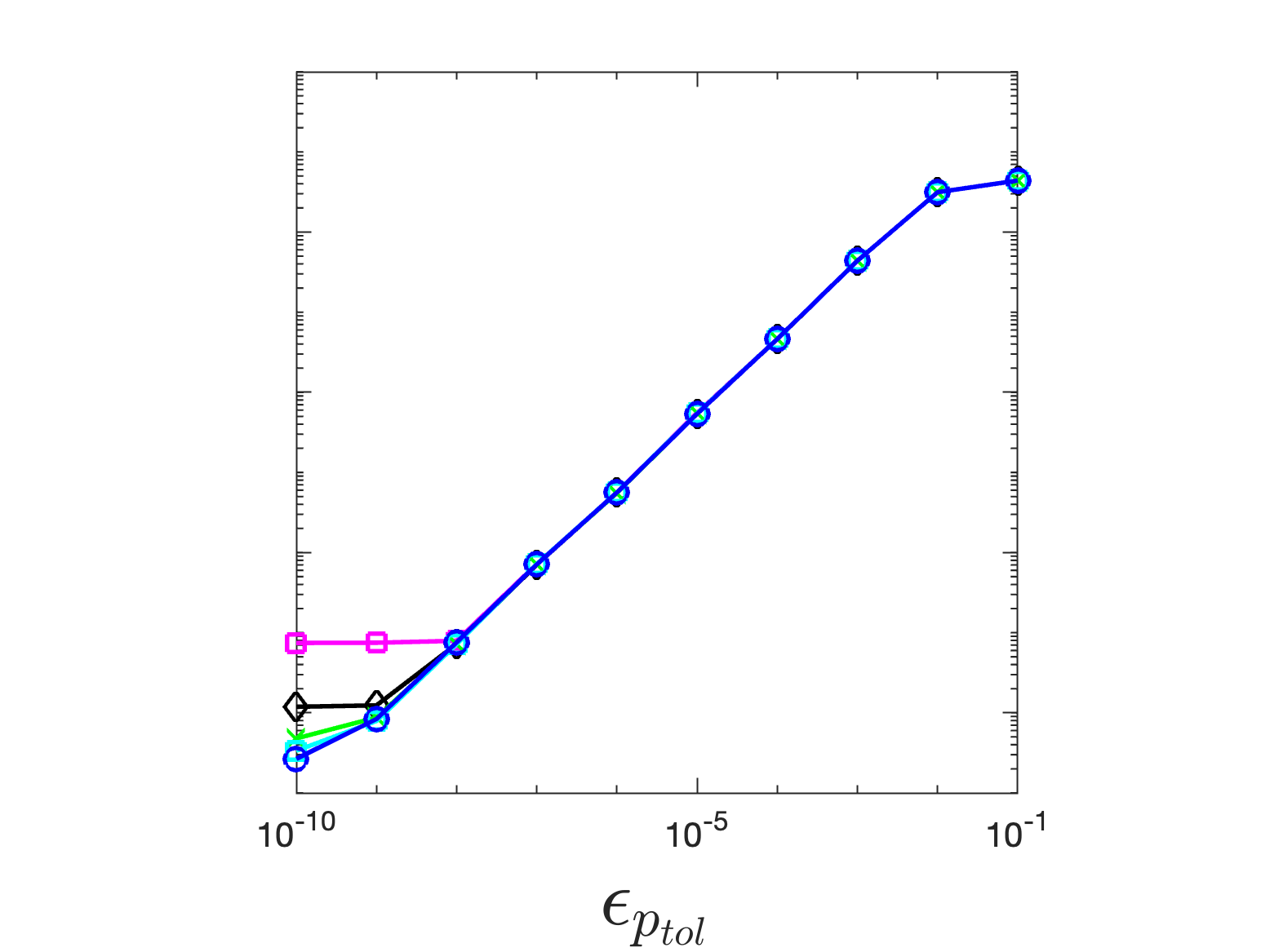}
	\end{adjustbox} 
       \caption{The error in maximum norm as a function of the tolerance $\epsilon_{p_{tol}}$ using different $N_{\tau_{\max}}$ and the $p$-adaptive algorithm for the manufactured solution problem representing the propagation of a pulse. 
       		The left, middle and right plots represent respectively the mesh size $\Delta x = 1/12.5$, $\Delta x = 1/25$ and $\Delta x = 1/50$.
       		Here $\hat{\mathbf{U}}$ is a vector containing all variables.}
       \label{fig:p_adapt_1d_N_tau_err}
\end{figure}
\begin{figure}   
	\centering
	\begin{adjustbox}{max width=1.0\textwidth,center}
		\includegraphics[width=2.5in,trim={0.0cm 0cm 1.75cm 0cm},clip]{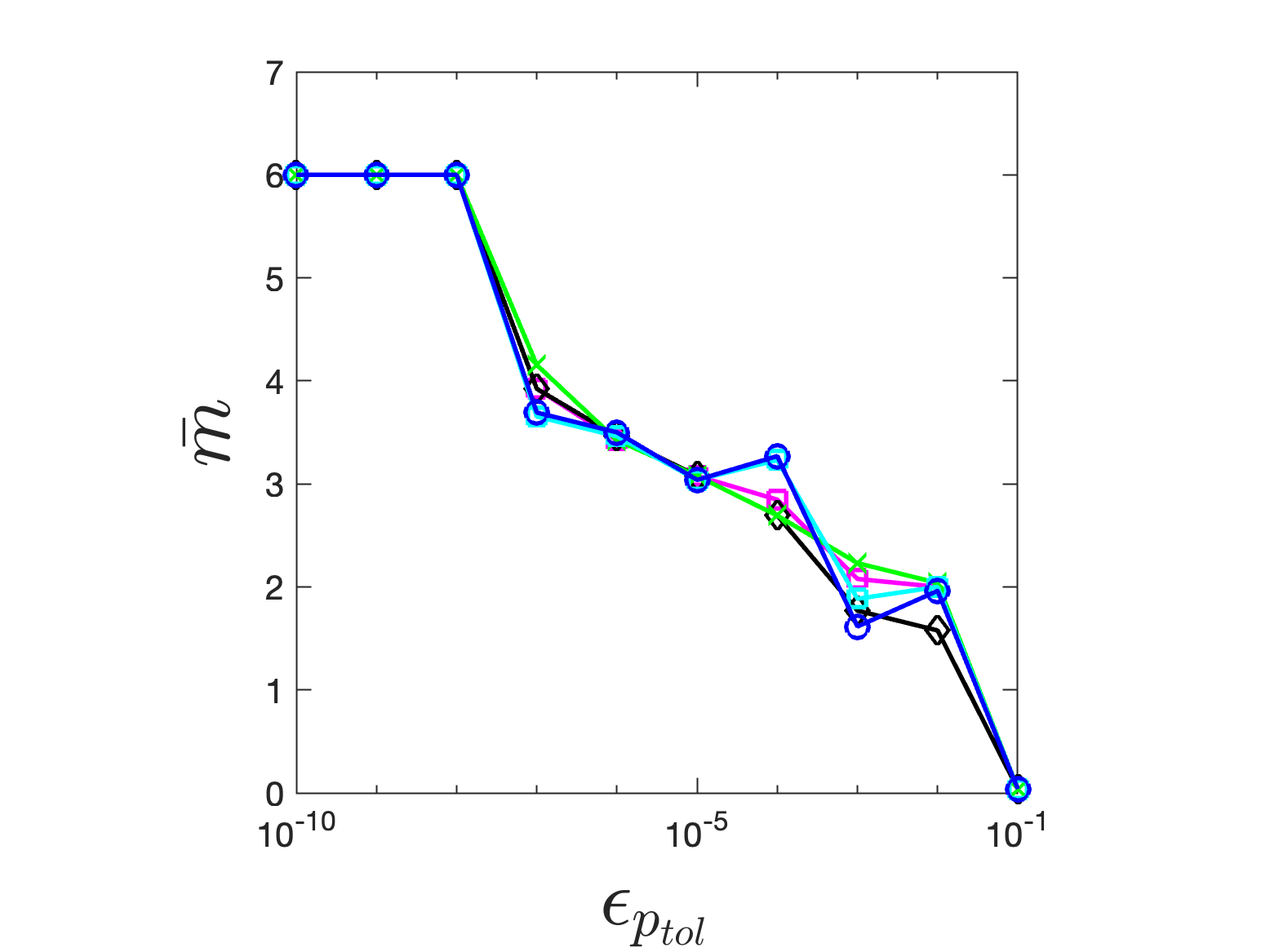} \hspace{-17pt}
		\includegraphics[width=2.5in,trim={0.0cm 0.0cm 1.75cm 0cm},clip]{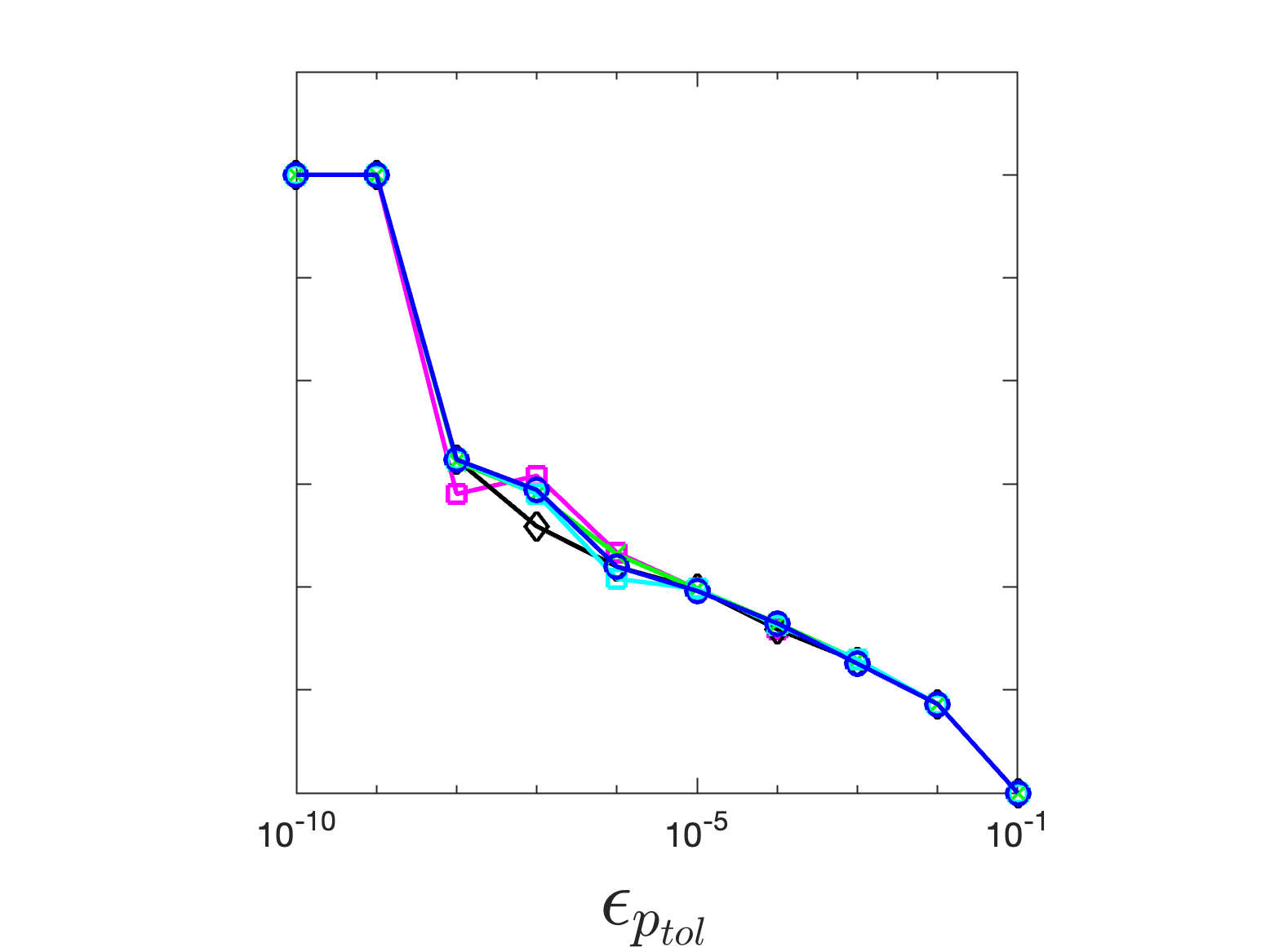}\hspace{-17pt}
		\includegraphics[width=2.5in,trim={0.0cm 0.0cm 1.75cm 0cm},clip]{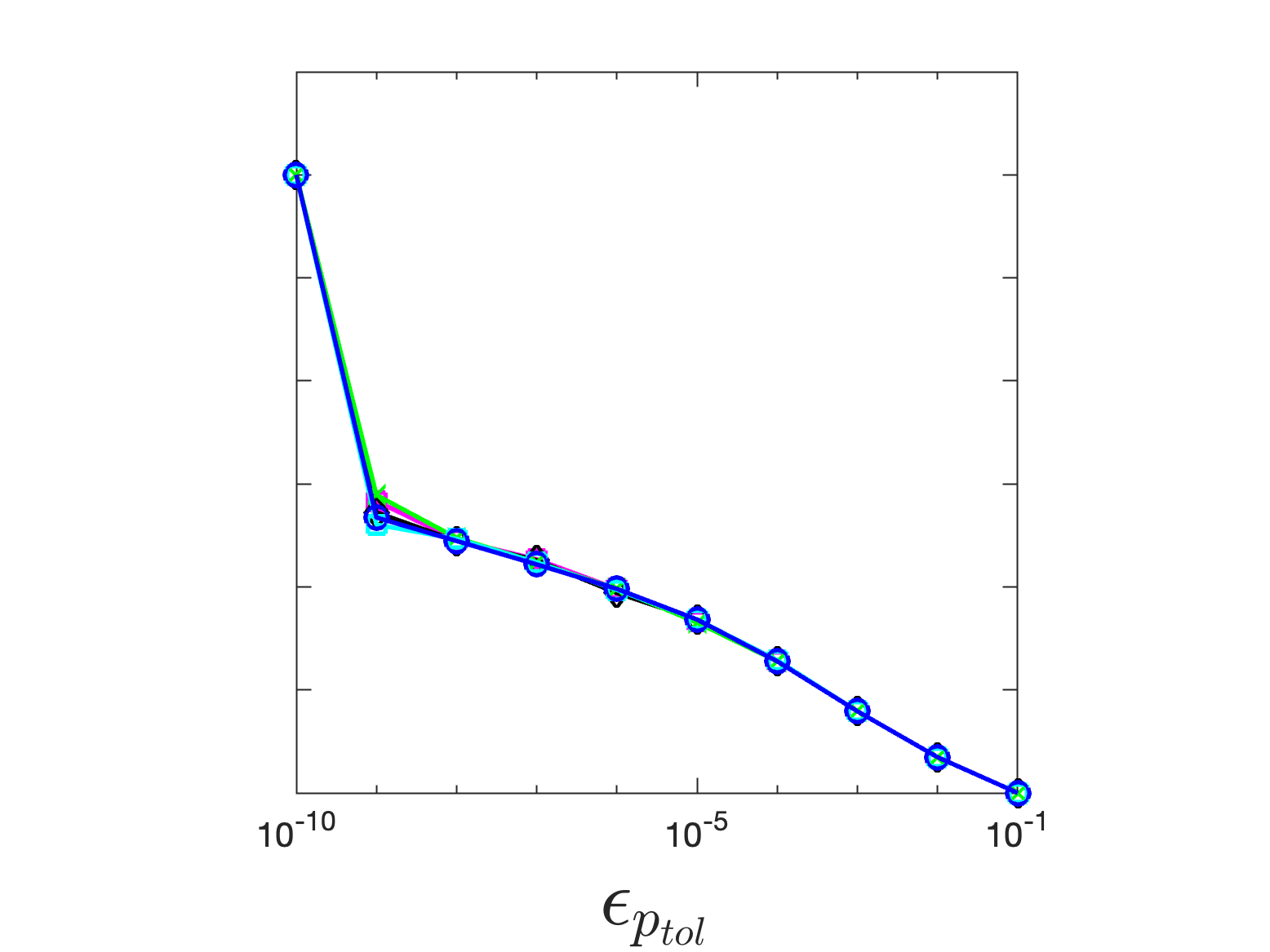}
	\end{adjustbox} 
       \caption{The average value of $m$ as a function of the tolerance $\epsilon_{p_{tol}}$ using different $N_{\tau_{\max}}$ and the $p$-adaptive algorithm for the manufactured solution problem representing the propagation of a pulse. 
       		The left, middle and right plots represent respectively the mesh size $\Delta x = 1/12.5$, $\Delta x = 1/25$ and $\Delta x = 1/50$.}
       \label{fig:p_adapt_1d_N_tau_average_m}
\end{figure}
Figure~\ref{fig:p_adapt_1d_N_tau_average_m} illustrates the average value of $m$ as a function of the tolerance. 
The average value of $m$ remains roughly the same for all cases, 
	showing that the choice of $N_{\tau_{\max}}$ does not impact significantly the order adaptive algorithm. 
Note that the maximal value of $m$ remains the same for all considered $N_{\tau_{\max}}$, 
	while the minimal value varies slightly for $\Delta x = 1/12.5$ and $\Delta x =1/25$.

As a last example in one space dimension, 
	we consider a pulse as the initial condition for the electric field, 
	that is 
\begin{equation} \label{eq:cnd_initial_1d}
	\hat{E}(x,0) = A \,\mbox{sech}(x)\cos(\Omega_0 x),
\end{equation}
	and all the other variables are set to zero. 
Here $\Omega_0 = 12.57$ and the physical parameters are $\mu=1$, 
	$\epsilon = 1$,
	$\epsilon_\infty = 2.25$, 
	$a=0.07$,
	$\omega_0 = 5.84$, 
	$\omega_p = \omega_0\sqrt{5.25-\epsilon_\infty}$,
	$\omega_v = 1.28$, 
	$\gamma = 1.1685\times10^{-5}$,
	$\gamma_v = 29.2/32$ and $\theta = 0.3$. 
Note that these are the same parameters used in \cite{Gilles2000,Bokil2017} to generate solitons, 
	however they use the equation \eqref{eq:cnd_initial_1d} as a function in time for the boundary condition of the electric field. 
	
The domain $\Omega = [-100,100]$ and the time interval is $I=[0,200]$.
We set $\epsilon_{p_{tol}} = 10^{-4}$, 
	$N_{\tau_{\max}} = 3$, 
	$0\leq m \leq 6$, 
	$\Delta x = 1/25$ for a total of 5000 cells and $\Delta t = \Delta x/2$. 
Figure~\ref{fig:gaussian_pulse_A_1} and Figure~\ref{fig:gaussian_pulse_A_2} illustrate the electric field and the distribution of $m$ for $x\in[0,100]$ at different times, 
    with respectively $A=1$ and $A=2$. 
Although we do not expect to have the same results as in \cite{Gilles2000,Bokil2017} in the current setting, 
	we can still observe a daughter pulse that travels ahead of the main pulse.  
Note that the pulses are well captured by the $p$-adaptive method.
Finally,
	Figure~\ref{fig:energy_1d} illustrates the evolution of the energy $\mathcal{E}$ in time. 
As expected, 
	the energy dissipates with time, 
	suggesting that the method is stable. 

\begin{figure}   
	\centering
	\begin{adjustbox}{max width=1.0\textwidth,center}
		\includegraphics[width=2.5in,trim={0.0cm 0cm 1.75cm 0cm},clip]{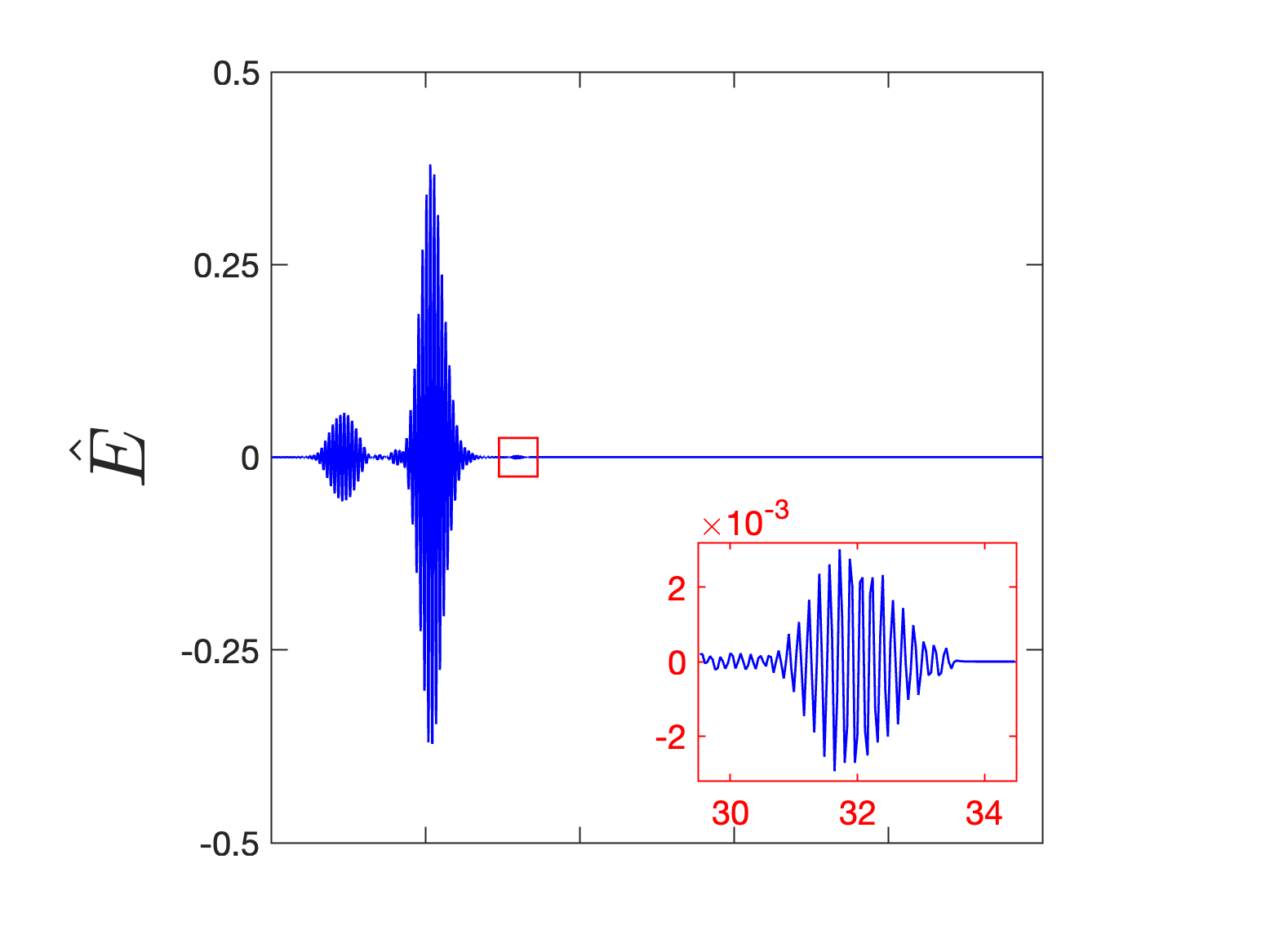} \hspace{-17pt}
		\includegraphics[width=2.5in,trim={0.0cm 0.0cm 1.75cm 0cm},clip]{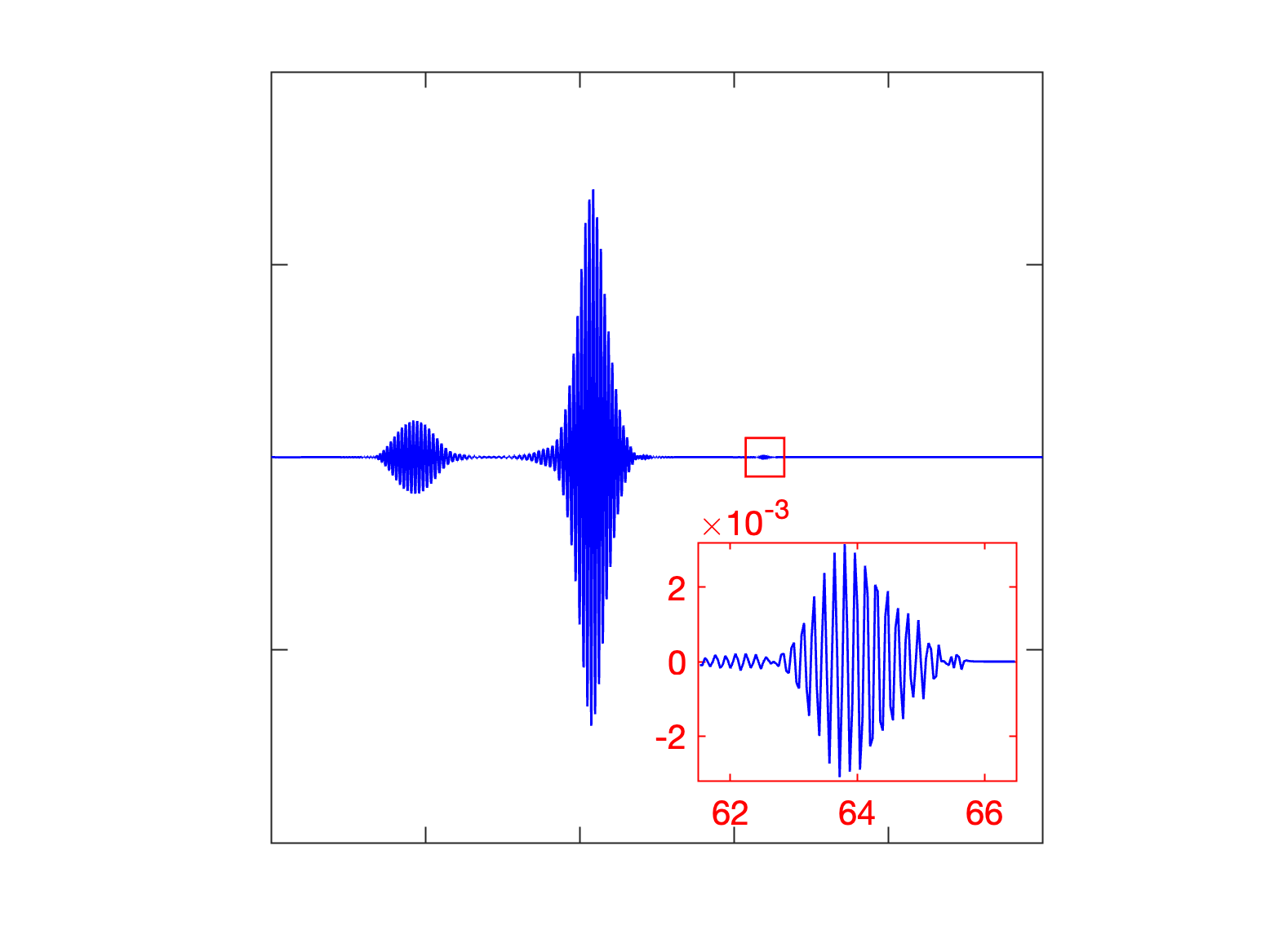}\hspace{-17pt}
		\includegraphics[width=2.5in,trim={0.0cm 0.0cm 1.75cm 0cm},clip]{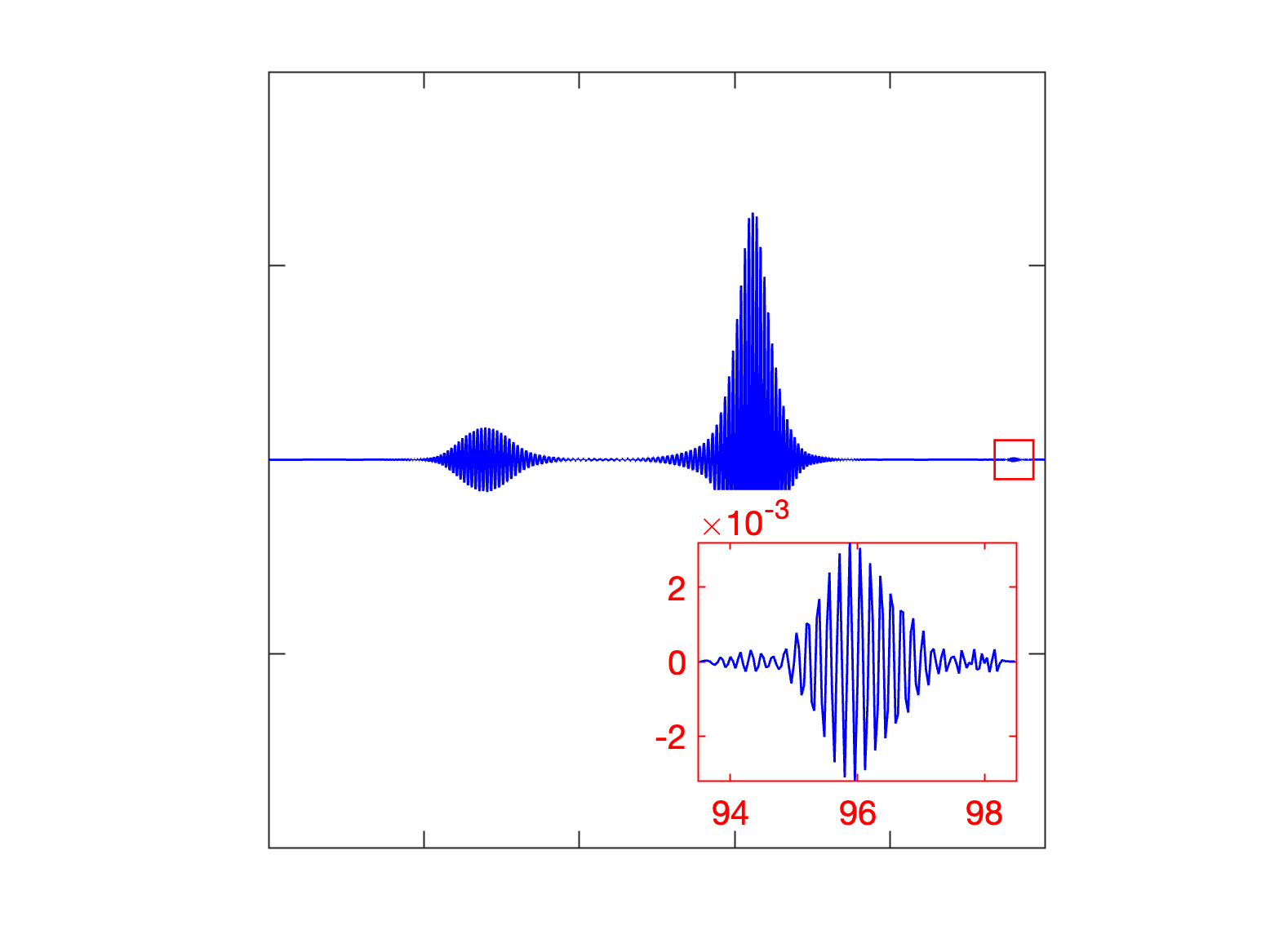}
	\end{adjustbox} 
	\begin{adjustbox}{max width=1.0\textwidth,center}
		\includegraphics[width=2.5in,trim={0.0cm 0cm 1.75cm 0cm},clip]{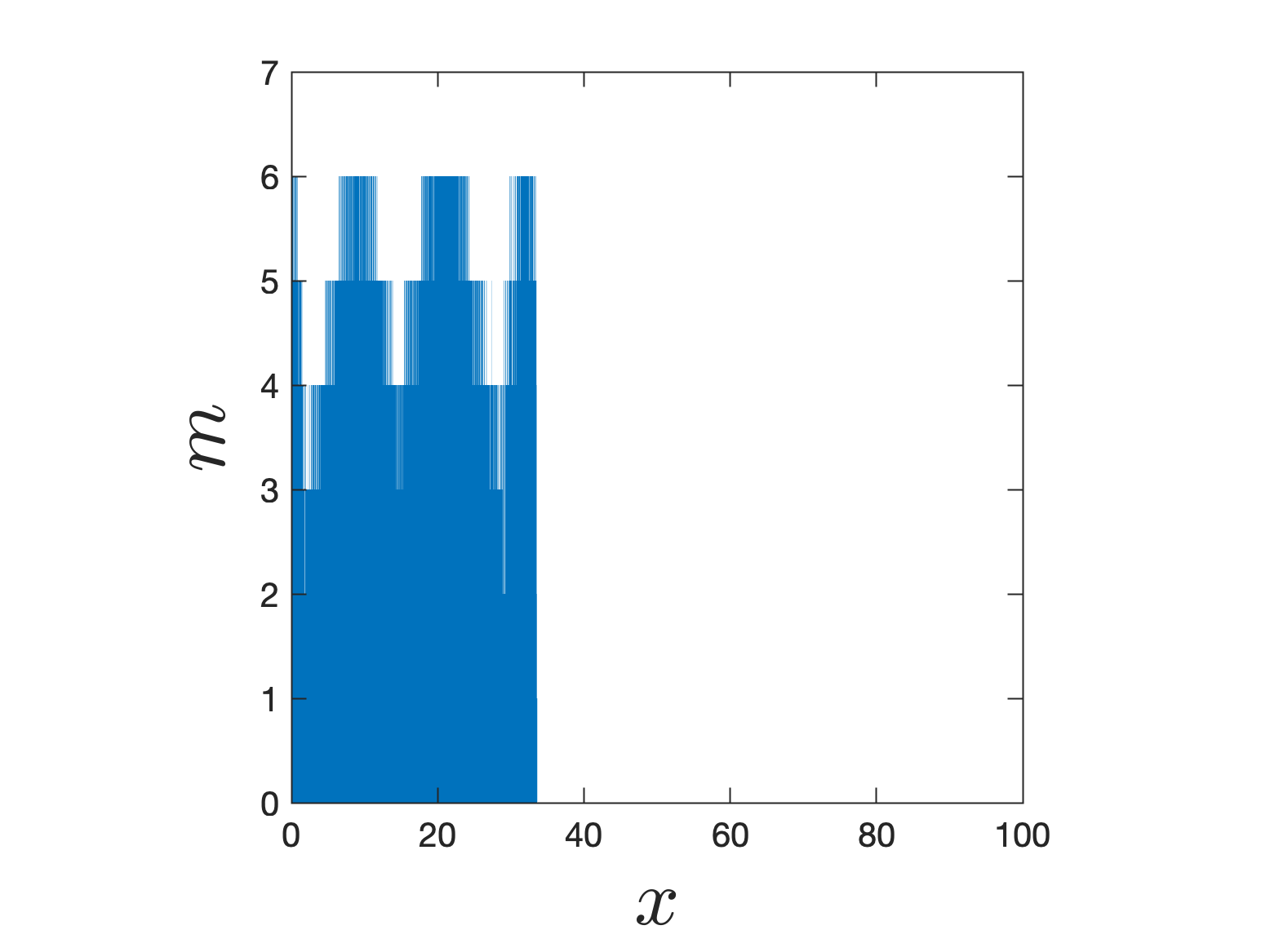} \hspace{-17pt}
		\includegraphics[width=2.5in,trim={0.0cm 0.0cm 1.75cm 0cm},clip]{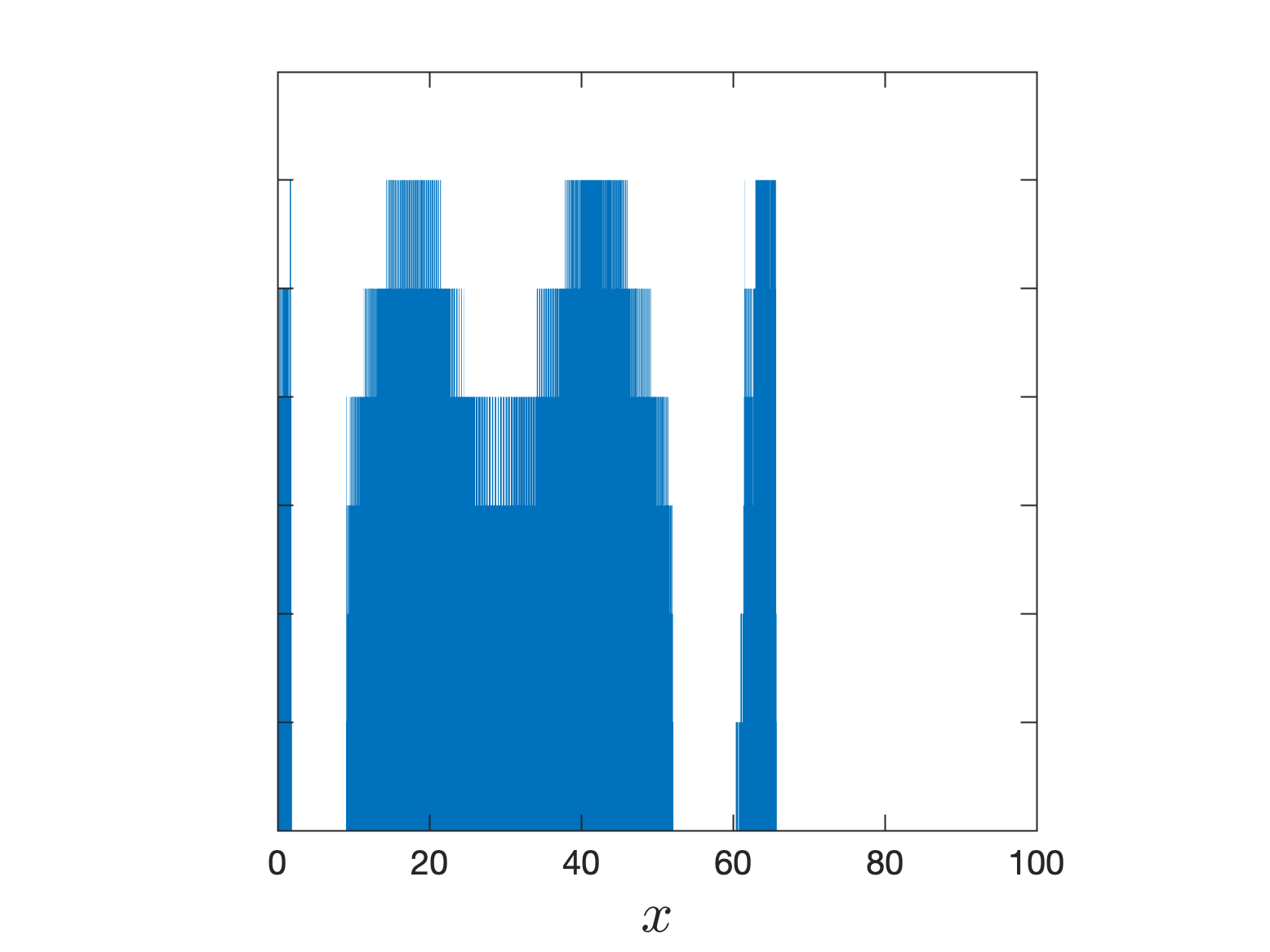}\hspace{-17pt}
		\includegraphics[width=2.5in,trim={0.0cm 0.0cm 1.75cm 0cm},clip]{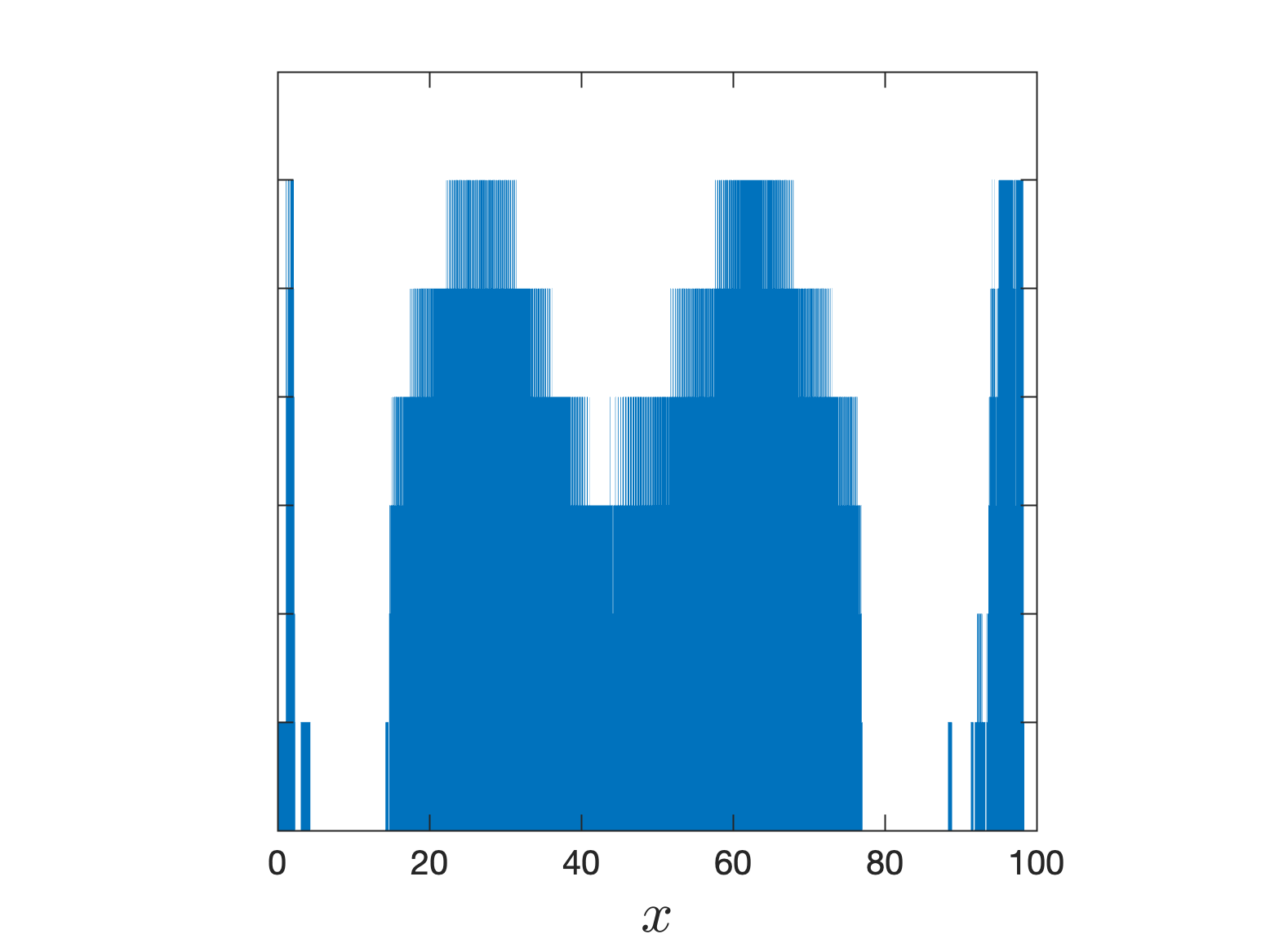}
	\end{adjustbox} 
       \caption{The electric field and the distribution of $m$ as a function of $x$ for different times using $A=1$. 
       		The first, second and third columns represent respectively the times 50, 100 and 150.
            For clarity, 
            we only show the solutions for $x\geq0$ since they are symmetric about the $y$-axis.}
       \label{fig:gaussian_pulse_A_1}
\end{figure}
\begin{figure}   
	\centering
	\begin{adjustbox}{max width=1.0\textwidth,center}
		\includegraphics[width=2.5in,trim={0.0cm 0cm 1.75cm 0cm},clip]{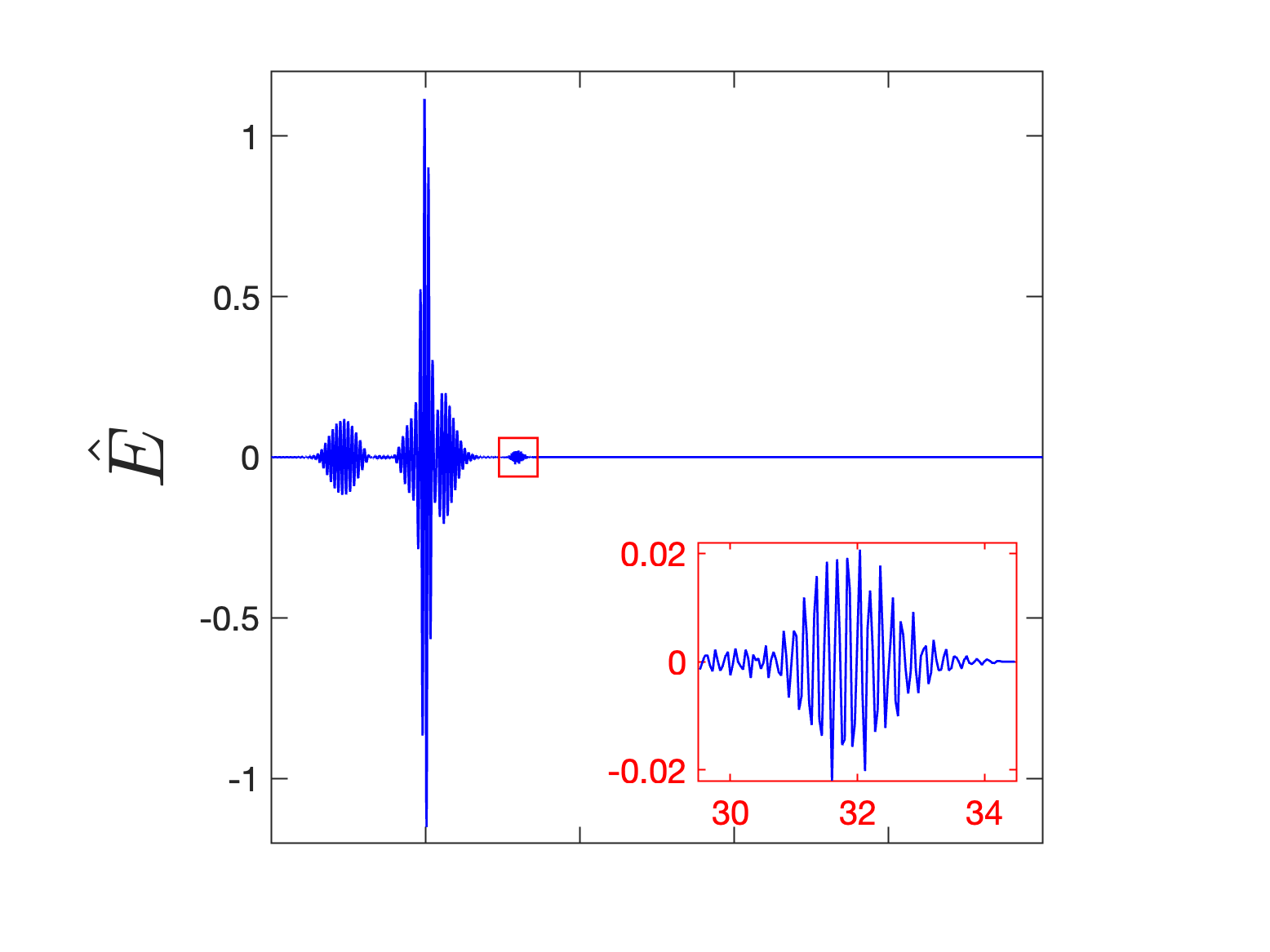} \hspace{-17pt}
		\includegraphics[width=2.5in,trim={0.0cm 0.0cm 1.75cm 0cm},clip]{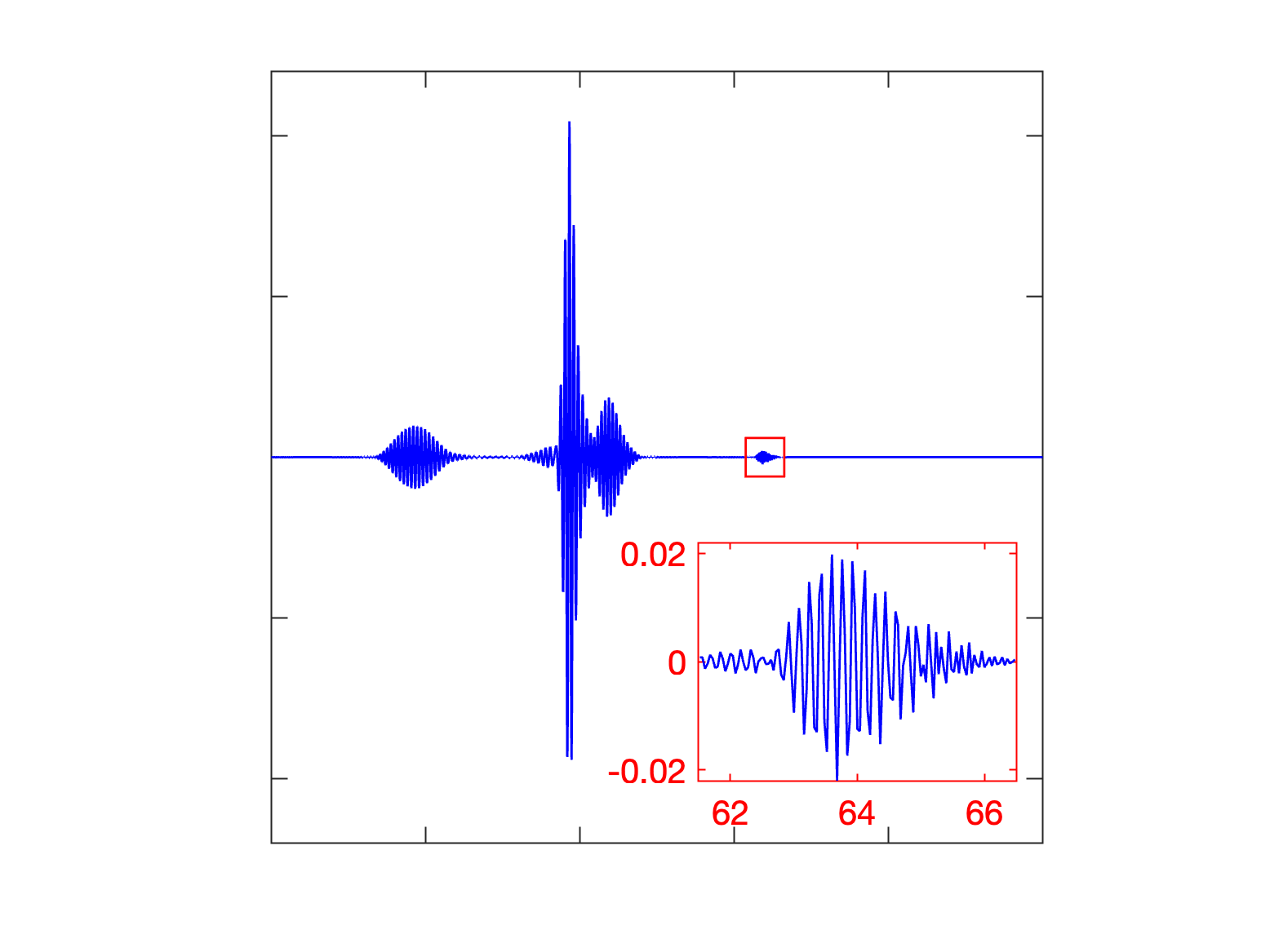}\hspace{-17pt}
		\includegraphics[width=2.5in,trim={0.0cm 0.0cm 1.75cm 0cm},clip]{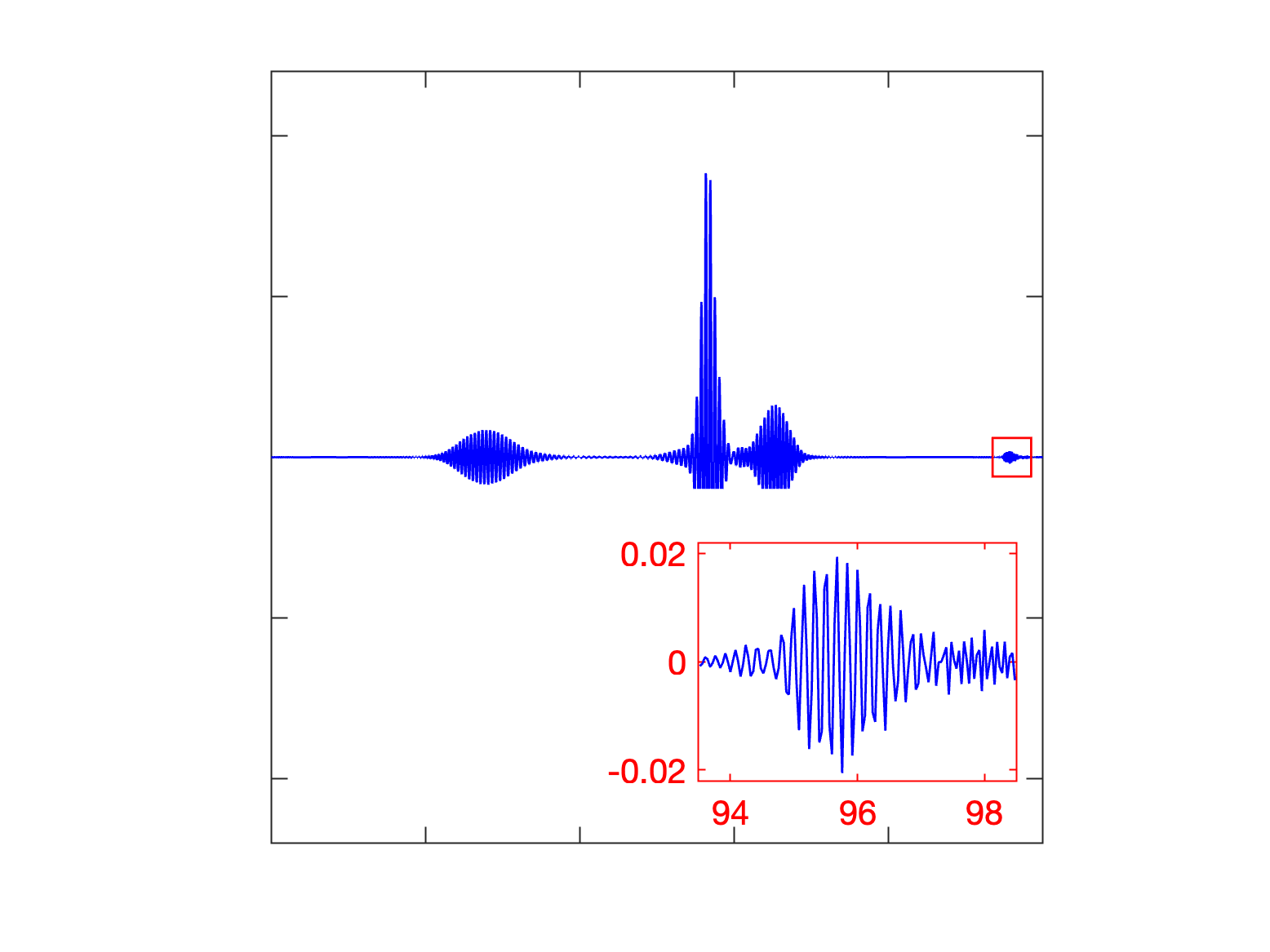}
	\end{adjustbox} 
	\begin{adjustbox}{max width=1.0\textwidth,center}
		\includegraphics[width=2.5in,trim={0.0cm 0cm 1.75cm 0cm},clip]{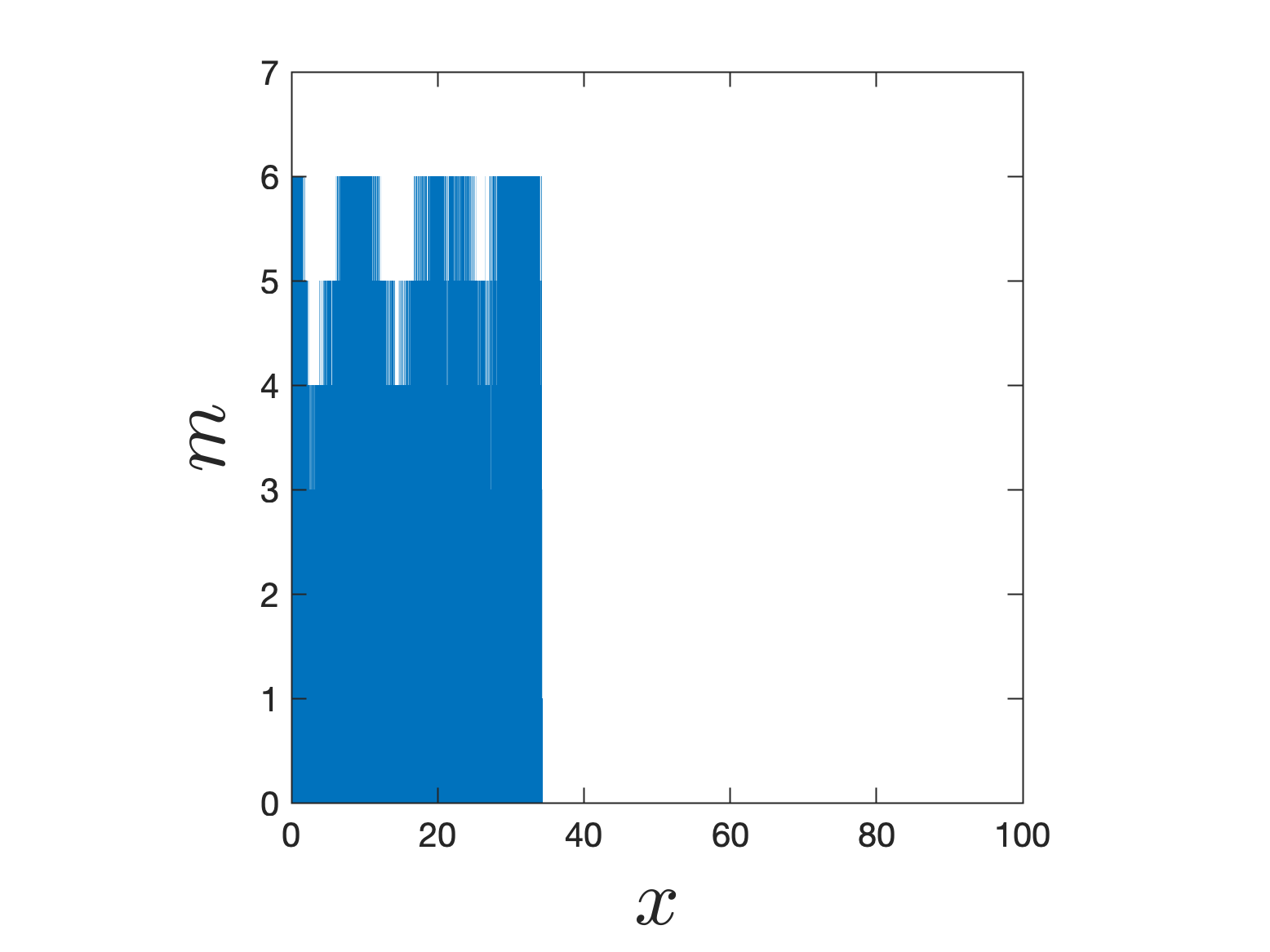} \hspace{-17pt}
		\includegraphics[width=2.5in,trim={0.0cm 0.0cm 1.75cm 0cm},clip]{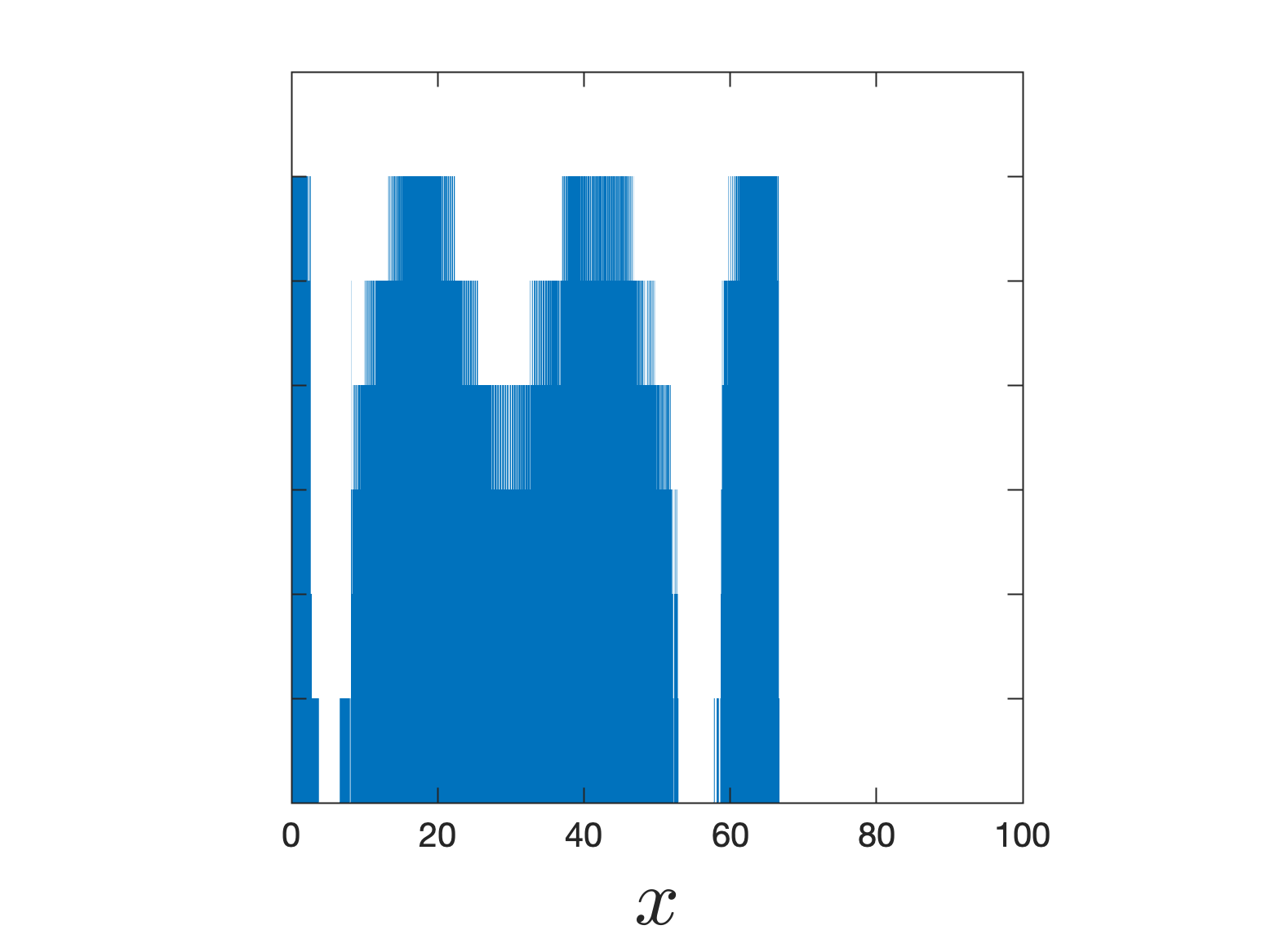}\hspace{-17pt}
		\includegraphics[width=2.5in,trim={0.0cm 0.0cm 1.75cm 0cm},clip]{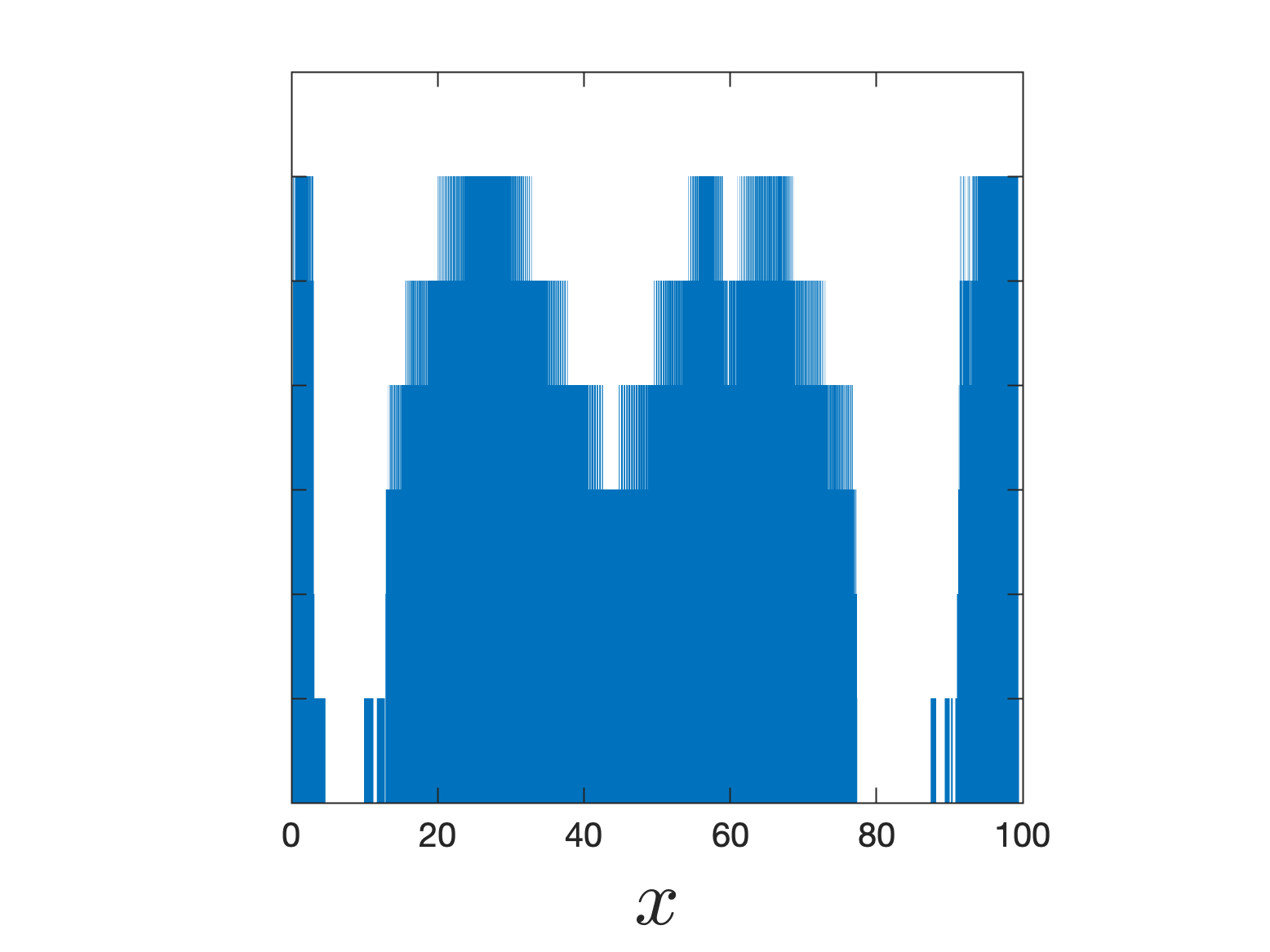}
	\end{adjustbox} 
       \caption{The electric field and the distribution of $m$ as a function of $x$ for different times using $A=2$. 
       		The first, second and third columns represent respectively the times 50, 100 and 150.
            For clarity, 
            we only show the solutions for $x\geq0$ since they are symmetric about the $y$-axis.}
       \label{fig:gaussian_pulse_A_2}
\end{figure}
\begin{figure}   
	\centering
	\begin{adjustbox}{max width=1.0\textwidth,center}
		\includegraphics[width=2.5in,trim={0.0cm 0cm 1.75cm 0cm},clip]{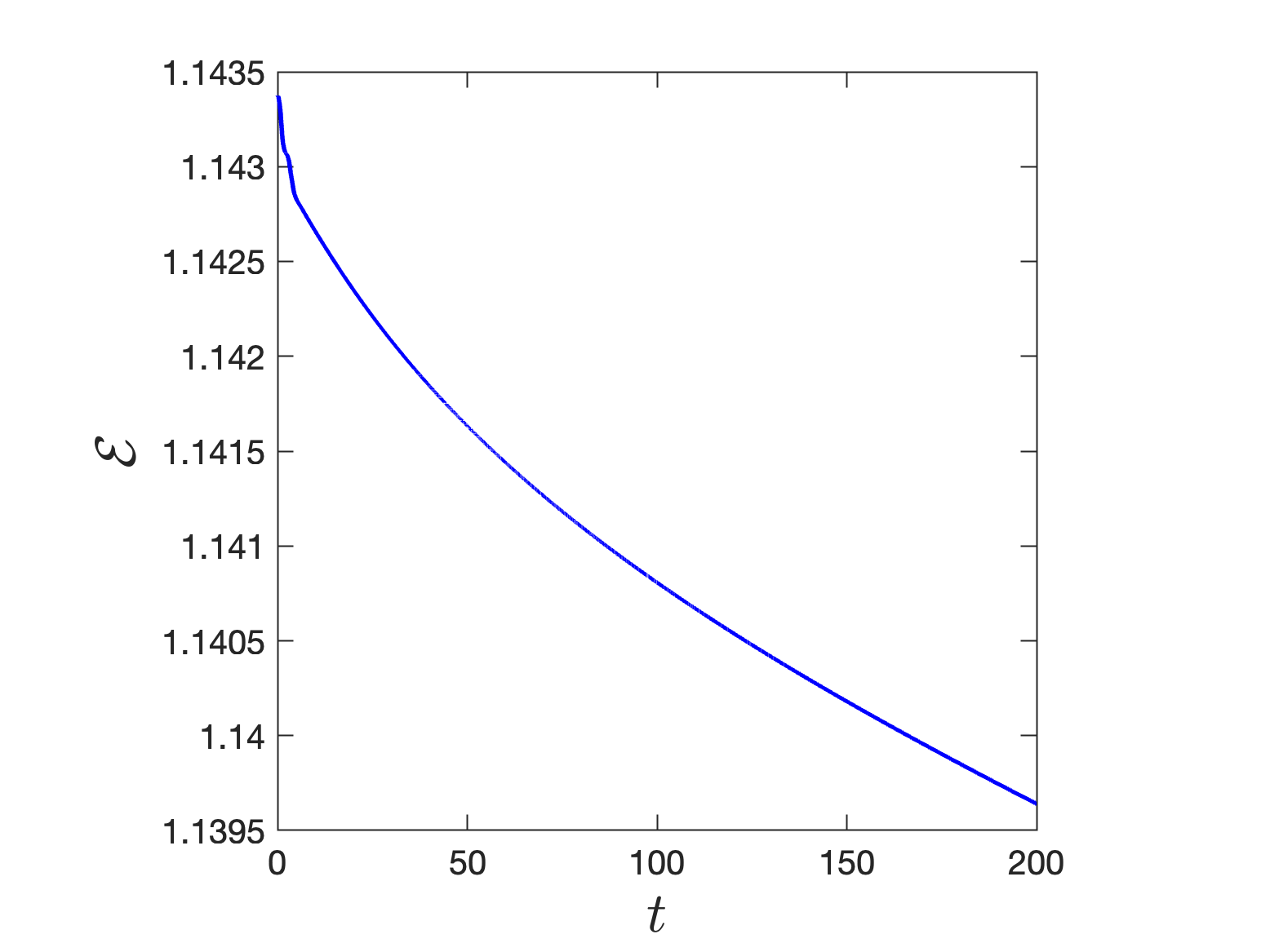} \hspace{-20pt}
		\includegraphics[width=2.5in,trim={0.0cm 0.0cm 1.75cm 0cm},clip]{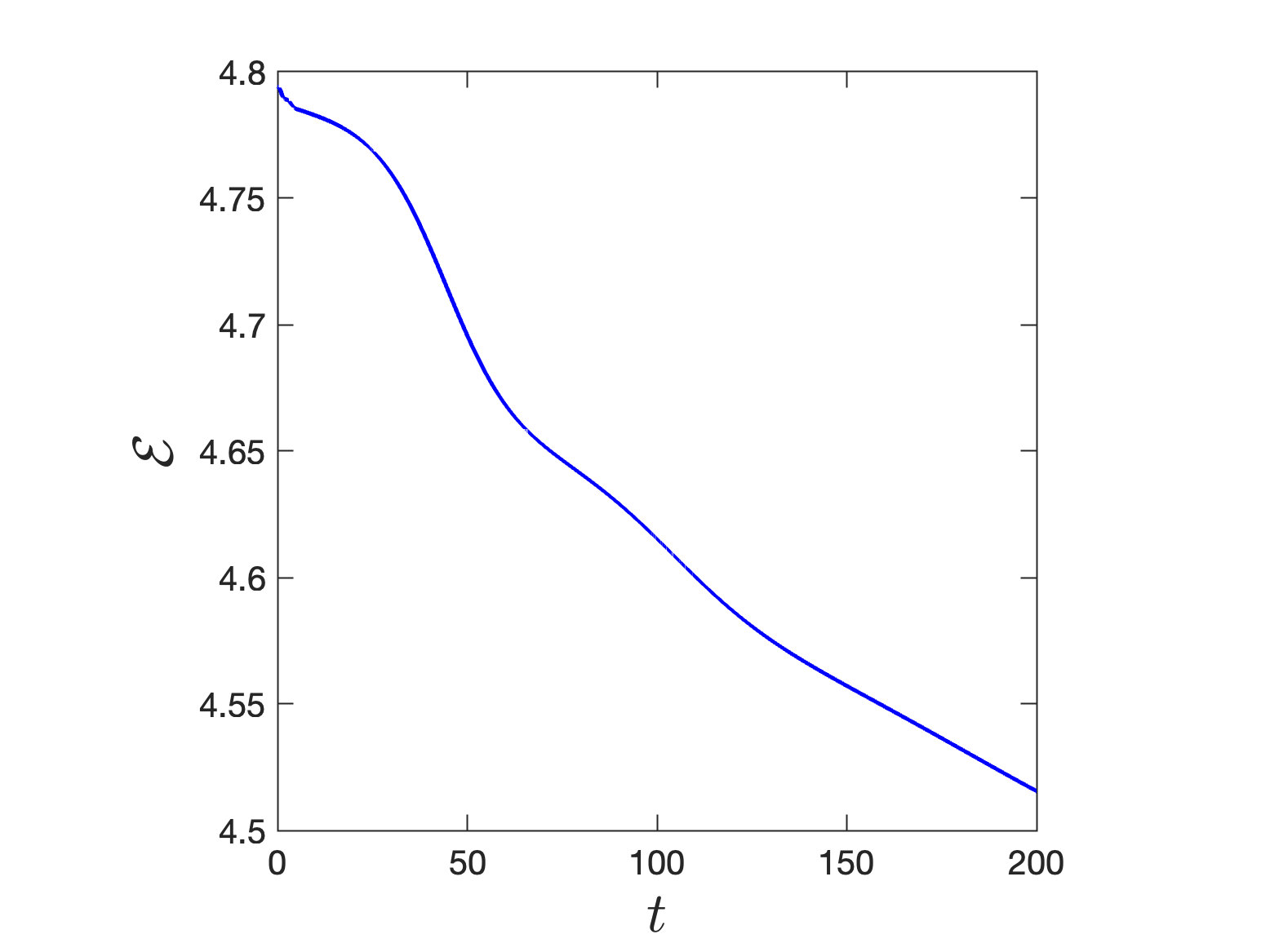}
	\end{adjustbox} 
       \caption{Evolution of the energy $\mathcal{E}$ in time. The left and right plots are for respectively $A=1$ and $A=2$.}
       \label{fig:energy_1d}
\end{figure}

As mentioned before, 
	the energy $\mathcal{E}$ is conserved when the damping terms are neglected. 
We therefore repeat the last numerical example but with $\gamma=\gamma_v=0$.
The maximal relative error on the energy $|\mathcal{E}(t)-\mathcal{E}(0)|/\mathcal{E}(0)$ over the time 
	interval $[0,200]$ is around $2\times10^{-8}$ for $M=1$ and $5\times10^{-8}$ for $M=2$.  
Note that the energy is underestimated as expected from a dissipative method. 
	
\subsection{Examples in Two Space Dimensions}

We use the manufactured solution 
\begin{equation}
	\begin{aligned}
		\hat{E}_x = \frac{\sin(wx)\cos(wy)\sin(\sqrt{2}wt)}{\sqrt{2}}, \quad \hat{E}_y = -\frac{\cos(wx)\sin(wy)\sin(\sqrt{2}wt)}{\sqrt{2}}, & \\
		\hat{H}_z = \sin(wx)\sin(wy)\cos(\sqrt{2}wt), \quad \hat{P}_x = -a(\hat{E}_x^2+\hat{E}_y^2)\hat{E}_x,  \quad \hat{J}_x = \partial_t \hat{P}_x,& \\
		\hat{P}_y = -a(\hat{E}_x^2+\hat{E}_y^2)\hat{E}_y, \quad \hat{J}_y = \partial_t \hat{P}_y, \quad \hat{Q} = \hat{E}_x^2+\hat{E}_y^2,  \quad \hat{S} = \partial_t \hat{Q}.& 
	\end{aligned}
\end{equation}
Here $w= 10\pi$ and there are source terms in the ODEs for $\hat{J}_x$, 
	$\hat{J}_y$ and $\hat{S}$ in \eqref{eq:ODE_system_2D}. 
All the physical parameters are one, 
	except for $a = 1/3$, 
	$\theta = 1/2$,
	$\gamma = 1/20$ and $\gamma_v=1/20$. 
The domain is $\Omega = [0,1]\times[0,1]$ and the time interval is $I=[0,10]$.
We set $h=\Delta x = \Delta y$,
	$\Delta t = h/2$ and $N_{\tau} = N_\tau^*$, 
	and consider $1\leq m \leq 4$.
The left plot of Figure~\ref{fig:conv_2d} illustrates that we obtain a $2m+1$ rates of convergence,
    as expected.

As a second numerical example, 
	we consider the manufactured solution 
\begin{equation}
	\begin{aligned}
		\hat{E}_x =&\,\,  \hat{H}_z = (\eta_x+\eta_y) e^{-\frac{\eta_x^2+\eta_y^2}{\sigma^2}}=-\hat{E}_y, \\
		\hat{P}_x =&\,\, - a (\hat{E}_x^2 + \hat{E}_y^2) E_x, \quad \hat{P}_y = - a (\hat{E}_x^2 + \hat{E}_y^2) \hat{E}_y, \quad \hat{Q} = \hat{E}_x^2 + \hat{E}_y^2, \\
		\hat{J}_x =&\,\, -\hat{J}_y = 12 a (\eta_x+\eta_y)^2 e^{-\frac{3(\eta_x^2 + \eta_y^2)}{\sigma^2}} \bigg(\frac{(\eta_x+\eta_y)^2}{\sigma^2} - 1\bigg), \\
		\hat{S} =&\,\, 8 (\eta_x+\eta_y)  e^{-\frac{2(\eta_x^2 + \eta_y^2)}{\sigma^2}} \bigg(1 - \frac{(\eta_x+\eta_y)^2}{\sigma^2}\bigg).
	\end{aligned}
\end{equation}
Here $\sigma = 0.1$, 
	$\eta_x = x+t$ and $\eta_y = y+t$.
The wave packets travel along the direction $(-1,-1)$ at a constant speed.
The domain is $\Omega = [-2,2]\times [-2,2]$ and $I = [0,10]$. 
The physical parameters remain the same as before. 
Note that source terms are required in the ODEs to evolve $\hat{J}_x$, 
	$\hat{J}_y$ and $\hat{S}$ in \eqref{eq:ODE_system_2D}, 
	and the two first equations of Maxwell's equations \eqref{eq:PDE_system_2D}.
We set $\Delta x = \Delta y = h$, 
	$\Delta t = h/2$ and $N_\tau = N_\tau^*$.
The right plot of Figure~\ref{fig:conv_2d} shows a $2m+1$ rates of convergence for $1\leq m \leq 4$.
	
\begin{figure}   
	\centering
	\begin{adjustbox}{max width=1.0\textwidth,center}
		\includegraphics[width=2.5in,trim={0.0cm 0cm 1.75cm 0cm},clip]{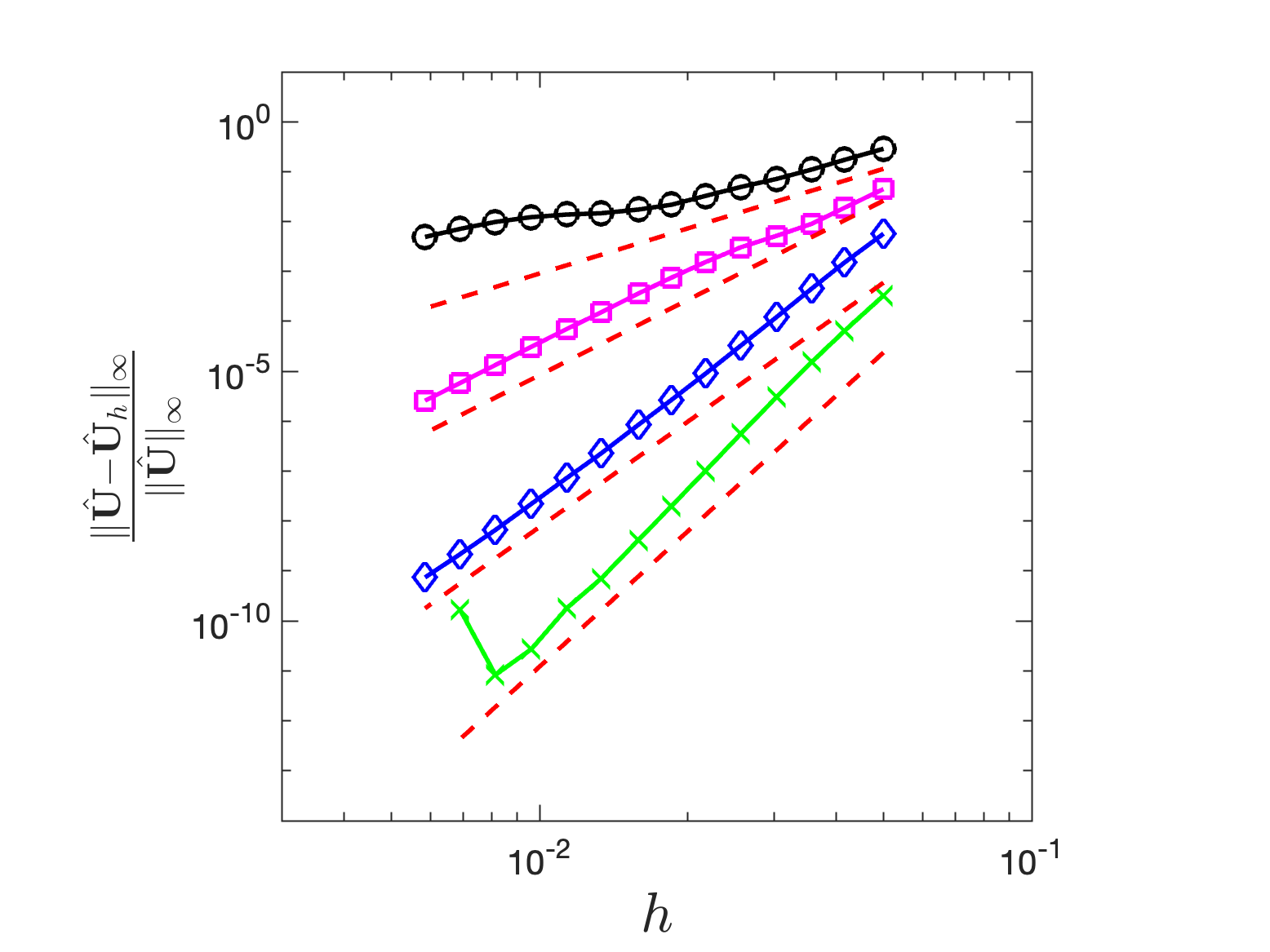} \hspace{-20pt}
		\includegraphics[width=2.5in,trim={0.0cm 0.0cm 1.75cm 0cm},clip]{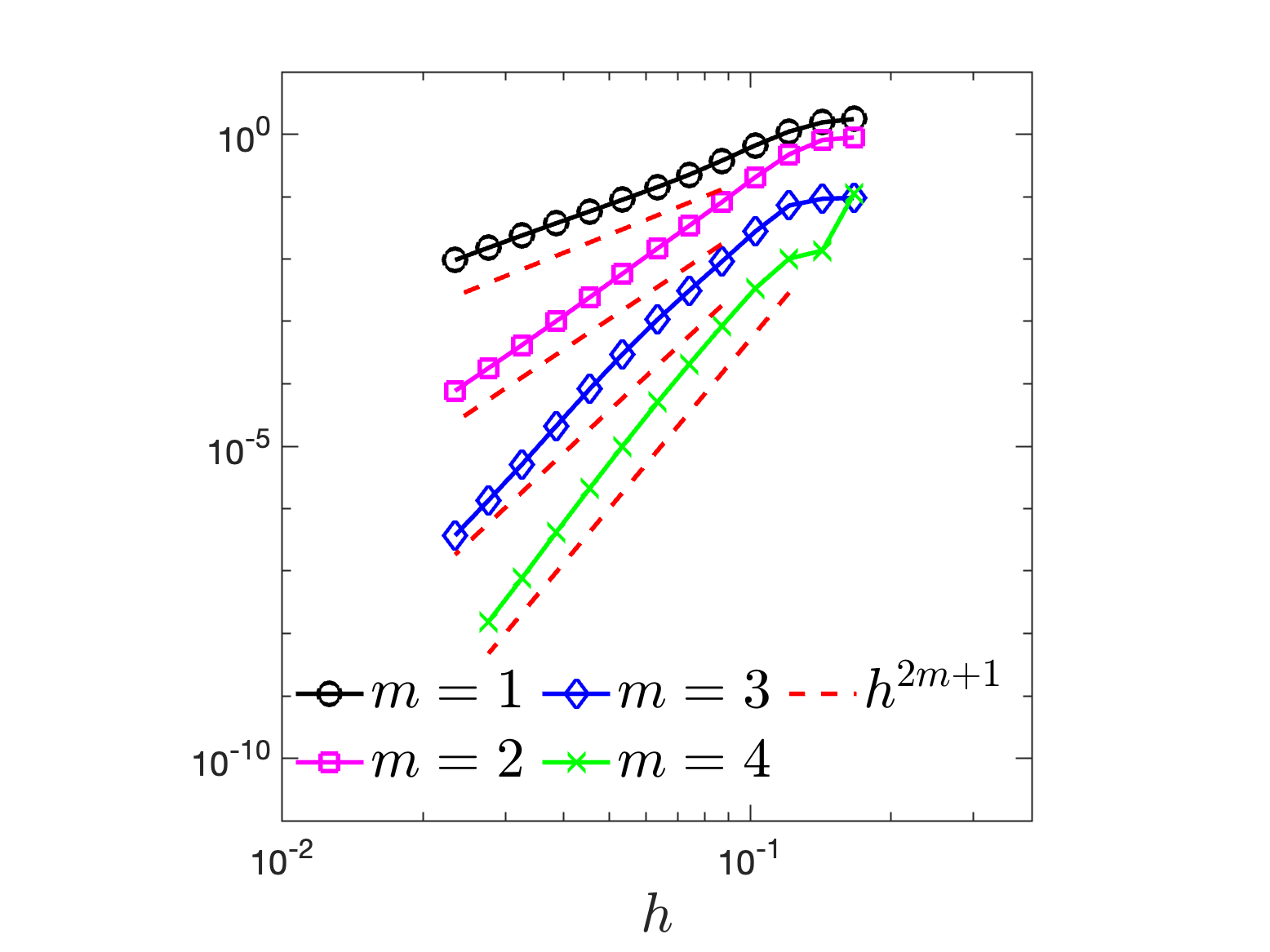}
	\end{adjustbox} 
       \caption{Convergence plots for manufactured solutions problems for $m=1-4$ in two space dimensions. 
       		Here $\hat{\mathbf{U}}$ is a vector containing all variables.}
       \label{fig:conv_2d}
\end{figure}
	
We now repeat the second numerical example but with the $p$-adaptive method using $h=1/25$, 
	$0\leq m \leq 6$ and $N_{\tau_{\max}} = 3$.
The left plot of Figure~\ref{fig:err_vs_nb_dof_2D} illustrates the error in maximum norm as a function of the tolerance $\epsilon_{p_{tol}}$.
The error diminishes as the tolerance decreases until it reaches the maximum accuracy attainable with this mesh when $m=6$ everywhere. 
However, 
	the error remains larger than the considered tolerance, 
	as in the 1-D case. 
The relevance of the $p$-adaptive method is illustrated in the right plot of Figure~\ref{fig:err_vs_nb_dof_2D}, 
	where this method requires a fewer number of DOFs for a given accuracy than the Hermite method with $m$ fixed.
Figure~\ref{fig:values_m_2D} illustrates the values of $m$ as a function of $(x,y)$ at the final time, 
	confirming that the $p$-adaptive method increases the value of $m$ where the pulse is localized. 

\begin{figure}   
	\centering
	\begin{adjustbox}{max width=1.0\textwidth,center}
		\includegraphics[width=2.5in,trim={0.0cm 0cm 1.75cm 0cm},clip]{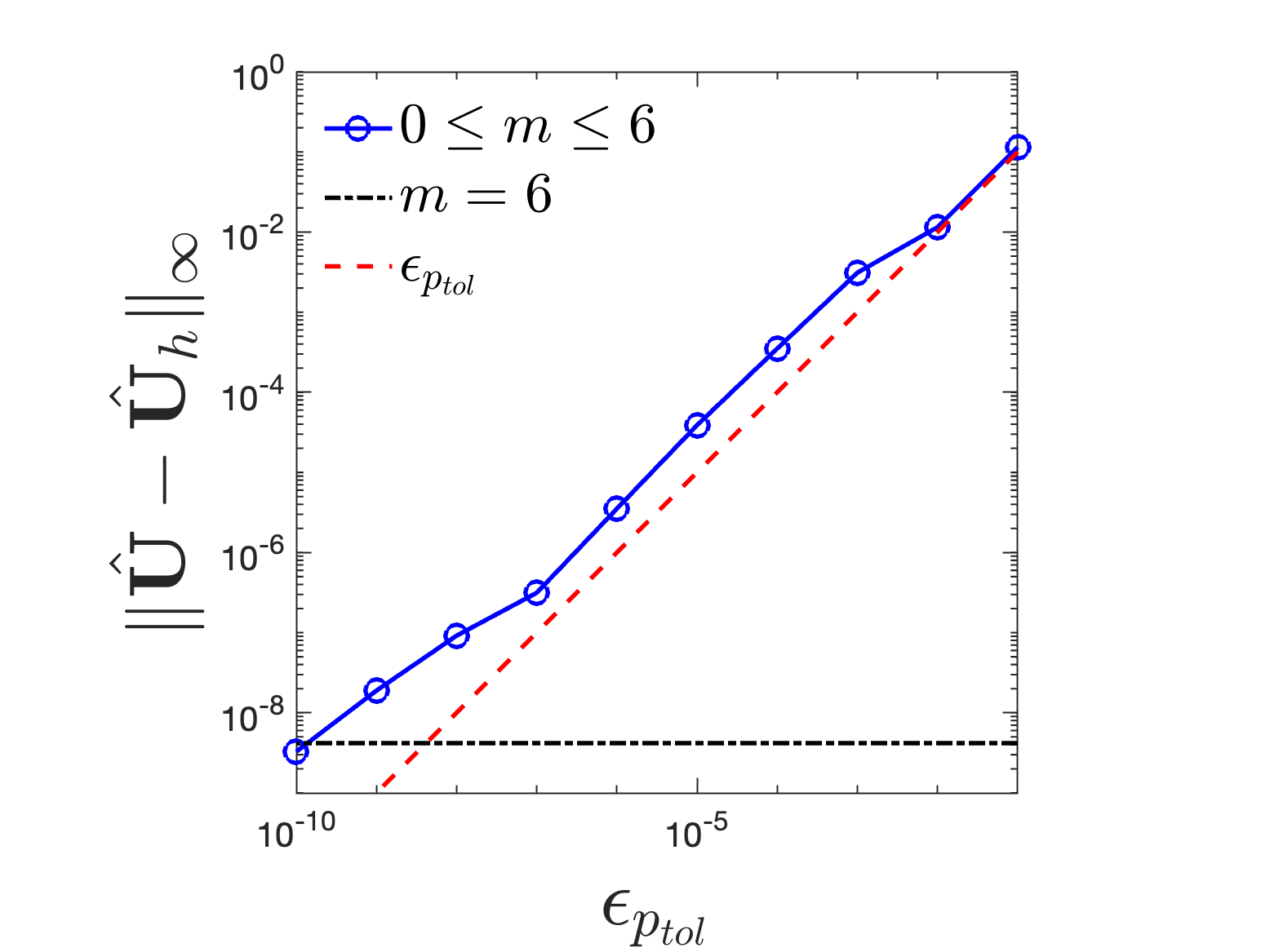} \hspace{-20pt}
		\includegraphics[width=2.5in,trim={0.0cm 0.0cm 1.75cm 0cm},clip]{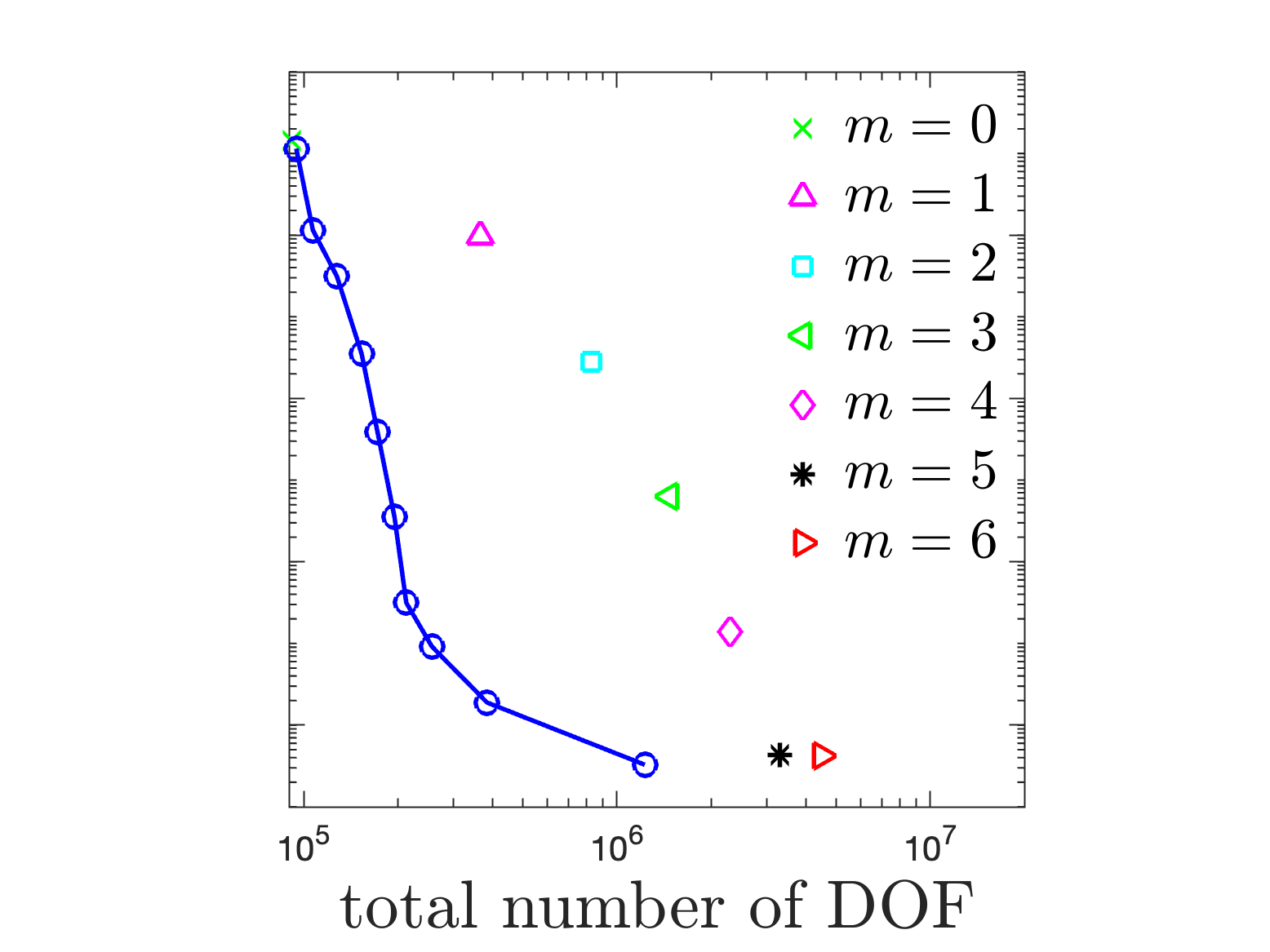}
	\end{adjustbox} 
       \caption{The error in maximum norm as a function of the tolerance (left plot) and the number of degrees of freedom (right plot).}
       \label{fig:err_vs_nb_dof_2D}
\end{figure}

 \begin{figure} 
 \centering
	\begin{adjustbox}{max width=1.0\textwidth,center}
	 \centering
		\includegraphics[width=2.5in,trim={1.5cm 0cm 1.75cm 0cm},clip]{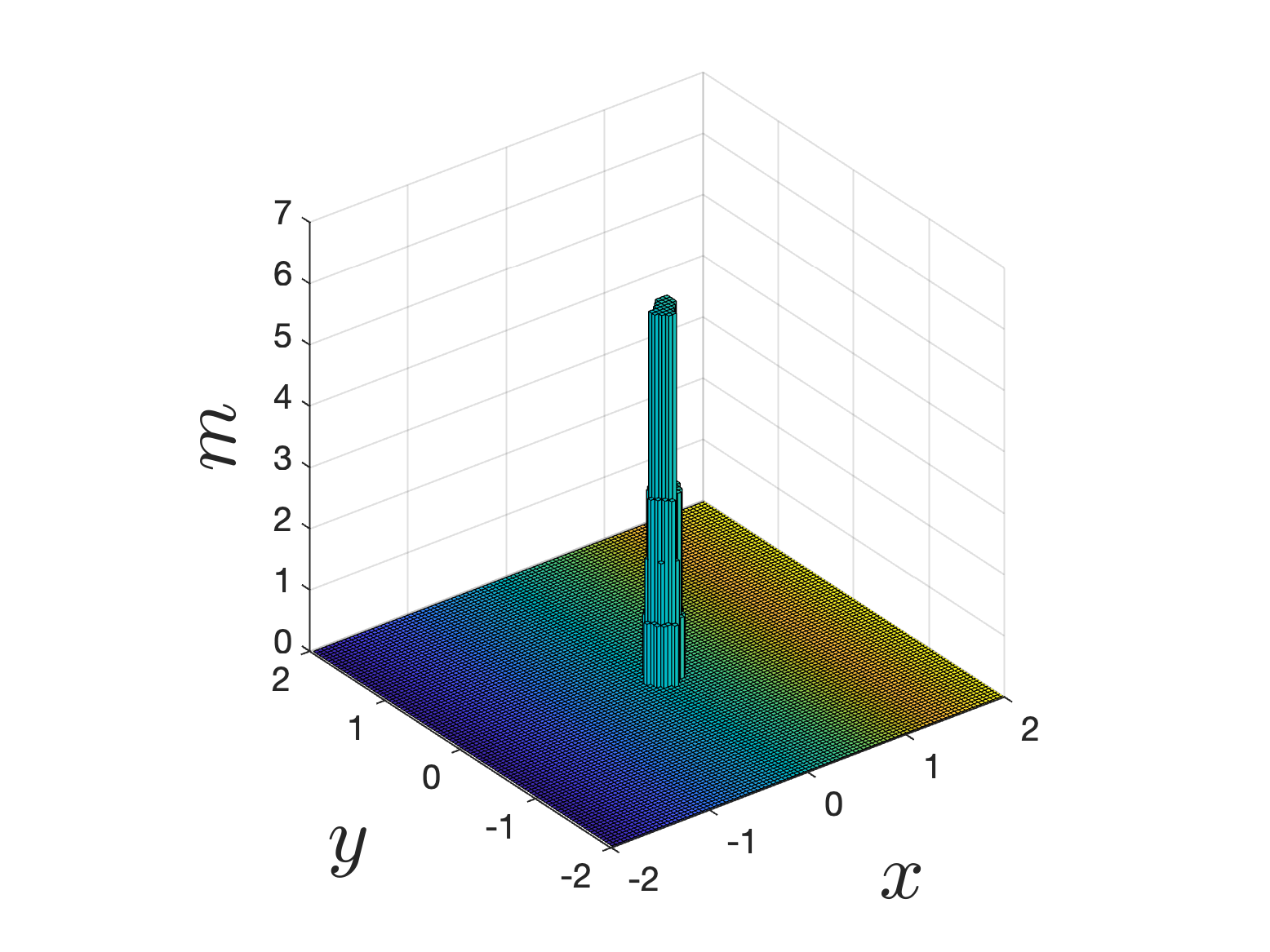}\hspace{-20pt}
	   	\includegraphics[width=2.5in,trim={1.5cm 0cm 1.75cm 0cm},clip]{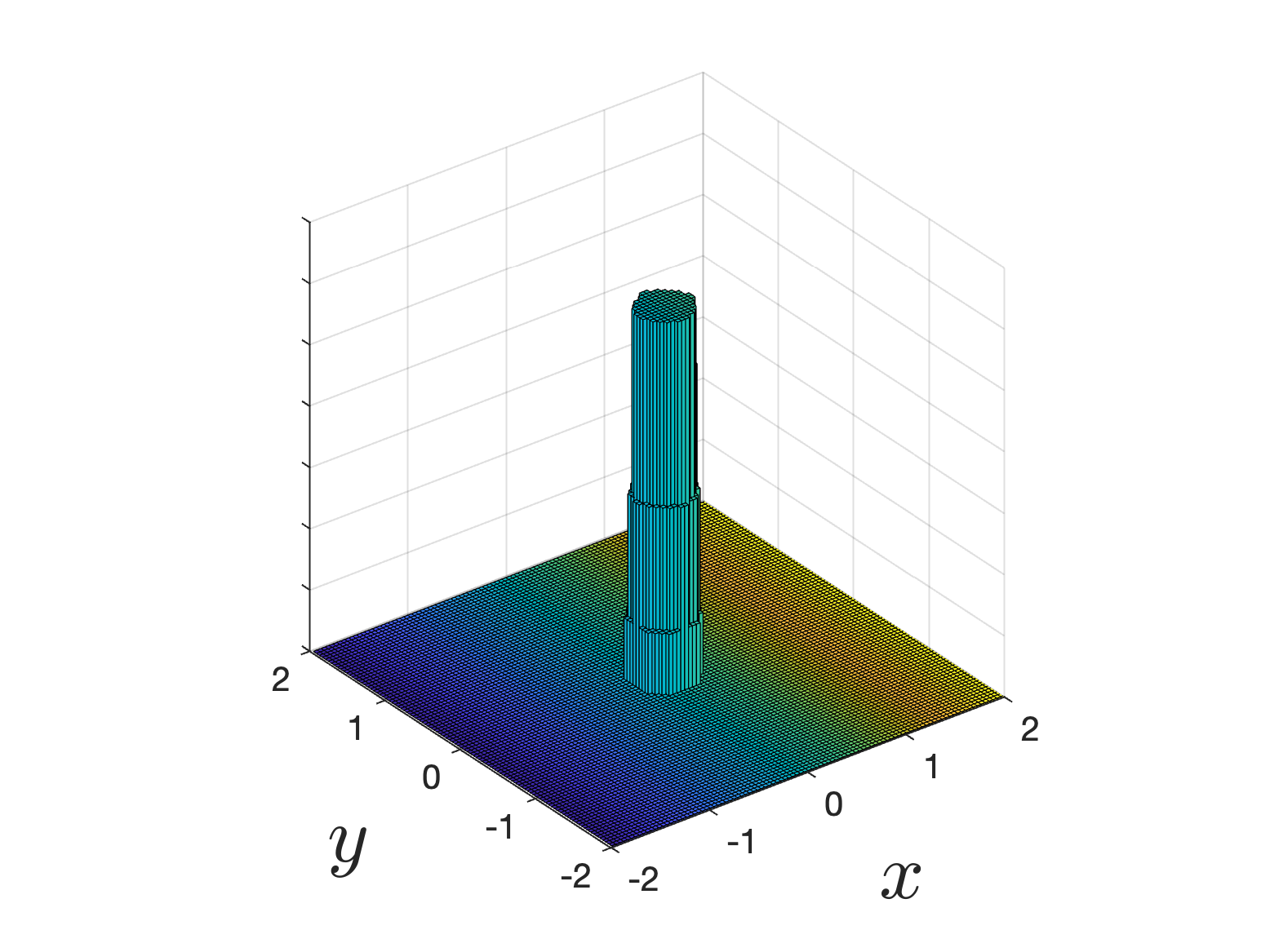}\hspace{-20pt}
	   	\includegraphics[width=2.5in,trim={1.5cm 0cm 1.75cm 0cm},clip]{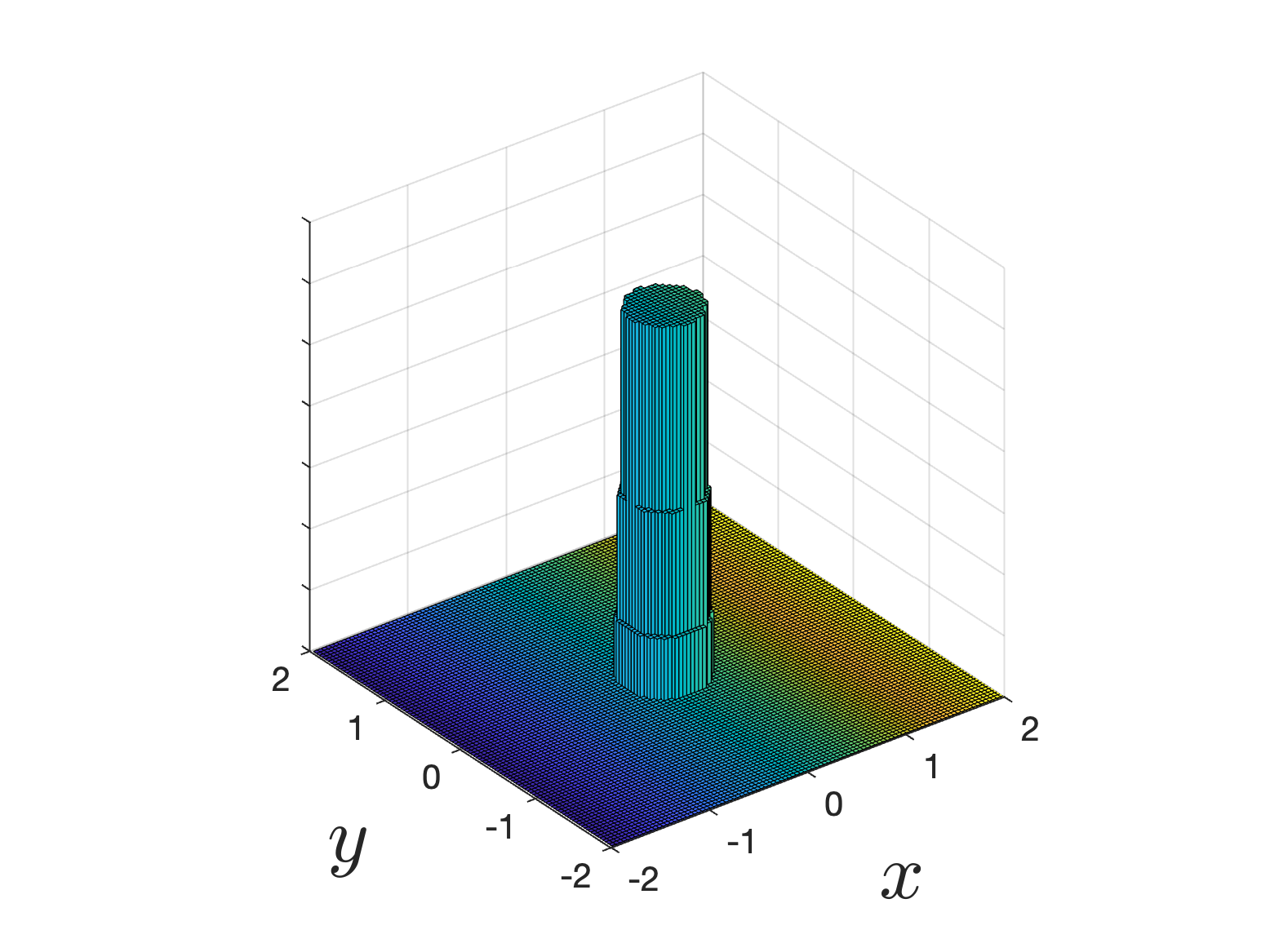} 
	\end{adjustbox}		
  \caption{Values of $m$ as a function of $x$ and $y$ for different values of tolerance $\epsilon_{p_{tol}}$ using $h = 1/25$. 
  		The left, 
			middle and right plots are for the tolerances $10^{-2}$ , 
			$10^{-4}$ and $10^{-6}$.}
   \label{fig:values_m_2D}
\end{figure}

As a last example, 
	we consider an airhole in a nonlinear glass.
The propagation of electromagnetic waves in air is modeled by Maxwell's equations \eqref{eq:Maxwell_3D} with the constitutive law $\hat{\mathbold{D}} = \epsilon \hat{\mathbold{E}}$.
At the interface between the air and the nonlinear dispersive medium, 
	we use a diffusive interface model.
For a circular airhole centered at $(x_c,y_c)$ of radius $r_\Gamma$, 
	we define 
\begin{equation} \label{eq:interface_model}
	\phi_\Gamma(r) = 
    \left\{ 
          \begin{aligned}
              0, &\quad \mbox{if} \quad r\leq r_\Gamma -\delta/2, \\[1pt] 
              1, &\quad \mbox{if} \quad r\geq r_\Gamma + \delta/2, \\[1pt] 
              1-3\tilde{r}^2 + 2\tilde{r}^3, &\quad \mbox{otherwise}. \\[1pt] 
          \end{aligned} \right. 
\end{equation}
Here $r=\sqrt{(x-x_c)^2+(y-y_c)^2}$, 
	$\tilde{r} = 1/2-(r-r_\Gamma)/\delta$
	and the transition function between the air ($\phi_\Gamma =0$) and the nonlinear medium ($\phi_\Gamma =1$) corresponds to a third degree Hermite interpolant 
	satisfying $\phi_\Gamma(r_\Gamma-\delta/2) = 0$,
	$\phi_\Gamma(r_\Gamma+\delta/2) = 1$ and $\phi_\Gamma'(r_\Gamma-\delta/2) = \phi_\Gamma'(r_\Gamma+\delta/2) = 0$. 
The diffusive interface model leads to order reduction issues, 
	so we expect that the $p$-adaptive method uses the maximal value of $m$ in the vicinity of the interface. 
Future work will focus on the enforcement of interface conditions to circumvent this issue.

The domain is $\Omega = [-20,20]^2$ and the time interval is $I = [0,30]$.
Periodic boundary conditions are considered.
The geometry of the airhole is a circle centered at $(5\cos(\pi/4), 5\sin(\pi/4))$ with a radius $r_\Gamma =  0.5$.
The physical parameters in air are set to $\mu = 1$ and $\epsilon =1$ 
	while the parameters in the nonlinear medium are 
	$\mu=1$, 
	$\epsilon = 1$,
	$\epsilon_\infty = 2.25$, 
	$a=0.07$,
	$\omega_0 = 5.84$, 
	$\omega_p = \omega_0\sqrt{5.25-\epsilon_\infty}$,
	$\omega_v = 1.28$, 
	$\gamma = \gamma_v = 0$ and $\theta = 0.3$. 
The incorporation of the diffusive interface model in the method is done by weighting the parameters $a$, 
	$\omega_0$, 
	$\omega_p$ and $\omega_v$ by the function $\phi_\Gamma$, 
	and setting $\epsilon_\infty = 1 + (2.25-1)\phi_\Gamma$. 	

We consider a pulse as the initial condition for the magnetic field, 
	that is 
\begin{equation} \label{eq:cnd_initial_2d}
	\hat{H}_z(x,y,0) = \mbox{sech}(\alpha x)\mbox{sech}(\alpha y)\cos(\alpha\Omega_0 x)\cos(\alpha\Omega_0 y),
\end{equation}
	where $\alpha = 2.5$ and $\Omega_0 = 12.57$, 
	and all the other variables are set to zero. 

We set $h = \Delta x = \Delta y = 1/20$, 
	$\Delta t = h/2$ and $\delta  = 2h$.
We use the $p$-adaptive method with $0\leq m \leq 6$,
	$\epsilon_{p_{tol}} = 10^{-4}$ and $N_{\tau_{\max}} = 3$.
Figure~\ref{fig:pblm_interface} illustrates the evolution of $\|\hat{E}\|_2$, 
	$|\hat{Q}|$ and the distribution of $m$.
\begin{figure}   
	\centering
	\begin{adjustbox}{max width=1.0\textwidth,center}
		\includegraphics[width=2.5in,trim={2.5cm 0cm 1.75cm 0cm},clip]{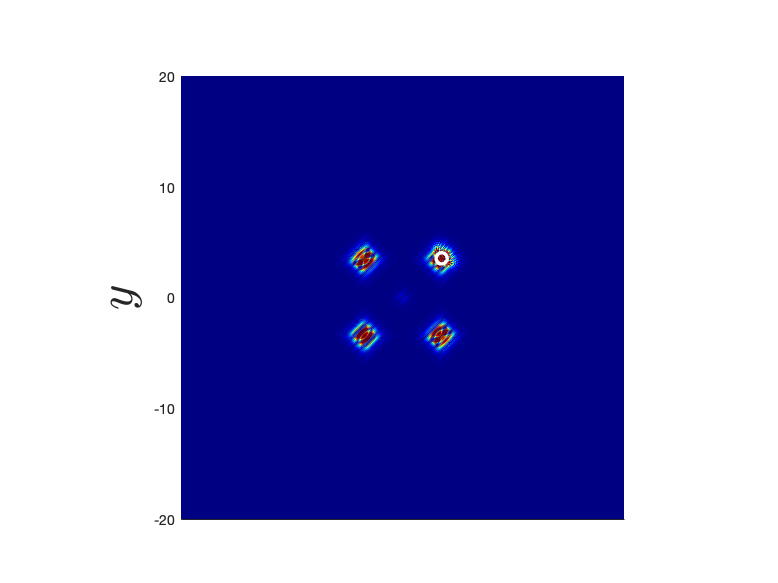}
		\includegraphics[width=2.5in,trim={2.5cm 0cm 1.75cm 0cm},clip]{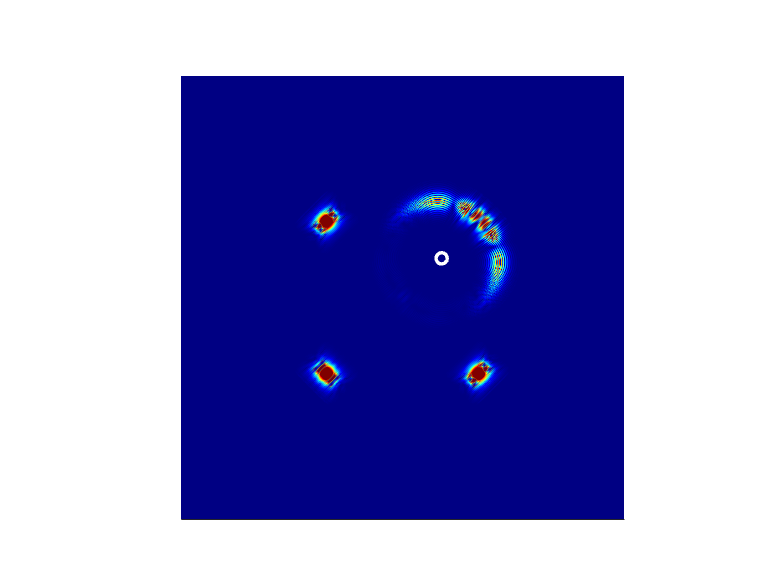}
		\includegraphics[width=2.5in,trim={2.5cm 0cm 1.75cm 0cm},clip]{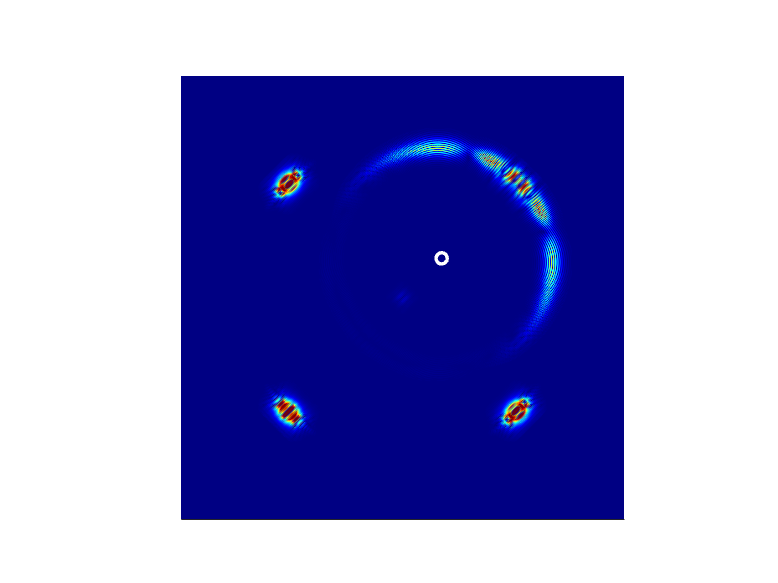}
	   	\includegraphics[width=2.5in,trim={2.5cm 0cm 1.75cm 0cm},clip]{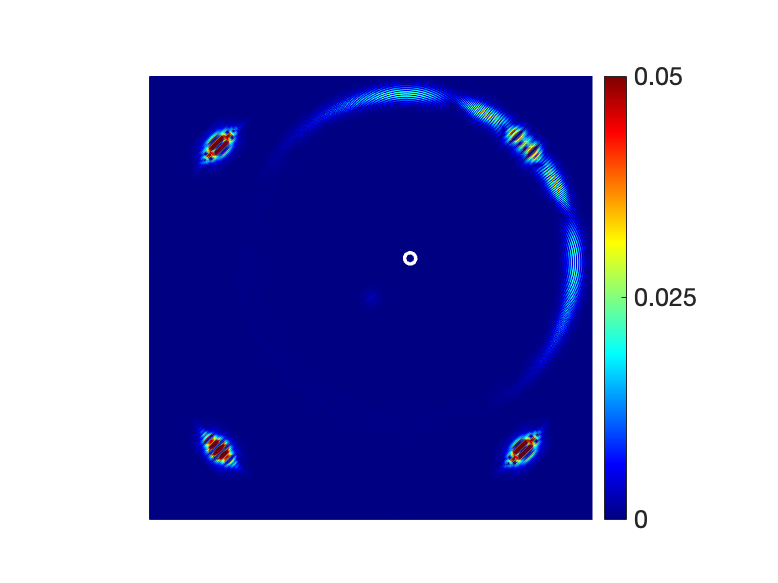}
	\end{adjustbox} 
	\begin{adjustbox}{max width=1.0\textwidth,center}
		\includegraphics[width=2.5in,trim={2.5cm 0cm 1.75cm 0cm},clip]{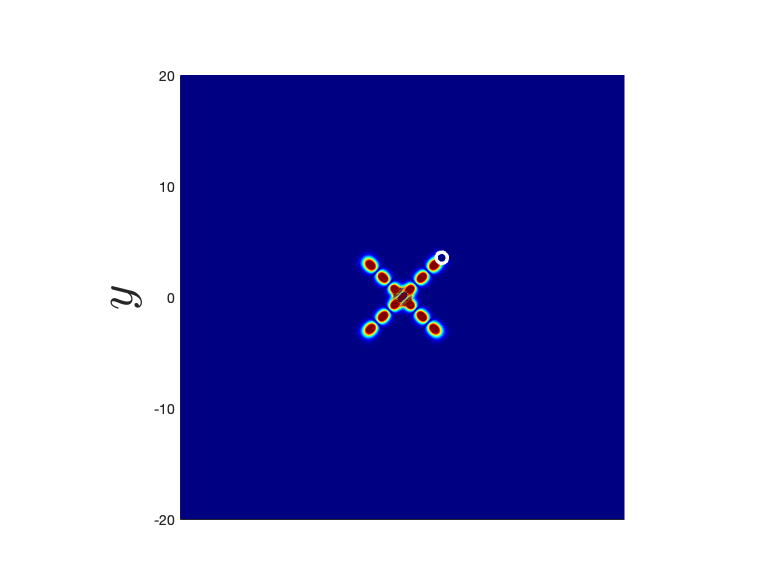}
		\includegraphics[width=2.5in,trim={2.5cm 0cm 1.75cm 0cm},clip]{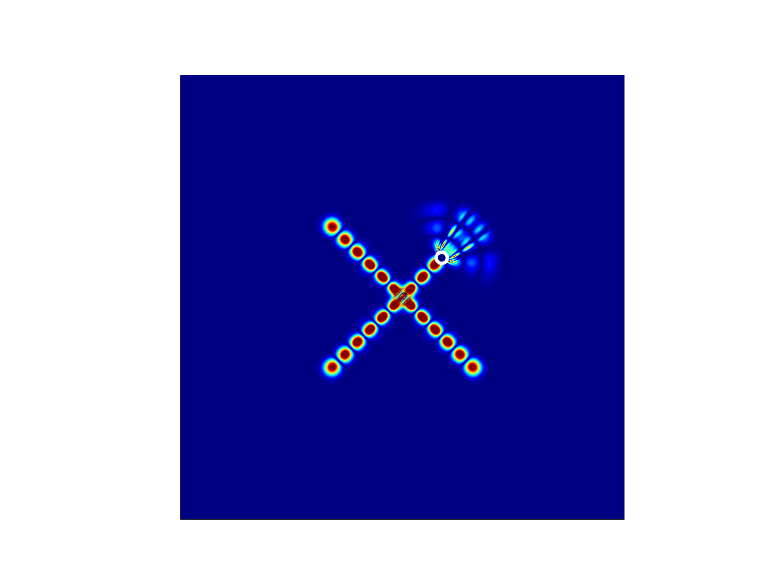}
		\includegraphics[width=2.5in,trim={2.5cm 0cm 1.75cm 0cm},clip]{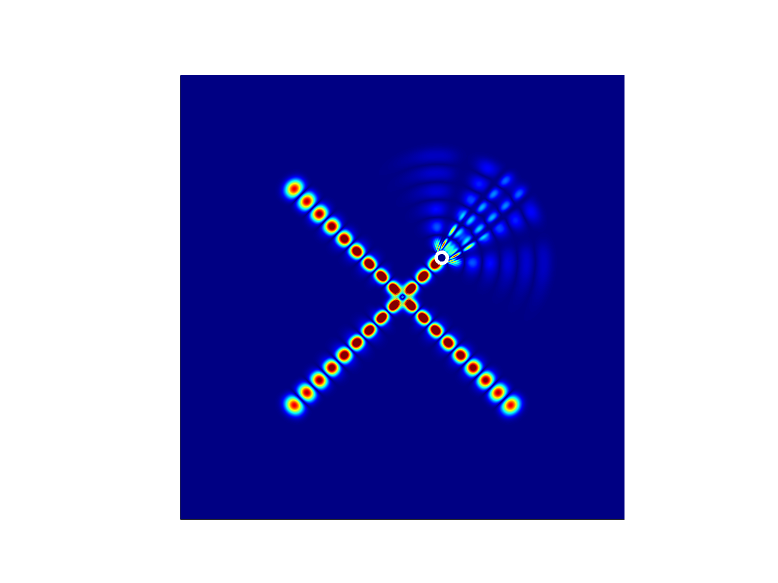}
	   	\includegraphics[width=2.5in,trim={2.5cm 0cm 1.75cm 0cm},clip]{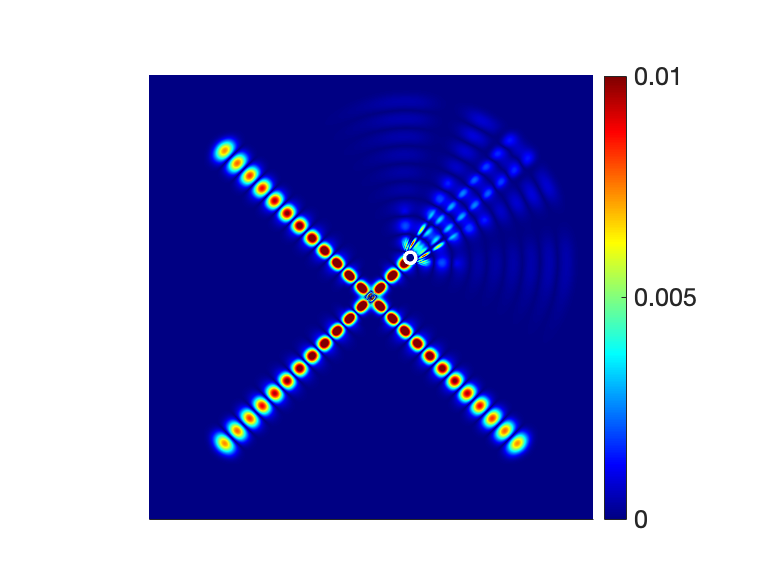}
	\end{adjustbox} 
	\begin{adjustbox}{max width=1.0\textwidth,center}
		\includegraphics[width=2.5in,trim={2.5cm 0cm 1.75cm 0cm},clip]{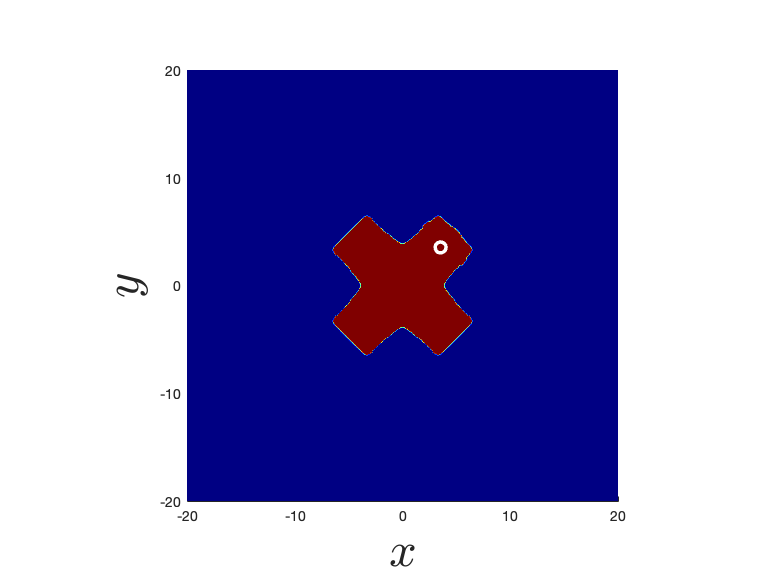}
		\includegraphics[width=2.5in,trim={2.5cm 0cm 1.75cm 0cm},clip]{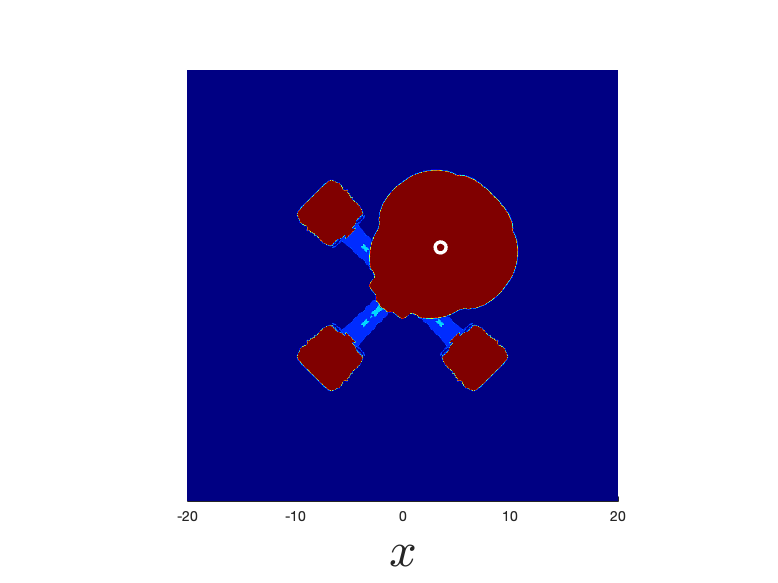}
		\includegraphics[width=2.5in,trim={2.5cm 0cm 1.75cm 0cm},clip]{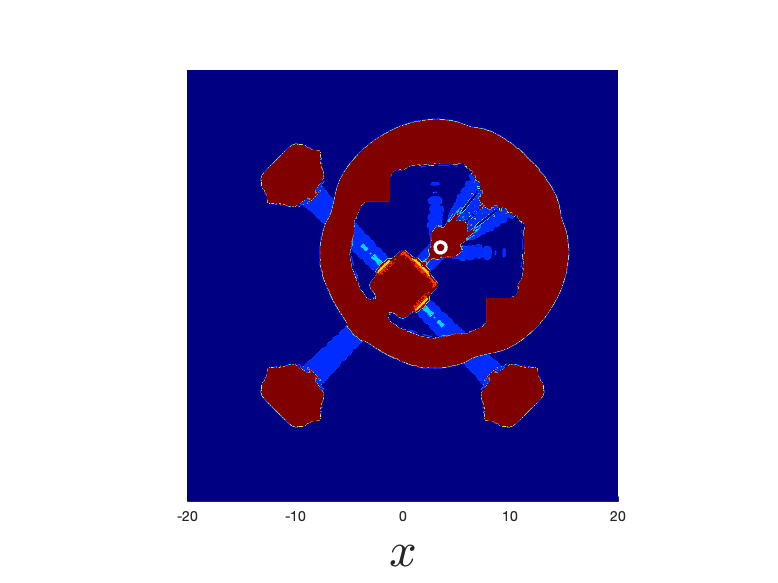}
	   	\includegraphics[width=2.5in,trim={2.5cm 0cm 1.75cm 0cm},clip]{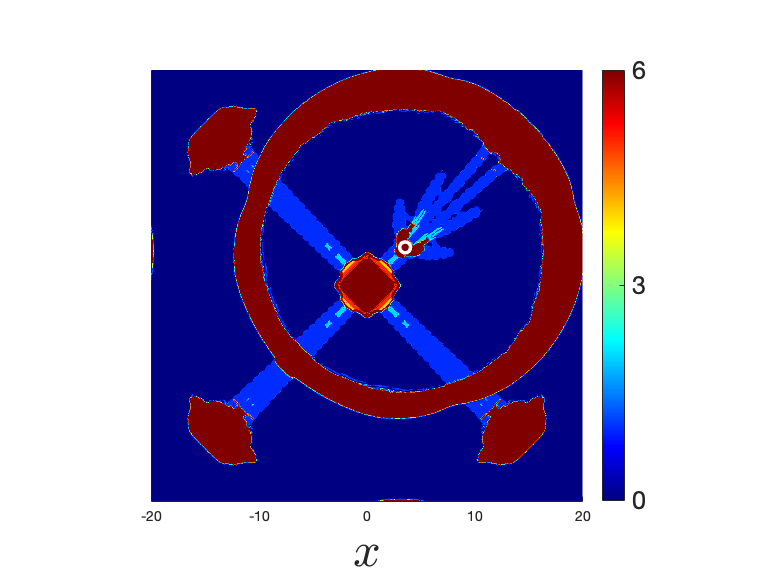}
	\end{adjustbox} 
       \caption{The evolution of $\|\hat{E}\|_2$ (top row), 
       	$|\hat{Q}|$ (middle row) and the distribution of $m$ (bottom row) for the airhole scattering problem using the 
	$p$-adaptive method. 
	From the left to the right, 
		the captions are take at times 7.5, 
		15, 
		22.5 and 30.
	The interface is illustrated by the white circle.}
       \label{fig:pblm_interface}
\end{figure}

The initial pulse for $\hat{H}_z$ leads to four pulses for the electric field,	
	one of which is interacting with the airhole.
The first row of Figure~\ref{fig:pblm_interface} illustrates the evolution of the electric field's norm and 
	we can clearly observe the scattering of the pulse when it reaches the airhole. 
The reaction of the nonlinear medium to the electromagnetic waves is observed in 
	the captions of $|\hat{Q}|$ in the middle row of Figure~\ref{fig:pblm_interface}.
The evolution of the values of $m$ by the $p$-adaptive method is illustrated in the bottom row of Figure~\ref{fig:pblm_interface}.
The method increases the value of $m$ where the pulses are located, 
	as expected.
The maximal value of $m$ is assigned in the vicinity of the interface by the $p$-adaptive algorithm due 
	to the reduction of order introduced by the diffusive interface model.
Nevertheless,
	the proposed $p$-adaptive Hermite method is able to accurately capture local features within the solution.

\section{Conclusion}

In this work, 
	we have proposed a $p$-adaptive Hermite method for simulating the response of a nonlinear dispersive medium
	to electromagnetic fields. 
A novel dissipative Hermite method is presented to handle Maxwell's equations for nonlinear dispersive media.
The Kerr and Raman types of nonlinearity are treated locally within each cell, 
	ultimately,
	requiring only the solution of small linear systems of equations,  
	whose dimension is independent of the order, 
	in each time step. 
The proposed method is able to achieve an arbitrary odd order accuracy in space and can be combined with any one-step method for the time 
	evolution. 
However, 
	the computational cost of the method scales as $\mathcal{O}(m^{2d})$ in $\mathbb{R}^d$. 
Therefore,	
	an order-adaptive algorithm was incorporated in the Hermite method to reduce considerably its computational cost and 
	efficiently capture local structures in the wave propagation. 
Numerical examples confirm the accuracy of the Hermite method (with and without order-adaptivity) and 
	suggest that the method is stable.
The $p$-adaptive Hermite method significantly reduces the computational cost in two space dimensions for a given mesh 
	and targeted accuracy.
The benefit of the proposed method should be 
	even greater in three space dimensions since the number of 
	degree of freedom is $(m+1)^d$ in $\mathbb{R}^d$ for each node and for each variable.

\section*{Declarations}
\section*{Funding} 
This work was supported in part by Grant NSF DMS-2208164, 
	DMS-2210286 and DMS-2309687.  
Any opinions, findings, and conclusions or recommendations expressed in this material are those of the authors and do not necessarily reflect the views of the NSF.
\section*{Conflicts of interest/Competing interests}
On behalf of all authors, 
	the corresponding author states that there is no conflict of interest.
\section*{Availability of data and material}
All data generated or analyzed during this study are included in this article.

%
%

\bibliographystyle{spmpsci}      
\bibliography{references}   


\end{document}